\documentclass[12pt]{amsart}

\usepackage{amsmath}
\usepackage{amsfonts}
\usepackage{fullpage}

\newtheorem{theorem}{Theorem}[section]
\newtheorem{lemma}[theorem]{Lemma}
\newtheorem{proposition}[theorem]{Proposition} 
\newtheorem{corollary}[theorem]{Corollary} 

\theoremstyle{definition}
\newtheorem{definition}[theorem]{Definition}

\theoremstyle{remark}
\newtheorem{remark}[theorem]{Remark}

\numberwithin{equation}{section}


\newcommand{\disk}{\ensuremath{\mathbb{D}} } 
\newcommand{\cdisk}{\ensuremath{\overline{\mathbb{D}}_0}} 

\newcommand{\teich}{Teichm\"uller } 

\newcommand{\id}{\operatorname{id}} 

\newcommand{\riem}{\Sigma}

\newcommand{\tauP}{\tilde{\tau}}

\newcommand{\pmcgi}[1][]{\operatorname{PModI}({#1})} 
\newcommand{\DB}{\operatorname{DB}} 
\newcommand{\PDB}{\mathcal{P}} 

\renewcommand{\Bbb}[1]{\ensuremath{\mathbb{#1}}}
\newcommand{\st}{\, | \,} 

\newcommand{\C}{\mathcal{C}} 
\newcommand{\Oqc}{\mathcal{O}_{\mathrm{qc}}} 


\begin{document}

\title[Fiber Structure and Local Coordinates for Teichm\"uller Space]{Fiber Structure and Local Coordinates for the Teichm\"uller Space of a Bordered Riemann Surface}

\author{David Radnell}
\address{Department of Mathematics and Statistics \\
American University of Sharjah \\
PO BOX 26666, Sharjah, UAE} 
\email[D. ~Radnell]{dradnell@aus.edu}

\author{Eric Schippers}
\address{Department of Mathematics \\
University of Manitoba\\
Winnipeg, MB, R3T 2N2, Canada} 
\email[E.
~Schippers]{eric\_schippers@umanitoba.ca}

\subjclass[2000]{Primary 30F60, 58B12 ; Secondary 81T40}

\date{\today}

\keywords{Teichm\"uller spaces, quasiconformal mappings, sewing,
rigged Riemann surfaces,  conformal field theory}

\begin{abstract}
We show that the infinite-dimensional Teichm\"uller space of a
Riemann surface whose boundary consists of $n$ closed curves is a
holomorphic fiber space over the Teichm\"uller space of
$n$-punctured surfaces. Each fiber is a complex Banach manifold
modeled on a two-dimensional extension of the universal
Teichm\"uller space. The local model of the fiber, together with the
coordinates from internal Schiffer variation, provides new
holomorphic local coordinates for the infinite-dimensional
Teichm\"uller space.
\end{abstract}

\maketitle

\begin{section}{Introduction}
\label{introduction}

\begin{subsection}{Statement of results}

Let $\mathbb{D} = \{ z \in \mathbb{C} \,:\, |z|<1\}$, $\mathbb{D}_0 = \mathbb{D} \setminus \{0\}$, and $\cdisk = \{ z \in \mathbb{C} \,:\, 0<|z|\leq 1\}$.
\begin{definition}  We say that $\riem^B$ is a bordered Riemann
 surface of type $(g,n)$ if 1) its boundary consists of $n$ ordered closed
 curves homeomorphic to $S^1$ and 2) it is biholomorphically equivalent to
 a compact Riemann surface of genus $g$ with $n$ simply-connected
 non-overlapping regions, biholomorphic to $\mathbb{D}$,
 removed.  We say that $\riem^P$ is a punctured Riemann surface of type
 $(g,n)$ if it is biholomorphically equivalent to a compact Riemann
 surface with $n$ points $p_1,\ldots,p_n$ removed.
\end{definition}
Note that it is assumed that bordered Riemann surfaces have no
punctures, and that the boundary components of punctured Riemann surfaces
consist only of single points.   We will denote bordered Riemann
surfaces of type $(g,n)$ with a superscript $B$ and punctured
Riemann surfaces with a superscript $P$.
\begin{remark}
 One can also view a punctured Riemann surface $\riem^P$ as a
 compact Riemann surface with $n$ distinguished points.
\end{remark}

\begin{remark}  Throughout the paper, we consider only Riemann surfaces with no non-trivial
 automorphisms that are homotopic to the identity. That is, we do not consider the special cases where $2g-2+n \leq 0$.
\end{remark}

Given a bordered Riemann surface $\riem^B$ of type $(g,n)$, we can
obtain a punctured Riemann surface $\riem^P$ in the following way.
For details see \cite[Section 3]{RS05}. Denote the boundary
components by $\partial_i \riem^B$.  Let
$\tau=(\tau_1,\ldots,\tau_n)$ where each $\tau_i:\partial \mathbb{D}
\rightarrow
\partial_i \riem^B$ is a fixed quasisymmetric mapping. For the
purposes of this paper, we say that such a $\tau_i$ is
quasisymmetric if $\tau_i$ extends to a quasiconformal map of $\{z:
1<|z|<r\}$ into a doubly-connected neighborhood of $\partial_i
\riem^B$  \cite[Definition 2.12]{RS05}. We sew on $n$ copies of the
punctured unit disc $\cdisk$, denoted $\overline{\mathbb{D}}_{0,i}$,  to $\riem^B$ as follows. Consider the
disjoint union of $\riem^B$ and $\overline{\mathbb{D}}_{0,1}
\sqcup \cdots \sqcup \overline{\mathbb{D}}_{0,n}$.  Identifying boundary points using  $\tau$, the
result is a compact surface $\riem^P$ with $n$ punctures
$p_i$ corresponding to each puncture in $\cdisk$. That is, let
\[  \riem^P = (\riem^B \sqcup \overline{\mathbb{D}}_{0,1}
\cdots \sqcup \overline{\mathbb{D}}_{0,n})/\sim \]
where $p \sim q$ if $p \in \partial_i \riem^B$ and $q \in
\partial \mathbb{D}_i$, and $p=\tau_i(q)$.  By \cite[Theorem 3.3]{RS05} $\riem^P$
has a unique complex structure which is compatible with that of
both $\riem^B$ and $\overline{\mathbb{D}}_{0,i}$ for all $i$. (Note that the
punctures of $\riem^P$ are also ordered.)  If $\riem^P$ is
obtained from $\riem^B$ in this way we will say that $\riem^P$ is
obtained by ``sewing caps on $\riem^B$ via $\tau$'' and we write
$$
\riem^P = \riem^B \#_{\tau} \cdisk^n .
$$
The parametrizations $\tau_i$ can be extended to maps $\tilde{\tau}_i: \cdisk \to \riem^P$ to the caps of $\riem^P$ by
\begin{equation}
\label{eq:tauextension_definition}
\tilde{\tau}_i(x) =
\begin{cases}
\tau_i(x) & \text{for }  x \in \partial \disk \\
x & \text{for } x \in \disk_0 .
\end{cases}
\end{equation}
These maps $\tauP_i$ have quasiconformal extensions to a neighborhood of $\cdisk$.

The aim of this paper is to show that the infinite-dimensional Teichm\"uller space
$T(\riem^B)$ of $\riem^B$ is a holomorphic fiber space over the
finite-dimensional Teichm\"uller space $T(\riem^P)$ of $\riem^P$. The fibers can be
explicitly described as follows.
\begin{definition}[of $\mathcal{O}_{qc}({\riem}_*^P)$] \label{de:Oqcdefinition}
 Let $\Sigma_*^P$ be a compact Riemann surface of type $(g,n)$, with distinguished points
 $p_1,\ldots,p_n$.
 Define $\Oqc(\riem_*^P)$ to be the set of $n$-tuples
 $(\phi_1,\ldots,\phi_n)$
 where $\phi_i:\mathbb{D} \rightarrow \Sigma_*^P$ are maps
 with the following
 properties:
 \begin{enumerate}
  \item $\phi_i(0)=p_i$
  \item $\phi_i$ is conformal on $\mathbb{D}$
  \item $\phi_i$ has a quasiconformal extension to a neighborhood of
   $\overline{\mathbb{D}}$
  \item $\phi_i(\overline{\mathbb{D}}) \cap
  \phi_j(\overline{\mathbb{D}}) = \emptyset$ whenever $i \neq j$.
 \end{enumerate}
\end{definition}
It is convenient to single out the following case.
\begin{definition}
 Let $\Oqc$ denote the set of holomorphic univalent functions $f:\mathbb{D}
 \rightarrow {\mathbb{C}}$
 with quasiconformal extensions to ${\mathbb{C}}$ satisfying
 the normalization $f(0)=0$.
\end{definition}

In \cite{RSOqc} it was shown that $\Oqc$ possesses a natural complex
structure related to that of the universal Teichm\"uller space, and
that $\Oqc(\riem_*^P)$ is a complex Banach manifold which is locally
modeled on $\Oqc^n= \Oqc \times \cdots \times \Oqc$.

Let $\riem^P$ be the punctured Riemann surface obtained from $\riem^B$
by sewing on caps via $\tau$, and let $\riem_*^P$ be a marked
Riemann surface representing an element of the Teichm\"uller space
$T(\riem^P)$ of $\riem^P$. We show that  the fiber in $T(\riem^B)$
over this element of $T(\riem^P)$, modulo the action of Dehn twists
around curves homotopic to boundary curves, is in a natural one-to-one correspondence
with $\Oqc(\riem_*^P)$.

We summarize the main results.  Precise definitions and careful
statements of the theorems can be found in subsequent sections.

\smallskip
\noindent \textbf{Summary of results}
\begin{enumerate}
 \item If $\riem^B$ is a bordered Riemann surface of type $(g,n)$ (with $2g-2+n > 0$)
 then the Teichm\"uller space $T(\riem^B)$ is a complex fiber space
 over $T(\riem^P)$.
 \item Let $F_B([\riem^P,f,\riem_*^P])$ be the fiber over
 $[\riem^P,f,\riem_*^P] \in T(\riem^P)$.
  Let $\DB$ be the subgroup of the mapping class
  group of $\riem^B$ corresponding
  to Dehn twists around curves homotopic to boundary curves.  Then $F_B([\riem^P,f,\riem_*^P])/\DB$ is
  biholomorphic to $\Oqc(\riem_*^P)$.  In particular
  $F_B([\riem^P,f,\riem_*^P])$ is locally biholomorphic to the function space
  $\Oqc^n$.
 \item Schiffer variation coordinates on $T(\riem^P)$ together with open subsets of $\Oqc(\riem_*^P)$ give local holomorphic coordinates for $T(\riem^B)$.
\end{enumerate}

Some recent results are a key part of the formulation and proofs of
these theorems.  First is the authors' construction in \cite{RS05}
of a complex structure on the ``rigged moduli space'' and its
explicit relation to the Teichm\"uller space $T(\riem^B)$.  This
construction uses ideas from two-dimensional conformal field theory
in an essential way.  A further crucial result from \cite{RS05} is
that the operation of sewing two Riemann surfaces with a
quasisymmetric boundary identification is holomorphic in
Teichm\"uller space.  The complex structure on $\Oqc(\riem_*^P)$ was
constructed in \cite{RSOqc}.

A further tool is Gardiner's construction of coordinates on the
Teichm\"uller space of a surface of finite type using Schiffer
variation \cite{Gardiner}.  We use his method in order to construct
a section of the projection of $T(\riem^B)$ onto $T(\riem^P)$. In
the first author's thesis \cite{RadThesis}, Schiffer variation was
used in an analogous way, although for the different purpose of
defining a complex structure on the analytically rigged
Teichm\"uller space.

The results above are an application of Segal's \cite{Segal} formulation of conformal field theory to \teich theory. 
\end{subsection}

\end{section}

\begin{section}{fiber structure of $T(\riem^B)$}

\begin{subsection}{Teichm\"uller spaces}
\label{TeichandMCG}

 We now define the relevant Teichm\"uller spaces, to fix notation.

 Let $\riem^B$ be a bordered Riemann surface of of type $(g,n)$.
 Consider the set of triples
 $\{(\riem^B,f_1,\riem_{1}^B)\}$ where $\riem^B$ is a fixed Riemann
 surface, $\riem_{1}^B$ is another Riemann surface and $f_1:\riem^B
 \rightarrow \riem_{1}^B$ is a quasiconformal map (the ``marking'').
 We say that $(\riem^B,f_1,\riem_{1}^B) \sim (\riem^B,f_2,\riem_{2}^B)$
 if there exists a biholomorphism $\sigma:\riem_{1}^B \rightarrow
 \riem_{2}^B$ such that $f_2^{-1} \circ \sigma \circ f_1$ is
 homotopic to the identity ``rel boundary'', i.e. in such a way that
 the restriction of $f_2^{-1} \circ \sigma \circ f_1$ to the boundary
 is the identity throughout the homotopy.
 \begin{definition}
 The Teichm\"uller space of $\riem^B$ is
 \[  T(\riem^B) = \{ (\riem^B,f_1,\riem_{1}^B)\}/\sim.  \]
 \end{definition}
 We denote the equivalence classes by $[\riem^B,f_1,\riem_{1}^B]$.
 The case $T(\mathbb{D}^*)$, where $\mathbb{D}^* = \{z \,:\,
 |z|>1 \} \cup \{\infty\}$, is the universal Teichm\"uller space.

 It is well known that $T(\riem^B)$ is a complex Banach manifold with complex structure
 compatible with the space $L^\infty_{-1,1}(\riem^B)$ of Beltrami differentials
 $\mu$ on $\riem^B$ in the following sense.  Let
 $L^\infty_{-1,1}(\riem^B)_1$ denote the unit ball in the space of
 Beltrami differentials.  Let
 \[  \Phi:L^\infty_{-1,1}(\riem^B)_1 \rightarrow T(\riem^B) \]
 be the fundamental projection, given by taking a Beltrami
 differential $\mu$ to $[\riem^B,f^\mu,\riem^{\mu}]$ where $f^\mu$
 is a quasiconformal map of dilatation $\mu$.  $\Phi$ is well-defined
 and
 \begin{theorem}[\cite{Lehto},\cite{Nag}] \label{th:Phiinducescomplexstructure}
  $\Phi$ is holomorphic and possesses local
  holomorphic sections.
 \end{theorem}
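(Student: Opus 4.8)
The plan is to derive this from the classical analytic theory of Teichm\"uller spaces: the Ahlfors--Bers theorem on the holomorphic dependence of solutions of the Beltrami equation on the dilatation, the Bers embedding, and the Ahlfors--Weill section. First I would uniformize $\riem^B$ as $\mathbb{D}/\Gamma$ for a torsion-free Fuchsian group $\Gamma$ acting on $\hat{\mathbb{C}}$ and preserving both $\mathbb{D}$ and the exterior disc $\mathbb{D}^{*} = \{ z : |z|>1 \} \cup \{\infty\}$; since $\riem^B$ has boundary, $\Gamma$ is of the second kind and $\mathbb{D}^{*}/\Gamma$ is the reflected bordered surface (for the universal Teichm\"uller space $T(\mathbb{D}^{*})$ the group $\Gamma$ is trivial). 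A Beltrami differential $\mu \in L^\infty_{-1,1}(\riem^B)_1$ lifts to a $\Gamma$-invariant $\tilde\mu$ on $\mathbb{D}$, and extending $\tilde\mu$ by zero on $\mathbb{D}^{*}$ yields a $\Gamma$-invariant $\hat\mu \in L^\infty(\hat{\mathbb{C}})_1$, with $\mu \mapsto \hat\mu$ a bounded complex-linear injection (equivariance of the extension uses that $\Gamma$ preserves $\mathbb{D}^{*}$). Letting $w^{\hat\mu}$ denote the normalized quasiconformal self-map of $\hat{\mathbb{C}}$ with dilatation $\hat\mu$ fixing $0,1,\infty$, the conjugate $w^{\hat\mu}\,\Gamma\,(w^{\hat\mu})^{-1}$ is again discrete and $w^{\hat\mu}|_{\mathbb{D}}$ descends to a quasiconformal map of dilatation $\mu$, recovering the projection $\Phi$ (whose well-definedness is already noted above).

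For the holomorphy of $\Phi$ I would invoke the Ahlfors--Bers theorem: $\hat\mu \mapsto w^{\hat\mu}$ depends holomorphically on $\hat\mu$, and in particular the restriction $w^{\hat\mu}|_{\mathbb{D}^{*}}$ is conformal and its Schwarzian derivative $\mathcal{S}(w^{\hat\mu}|_{\mathbb{D}^{*}})$ is a bounded quadratic differential for $\Gamma$ on $\mathbb{D}^{*}$ depending holomorphically on $\hat\mu$ --- hence, composing with the bounded linear map $\mu \mapsto \hat\mu$, holomorphically on $\mu$. Since $\mathcal{S}(w^{\hat\mu}|_{\mathbb{D}^{*}})$ is precisely the image of $\Phi(\mu)$ under the Bers embedding, and the Bers embedding is a holomorphic chart for $T(\riem^B)$, it follows that $\Phi$ is holomorphic.

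For the local holomorphic sections, over a neighbourhood of the base point I would use the Ahlfors--Weill section: for a quadratic differential $\varphi$ in the image of the Bers embedding with sufficiently small hyperbolic sup-norm, the Ahlfors--Weill formula produces a $\Gamma$-invariant Beltrami differential on $\mathbb{D}$ which descends to a Beltrami differential $\mu_\varphi$ on $\riem^B$ with $\Phi(\mu_\varphi)$ having Bers coordinate $\varphi$; since $\varphi \mapsto \mu_\varphi$ is complex-linear, it is a holomorphic section of $\Phi$ over a neighbourhood of the base point. For a general point $p = [\riem^B, g, \riem_1^B] \in T(\riem^B)$ I would change base point: composition with $g$ induces a biholomorphism $T(\riem^B) \to T(\riem_1^B)$ which lifts, via the composition formula for complex dilatations, to a biholomorphism $L^\infty_{-1,1}(\riem_1^B)_1 \to L^\infty_{-1,1}(\riem^B)_1$ intertwining the two fundamental projections; composing the Ahlfors--Weill section at the base point of $T(\riem_1^B)$ with these maps produces a holomorphic section of $\Phi$ over a neighbourhood of $p$.

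The only genuinely substantive ingredient is the Ahlfors--Bers theorem, which underpins both the holomorphy of $\Phi$ and the construction of the Bers embedding chart; everything else is bookkeeping. The points requiring the most care are making the extension-by-zero, the Bers embedding, and the Ahlfors--Weill section equivariant for a Fuchsian group of the second kind, and matching the ``rel boundary'' equivalence on $\riem^B$ with the standard identification, on $\hat{\mathbb{C}}$, of quasiconformal maps that agree on the limit set of $\Gamma$.
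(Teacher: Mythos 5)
The paper does not prove this theorem; it is quoted as a known result from Lehto and Nag, and your proposal reconstructs exactly the classical argument found in those references: uniformize by a Fuchsian group of the second kind, lift and extend the Beltrami differential by zero, apply Ahlfors--Bers to get holomorphic dependence of $w^{\hat\mu}$ on $\hat\mu$, read off the Bers embedding from the Schwarzian of the conformal restriction to $\mathbb{D}^*$, produce a local section near the base point via Ahlfors--Weill, and transport it by change of base point. That is the correct route, and the structure of your argument is sound.

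One caution on your final remark, though: for a bordered surface with the ``rel boundary'' equivalence used in this paper, the corresponding identification on $\hat{\mathbb{C}}$ is \emph{not} that the normalized quasiconformal maps agree on the limit set of $\Gamma$. Requiring agreement only on the limit set gives a strictly weaker equivalence (roughly, the quotient of $T(\riem^B)$ by boundary twists), which is a different Teichm\"uller space. The correct statement is that $[\mu_1]=[\mu_2]$ in $T(\riem^B)$ iff $w^{\hat\mu_1}=w^{\hat\mu_2}$ on \emph{all} of $\partial\mathbb{D}$, equivalently iff the conformal maps $w^{\hat\mu_i}|_{\mathbb{D}^*}$ coincide; since the Bers embedding records exactly the Schwarzian of that conformal map, it is indeed injective on $T(\riem^B)$ with this equivalence. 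With that correction the proof goes through, and the Ahlfors--Weill section and change-of-base-point formula both respect $\Gamma$-equivariance for groups of the second kind just as for the first kind.
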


 Similarly, we can define the Teichm\"uller space of a punctured
 Riemann surface.  Let $\riem^P$ be a punctured Riemann surface of
 type $(g,n)$.
 Consider the set of triples
 $(\riem^P,f_1,\riem_{1}^P)$ where $\riem^P$ is a fixed punctured Riemann
 surface, and $f_1: \riem^P \rightarrow \riem_{1}^P$ is a
 quasiconformal map onto the Riemann surface $\riem_{1}^P$.  We say
 that $(\riem^P,f_1,\riem_{1}^P) \sim (\riem^P,f_2,\riem_{2}^P)$ if
 there is a biholomorphism $\sigma:\riem_{1}^P \rightarrow
 \riem_{2}^P$ such that $f_2^{-1} \circ \sigma \circ f_1$ is
 homotopic  to the identity.
 \begin{definition}
  The Teichm\"uller space of $\riem^P$
 is  defined by
 \[ T(\riem^P) = \{ (\riem^P,f_1,\riem_{1}^P)\} /\sim.  \]
 \end{definition}
 We denote the equivalence classes by $[\riem^P,f_1,\riem_{1}^P]$.
 The Teichm\"uller space $T(\riem^P)$ is a complex manifold of dimension $3g-3 +n$.
 \begin{remark}
  Observing that quasiconformal and holomorphic maps must extend to
  the punctures, one obtains an alternate description of punctured
  Riemann surfaces and their Teichm\"uller spaces.
  One regards a punctured Riemann
  surface as a compact surface with distinguished points, and the
  definition of $T(\riem^P)$ is altered to require that the
  quasiconformal mappings take punctures to punctures, and the
  homotopy is ``rel punctures'', that is it preserves the punctures
  throughout.
 \end{remark}
 \begin{remark}
  Of course, the Teichm\"uller spaces of punctured
  and bordered Riemann surfaces are special cases of the
  general definition of the Teichm\"uller space of any Riemann surface
  covered by the disc.
 \end{remark}

As in \cite[section 2.1]{RS05} we introduce a certain subgroup of
the mapping class group. The pure mapping class group of $\riem^B$
is the group of homotopy classes of quasiconformal self-mappings of
$\riem^B$ which preserve the ordering of the boundary components.
Let $\pmcgi[\riem^B]$ be the subgroup of the mapping class group
consisting of equivalence classes of mappings that are the identity
on the boundary  $\partial \riem^B$. The group $\pmcgi[\riem^B]$ is
finitely generated by Dehn twists.
\begin{definition}
Let $\DB(\riem^B)$ be the subgroup of $\pmcgi[\riem^B]$ generated by the equivalence classes of mappings which are Dehn twists around curves that are homotopic to boundary curves. We write $\DB$ when the surface is clear from context.
\end{definition}
If $\riem^B$ is not an annulus or a disk then $\DB(\riem^B)$ is isomorphic to $\mathbb{Z}^n$ and is in the center of  $\pmcgi[\riem^B]$.

The mapping class group acts on $T(\riem^B)$ by $[\rho] \cdot [\riem^B,f,\riem_1^B] = [\riem^B, f \circ \rho, \riem_1^B]$.
\begin{proposition}[{\cite[Lemmas 5.1 and 5.2]{RS05}}]
\label{DBaction}
The group $\pmcgi[\riem^B]$, and hence its subgroup $\operatorname{DB}(\riem^B)$, acts properly discontinuously and fixed-point freely by biholomorphisms on $T(\riem^B)$.
\end{proposition}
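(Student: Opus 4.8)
The plan is to establish in turn that each element of $\pmcgi[\riem^B]$ acts biholomorphically, that the action is fixed-point free, and that it is properly discontinuous, the last being the only substantial point; throughout, the standing hypothesis $2g-2+n>0$ is in force. No separate argument is needed for the subgroup $\DB(\riem^B)$, since a subgroup of a group acting freely and properly discontinuously inherits both properties.

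\emph{Biholomorphicity and freeness.} For a representative $\rho$ of a class in $\pmcgi[\riem^B]$, precomposition of markings by $\rho$ lifts through the fundamental projection $\Phi$ to a self-map of the unit ball $L^\infty_{-1,1}(\riem^B)_1$, namely $\mu \mapsto$ (complex dilatation of $f^\mu \circ \rho$); this is given by a M\"obius-type expression in $\mu \circ \rho$ whose coefficients involve the complex dilatation of $\rho$, hence is a biholomorphic automorphism of the ball, and by construction it sends $\Phi(\mu)$ to $[\rho]\cdot\Phi(\mu)$. Since $\Phi$ is holomorphic and admits local holomorphic sections (Theorem~\ref{th:Phiinducescomplexstructure}), the induced map $[\rho]\cdot(-)$ on $T(\riem^B)$ is holomorphic; applying the same to $[\rho]^{-1}$ shows it is biholomorphic, and one checks routinely that the assignment depends only on the class $[\rho]$ and is a left action. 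For freeness, suppose $[\rho]\cdot[\riem^B,f,\riem_1^B]=[\riem^B,f,\riem_1^B]$ with $[\rho]\in\pmcgi[\riem^B]$. Then there is a biholomorphism $\sigma\colon\riem_1^B\to\riem_1^B$ with $f^{-1}\circ\sigma\circ f\circ\rho$ homotopic to $\id$ rel $\partial\riem^B$; restricting to the boundary and taking $\rho$ to be the identity there, we get $\sigma=\id$ on $f(\partial\riem^B)=\partial\riem_1^B$. A biholomorphism of a connected bordered Riemann surface fixing a boundary curve pointwise extends across that analytic curve by reflection and equals the identity by the identity theorem, so $\sigma=\id$ and hence $\rho$ is homotopic to $\id$ rel $\partial\riem^B$, i.e. $[\rho]=1$.

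\emph{Reduction of proper discontinuity.} The group acts by isometries of the Teichm\"uller metric $d_T$: this distance is one half the logarithm of the least dilatation of a quasiconformal map homotopic rel boundary to $f_2\circ f_1^{-1}$ between the underlying surfaces, and precomposing both markings by $\rho$ leaves $f_2\circ f_1^{-1}$, hence this quantity, unchanged. Moreover, for the base point $p_0=[\riem^B,\id,\riem^B]$, the distance $d_T(p_0,[\rho]\cdot p_0)$ is one half the logarithm of the least dilatation among self-maps of $\riem^B$ homotopic to $\rho$ rel $\partial\riem^B$. Now if $K\subset T(\riem^B)$ is compact it lies in some ball $\{x:d_T(p_0,x)\le R\}$, and a short application of the triangle inequality and the isometry property shows that $[\rho]\cdot K\cap K\neq\emptyset$ forces $d_T(p_0,[\rho]\cdot p_0)\le 4R$. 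Thus everything reduces to the statement: \emph{for each $K\ge 1$ there are only finitely many $[\rho]\in\pmcgi[\riem^B]$ for which the homotopy class of $\rho$ rel $\partial\riem^B$ contains a $K$-quasiconformal self-map of $\riem^B$.}

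\emph{The finiteness statement and the main obstacle.} To prove it, fix a hyperbolic metric of finite area on $\riem^B$ (for instance the one induced from a reflection-symmetric hyperbolic metric on the double, for which $\partial\riem^B$ is geodesic), and a finite system $c_1,\dots,c_N$ of disjoint essential simple closed curves and essential simple arcs with endpoints on $\partial\riem^B$ which \emph{fills} $\riem^B$, so that, by the Alexander method for surfaces of finite type, a self-homeomorphism of $\riem^B$ equal to the identity on $\partial\riem^B$ is determined up to homotopy rel $\partial\riem^B$ by the homotopy classes (rel their fixed endpoints) of the images of the $c_i$. If $\psi$ is $K$-quasiconformal and the identity on $\partial\riem^B$, then by the quasi-invariance of extremal length each $\psi(c_i)$ lies in a homotopy class of extremal length at most $K$ times that of $c_i$; combined with a lower bound of extremal length by a constant times the square of hyperbolic length, valid on the fixed surface, this bounds the hyperbolic length of $\psi(c_i)$ in terms of $K$ and the fixed data alone. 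Since a fixed finite-area hyperbolic surface carries only finitely many homotopy classes of simple closed curves, and of simple arcs with prescribed endpoints, of bounded length, the tuple $([\psi(c_1)],\dots,[\psi(c_N)])$ takes only finitely many values, and hence so does $[\psi]$. The step I expect to require the most care is precisely this passage from ``$\psi$ is $K$-quasiconformal'' to a bound on the combinatorial complexity of the images $\psi(c_i)$: identifying the right conformally quasi-invariant quantity for arcs as well as for closed curves relative to $\partial\riem^B$, establishing the finiteness of isotopy classes of bounded length in the presence of boundary, and verifying that the chosen system genuinely fills $\riem^B$ in the sense needed to detect elements of $\pmcgi[\riem^B]$ rather than of the larger mapping class group that moves the boundary.
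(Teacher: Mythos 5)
The paper itself offers no proof of this proposition; it is cited wholesale from Lemmas 5.1 and 5.2 of \cite{RS05}, so a self-contained argument is by default a different route. What you give is the standard Bers-style argument, and it is correct in outline: biholomorphicity from the M\"obius transformation of the Beltrami ball combined with the holomorphicity and local sections of $\Phi$; freeness from the reflection argument across the ideal boundary of $\riem^B$ (a conformal self-map fixing a boundary curve pointwise extends to the double and is the identity) together with $2g-2+n>0$; and proper discontinuity reduced to finiteness, for each $M$, of the set of classes in $\pmcgi[\riem^B]$ admitting an $M$-quasiconformal representative rel $\partial\riem^B$, established via a filling system of curves and arcs, extremal-length quasi-invariance, and the Alexander method. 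One remark worth recording: since $T(\riem^B)$ is infinite-dimensional, closed Teichm\"uller balls are not compact, so the compact-set form of proper discontinuity is strictly weaker than what is used downstream (giving $T(\riem^B)/\DB$ its manifold structure requires that each point $p$ have a neighborhood $U$ with $[\rho]U\cap U=\emptyset$ for all nontrivial $[\rho]$). Your finiteness claim does yield that stronger conclusion directly --- finiteness of $\{[\rho]:d_T(p,[\rho]p)\le C\}$ plus freeness gives $\inf_{[\rho]\neq\id}d_T(p,[\rho]p)>0$, and isometry then produces the required disjoint neighborhood --- so it is cleaner to phrase the reduction in terms of translation lengths rather than compact sets. (The $4R$ versus $2R$ constant is of course immaterial.) You correctly flag where the genuine technical work lies: making the extremal-length quasi-invariance and bounded-length finiteness precise for simple arcs rel endpoints, and ensuring the filling system includes arcs, without which boundary-parallel Dehn twists --- which act trivially on free homotopy classes of simple closed curves --- would be invisible and the system would detect only the quotient of $\pmcgi[\riem^B]$ by $\DB(\riem^B)$ rather than $\pmcgi[\riem^B]$ itself.
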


\end{subsection}

\begin{subsection}{Description of the fibration}
\label{se:fibration}

 In this section we describe the fibers.
This requires the use of a ``rigged
Teichm\"uller space'', a concept motivated by conformal field
theory, which we will define in the next paragraph.  Details and proofs of the
statements in
this section were given in \cite{RS05}.
\begin{definition}
 Let $\Sigma^P$ be a type $(g,n)$ punctured Riemann surface.  Consider the set of
 quadruples $(\riem^P,f_1,\riem_{1}^P,\phi)$ where $\riem_{1}^P$ is a
 Riemann surface, $f_1:\riem^P \rightarrow \riem_{1}^P$ is a
 quasiconformal marking map as in the definition of $T(\riem^P)$,
 and $\phi \in \Oqc(\riem_{1}^P)$. Define the equivalence relation
 $(\riem^P,f_1,\riem_{1}^P,\phi_1) \sim
 (\riem^P,f_2,\riem_{2}^P,\phi_2)$ if and only if there exists a
 biholomorphism $\sigma:\riem_{1}^P \rightarrow \riem_{2}^P$ which
 preserves the punctures and their ordering such that $\sigma \circ \phi_1
 = \phi_2$ on $\partial \mathbb{D}$ (and hence on \disk) and $f_2^{-1}
 \circ \sigma \circ f_1$ is homotopic to the identity.  We then
 define
 \[  \widetilde{T}^P(\riem^P) = \{ (\riem^P,f_1,\riem_{1}^P,\phi)\} / \sim.  \]
\end{definition}

The rigged Teichm\"uller space $\widetilde{T}^P(\riem^P)$ possesses a
complex structure obtained in the following way.   First, we have a bijection
\[  \widetilde{T}^P(\riem^P) \cong T(\riem^B) / \DB \]
where $\DB$ and its action are defined in Section
\ref{TeichandMCG}. By Proposition \ref{DBaction},
$\widetilde{T}^P(\riem^P)$ inherits a complex structure from
$T(\riem^B)$ and the bijection is in fact a biholomorphism
\cite[Theorem 5.7 parts (3) and (4)]{RS05}.

Next define the map $\PDB :T(\riem^B) \rightarrow
\widetilde{T}^P(\riem^P)$ from \cite[Section 5.4]{RS05} in the following way (note that there, the map is denoted $P_{DB}$).  Fix a base
welding $\tau: \partial \mathbb{D}^n \rightarrow \partial \riem^B$
and a base Riemann surface $\riem^P= \riem^B \#_{\tau} \cdisk^n$.
Given $[\riem^B, h, \riem_1^B] \in T(\riem^B)$, let $\riem_1^P =
\riem_1^B \#_{h \circ \tau} \cdisk^n$, and define  $\tilde{h} : \riem^P \to \riem_1^P$ by 
\begin{equation} \label{eq:ftilde}
 \tilde{h} = \begin{cases}  h & \text{on } \riem^B \\
 \text{id} & \text{on } \mathbb{D}_0^n,
 \end{cases}
\end{equation}
Define $\tauP : \cdisk^n \to \riem^P$ by (\ref{eq:tauextension_definition}),
and set
\begin{equation}
\label{eq:PDBdef}
\PDB([\riem^B,h,\riem^B_1])= [\riem^P,\tilde{h},\riem^P_1,
\tilde{h} \circ \tauP] .
\end{equation}

This map satisfies $\PDB(p)=\PDB(q)$
if and only if $p$ is equivalent to $q$ under the action of $\DB$. Furthermore $\PDB$ is a holomorphic map which is locally a biholomorphism in the
sense that for any point in $w \in T(\riem^B)$ there's an open
set $U$ containing $w$ on which $\PDB$ is a biholomorphism onto an
open subset of $\widetilde{T}^P(\riem^P)$.
\begin{remark} \label{re:TtildePTBbiholomorphism}
  The biholomorphism between $T^B(\riem^B)/DB$ and $\widetilde{T}^P(\riem^P)$
  is given explicitly by 
\begin{align*}
 G:T^B(\riem^B)/DB & \rightarrow \widetilde{T}^P(\riem^P) \\
 [\riem^B,h,\riem^B_1] & \mapsto \PDB([\riem^B,h,\riem^B_1])
\end{align*}
That $G$ is well-defined and a
biholomorphism was established in \cite{RS05}.  It is not possible
to write the inverse explicitly, although we can say the
following:
\[  G^{-1}([\riem^P,f,\riem_*^P,\phi]) =
   [\riem^B,f_{\phi},\riem^P_* \backslash \phi(\mathbb{D}_0^n)] \]
where it is understood that the right hand side is a
representative of a point in $T^B(\riem^B)/DB$, and $f_{\phi}$ is a
quasiconformal map satisfying (1) $f_{\phi}$ is homotopic to $f$, and (2) $h \circ \tau=\phi$ on $\partial \mathbb{D} $.
Details are found in \cite[Theorems 5.5 and 5.6]{RS05}.
\end{remark}

\begin{remark} \label{re:PDB_no_inverse}
 Similarly, although $\PDB$ has a local holomorphic inverse, one cannot be completely explicit about it.
 Many of the difficulties in this paper can be traced to this fact.  However, we are able to write the restriction of the inverse to specified holomorphic curves.

  The local invertibility of $\PDB$ was established with the help of an existence theorem for quasiconformal maps, which is ultimately based on the $\lambda$-lemma.  See \cite[Sections 4 and 5.4]{RS05}.
\end{remark}

Next we define two fiber projections. Firstly, let
\begin{align} \label{eq:Fdefinition}
 \mathcal{F}: \widetilde{T}^P(\riem^P) & \rightarrow  T(\riem^P) \\
 [\riem^P,f,\riem_1^P,\phi] & \mapsto  [\riem^P,f,\riem_{1}^P].
 \nonumber
\end{align}
Note that in \cite{RS05} we call this map $\mathcal{F}_T$.
Secondly, we have the sewing map
\begin{align}
\label{sewingmap}
\C: T(\riem^B) & \longrightarrow T(\riem^P) \\
[\riem^B,h,\riem_{1}^B] & \longmapsto [\riem^B \#_{\tau}
\cdisk^n , \tilde{h}, \riem_{1}^B \#_{h \circ \tau} \cdisk^n]. \nonumber
\end{align}
It was proved in
\cite[Section 6]{RS05} that $\mathcal{F} \circ \PDB = \mathcal{C}$
and $\mathcal{C}$ is a holomorphic map.
Since $\PDB$ is a local biholomorphism it follows immediately that
$\mathcal{F}$ is a holomorphic map.

We define the fibers by
\[  F_B([\riem^P,f,\riem_{1}^P]) =
   \mathcal{C}^{-1}([\riem^P,f,\riem_{1}^P]).  \]
and
\[  F_P([\riem^P,f,\riem_{1}^P]) =
\mathcal{F}^{-1}([\riem^P,f,\riem_{1}^P]).  \]
 We will often denote
a particular fiber by $F_B$ or $F_P$, if there is no fear of
confusion.  The action of $\DB$ preserves each fiber $F_B$ since
$\mathcal{F} \circ \PDB = \mathcal{C}$ and thus we can conclude that
$F_P = G(F_B/ \DB)$.
\begin{remark}  $\mathcal{C}$ and $\mathcal{F}$, and hence the fibrations themselves,
 depend on the choice of $\tau$.
\end{remark}

For the convenience of the reader we summarize this section with a theorem.
\begin{theorem}
\label{th:sewing_summary}
\hfill
\begin{enumerate}
 \item $\widetilde{T}^P(\riem^P)$ possesses a complex structure,
  \item $\PDB$ is holomorphic, and for every point $w \in T(\riem^B)$, there is an open neighborhood $U$ of $w$ such that $\PDB$ is a biholomorphism onto its open image.
 \item $\mathcal{F} \circ \PDB=\mathcal{C}$.
 \item $\mathcal{F}$ and $\mathcal{C}$ are holomorphic and onto.
\end{enumerate}
\end{theorem}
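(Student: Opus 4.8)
The plan is to assemble the four assertions from the constructions and theorems of \cite{RS05} recalled in this section; the only genuinely new input is the elementary identity of part (3). For part (1), I would use the bijection $\widetilde{T}^P(\riem^P) \cong T(\riem^B)/\DB$ together with Proposition \ref{DBaction}: since $\DB$ acts properly discontinuously, fixed-point freely, and by biholomorphisms on the complex Banach manifold $T(\riem^B)$, the quotient inherits a unique complex structure for which the canonical projection is a local biholomorphism, and this structure is transported to $\widetilde{T}^P(\riem^P)$ along the bijection. This is precisely \cite[Theorem 5.7 (3),(4)]{RS05}.

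For part (2), I would factor $\PDB$ as the quotient projection $T(\riem^B)\to T(\riem^B)/\DB$ followed by the biholomorphism $G$ of Remark \ref{re:TtildePTBbiholomorphism}. The first factor is a local biholomorphism by the very construction of the complex structure in part (1), and the second is a biholomorphism, so $\PDB$ is holomorphic and, near every point, a biholomorphism onto an open set. The fibre identification (that $\PDB(p)=\PDB(q)$ exactly when $p$ and $q$ lie in the same $\DB$-orbit) and the existence of local holomorphic inverses are established in \cite[Sections 4 and 5.4]{RS05}; the latter ultimately rests on an existence theorem for quasiconformal mappings obtained from the $\lambda$-lemma.

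For part (3) the verification is direct from the definitions. Given $[\riem^B,h,\riem_1^B]\in T(\riem^B)$, formula (\ref{eq:PDBdef}) gives $\PDB([\riem^B,h,\riem_1^B]) = [\riem^P,\tilde{h},\riem_1^P,\tilde{h}\circ\tauP]$, where $\riem_1^P = \riem_1^B \#_{h\circ\tau}\cdisk^n$ and $\tilde{h}$ is as in (\ref{eq:ftilde}). Applying $\mathcal{F}$ from (\ref{eq:Fdefinition}) simply forgets the rigging, producing $[\riem^P,\tilde{h},\riem_1^P] = [\riem^B\#_\tau\cdisk^n,\tilde{h},\riem_1^B\#_{h\circ\tau}\cdisk^n]$, which is $\mathcal{C}([\riem^B,h,\riem_1^B])$ by (\ref{sewingmap}). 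Since $\mathcal{F}$ and $\mathcal{C}$ are already known to be well-defined on equivalence classes, this settles $\mathcal{F}\circ\PDB = \mathcal{C}$.

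For part (4), holomorphy of $\mathcal{C}$ is the ``sewing is holomorphic'' theorem of \cite[Section 6]{RS05}, which I would quote; holomorphy of $\mathcal{F}$ then follows locally from part (3), since near any point $\mathcal{F}=\mathcal{C}\circ(\PDB)^{-1}$ with $(\PDB)^{-1}$ a local holomorphic inverse supplied by part (2). For surjectivity, note that $\Oqc(\riem_1^P)$ is nonempty for every compact surface of type $(g,n)$ (take disjoint conformal coordinate disks centered at the punctures), so every $[\riem^P,f,\riem_1^P]\in T(\riem^P)$ is the image under $\mathcal{F}$ of $[\riem^P,f,\riem_1^P,\phi]$; and $\PDB$ is onto because it is the surjective quotient projection followed by the bijection $G$, so $\mathcal{C}=\mathcal{F}\circ\PDB$ is onto as well. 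The genuine obstacles here --- the holomorphy of sewing and the local invertibility of $\PDB$ --- are not re-proved; they are the technical core of \cite{RS05}. Within the present paper the only step needing work is the bookkeeping of part (3), where the main care is in keeping the markings, the base welding $\tau$, and its extension $\tauP$ straight.
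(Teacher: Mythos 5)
Your proposal is correct and takes essentially the same route as the paper: the paper presents this theorem as a summary of the preceding discussion and of results quoted from \cite{RS05}, and your argument assembles exactly those same ingredients (Proposition \ref{DBaction} and \cite[Theorem 5.7]{RS05} for part (1), the factorization $\PDB = G\circ\pi$ through the quotient for part (2), \cite[Section 6]{RS05} for holomorphy of $\mathcal{C}$, and the deduction $\mathcal{F} = \mathcal{C}\circ(\PDB)^{-1}$ for holomorphy of $\mathcal{F}$). The only additions are your explicit unwinding of the definitions for part (3), which the paper merely cites, and the surjectivity argument (nonemptiness of $\Oqc(\riem_1^P)$ via disjoint coordinate disks at the punctures, plus surjectivity of $\PDB$), which the paper states in the theorem but leaves to the reader; both are correct and sensible details to supply.
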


\end{subsection}
\begin{subsection}{Fibers are complex submanifolds of $T(\riem^B)$}
 In this section we demonstrate that the fibers
are complex submanifolds of $T(\riem^B)$.  This requires the
construction of local sections.  The construction relies on a method
of Gardiner \cite{Gardiner} (see also Nag \cite{Nag}), who used Schiffer variation to
construct local coordinates on Teichm\"uller spaces of punctured Riemann
surfaces of finite type.

We outline some necessary facts about complex submanifolds of Banach
spaces.  These can be found in for example \cite[Section
1.6.2]{Nag}, and also \cite{Lang} in the differentiable setting.

Let $E_1$ and $E_2$ be Banach spaces, and $Y$ a complex Banach
manifold.  Let $U_1$ and $U_2$ be open subsets of $E_1$ and $E_2$
respectively.  We say that $g:U_1 \times U_2 \rightarrow Y$ is a
projection, if there is a map $h: U_1 \rightarrow Y$ such that $g =
h \circ \mbox{pr}_1$ where $\mbox{pr}_1: U_1 \times U_2 \rightarrow
U_1$ is the projection onto the first component.
\begin{definition}  Let $X$ and $Y$ be complex Banach manifolds.
$f:X \rightarrow Y$ is a holomorphic submersion if $f$ is
holomorphic, for all $x \in X$ there is a chart $(U,\phi)$ on $X$
with $x \in U$ and open sets $U_1 \subset E_1$ and $U_2 \subset E_2$
of Banach spaces $E_1$ and $E_2$, such that (1) $\phi:U \rightarrow
U_1 \times U_2$ is a biholomorphism and (2) there is a chart
$(V,\psi)$ with $f(x) \in V$, such that $\psi \circ f \circ
\phi^{-1}$ is a projection.
\end{definition}
A holomorphic fiber space is defined as follows.
\begin{definition}
A holomorphic fiber space is a pair of complex Banach manifolds
$(X,Y)$ together with a holomorphic, submersive and surjective map
$\pi : X \to Y$.
\end{definition}
It follows immediately from the definition of a submersion that the
fibers of a submersion are complex submanifolds.
\begin{lemma}
\label{submersion_submanifold} A holomorphic submersion $F: X\to Y$
is an open mapping and the fibers $F^{-1}(y)$ are complex
submanifolds of $X$.
\end{lemma}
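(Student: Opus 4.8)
The statement is purely local in $X$, so the plan is to reduce, at each point, to the model situation guaranteed by the definition of a holomorphic submersion and to verify both claims there. First I would fix $x \in X$ and invoke the definition to produce a chart $(U,\phi)$ on $X$ with $x \in U$ and a biholomorphism $\phi : U \to U_1 \times U_2 \subset E_1 \times E_2$, together with a chart $(V,\psi)$ with $F(x) \in V$ such that $\psi \circ F \circ \phi^{-1}$ is a projection; shrinking $U$ I may assume $F(U) \subset V$. Recall that in the notion of projection the auxiliary map $h$ is a biholomorphism onto an open subset of the chart (see the references cited in this subsection), so after replacing $\psi$ by $h^{-1}\circ\psi$ I may assume outright that $\psi \circ F \circ \phi^{-1} = \mathrm{pr}_1 : U_1 \times U_2 \to U_1$ is the projection onto the first factor.

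For the openness assertion, I would observe that $\mathrm{pr}_1$ is an open map: the image of a basic open set $W_1 \times W_2$ (with $W_i$ open in $U_i$) is $W_1$, and an arbitrary open subset of $U_1 \times U_2$ is a union of such boxes. Hence the restriction $F|_U = \psi^{-1} \circ \mathrm{pr}_1 \circ \phi$ is open, being the composite of the homeomorphisms $\phi$ and $\psi^{-1}$ with the open map $\mathrm{pr}_1$. Since every point of $X$ lies in such a neighborhood and openness of a continuous map can be tested locally on the source, it follows that $F$ itself is open.

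For the fiber assertion, I would fix $y \in Y$, take an arbitrary $x \in F^{-1}(y)$, choose charts as above, and set $a = \mathrm{pr}_1(\phi(x)) \in U_1$; then $\psi(y) = a$ and $F^{-1}(y) \cap U = \phi^{-1}\bigl(\{a\} \times U_2\bigr)$. It then suffices to note that $\{a\} \times U_2$ is a complex submanifold of $U_1 \times U_2$: the translation $(u_1,u_2) \mapsto (u_1 - a, u_2)$ is a biholomorphism of $U_1 \times U_2$ onto $(U_1 - a) \times U_2$ carrying $\{a\} \times U_2$ onto $\{0\} \times U_2$, which is precisely the intersection of $(U_1 - a) \times U_2$ with the closed subspace $\{0\} \times E_2$, and $\{0\} \times E_2$ is complemented in $E_1 \times E_2$ by $E_1 \times \{0\}$. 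Transporting this submanifold chart back through $\phi$ exhibits $F^{-1}(y) \cap U$ as a complex submanifold of $U$, hence of $X$; since $x$ was arbitrary and the submanifold property is local, $F^{-1}(y)$ is a complex submanifold of $X$.

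I do not expect a genuine obstacle here: the argument is bookkeeping with the charts supplied by the submersion hypothesis. The one point worth a moment's attention is that the slice $\{a\} \times U_2$ must be a \emph{split} submanifold in the Banach category --- i.e.\ its tangent space $\{0\} \times E_2$ must be a complemented subspace --- and this is automatic precisely because the local model $E_1 \times E_2$ is already presented as a direct sum, with the complementary summand $E_1 \times \{0\}$ at hand.
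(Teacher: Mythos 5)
The paper does not actually prove this lemma: it is stated without proof after the remark that ``it follows immediately from the definition of a submersion,'' with Nag \cite[Section 1.6.2]{Nag} and Lang \cite{Lang} cited as references for the background. Your argument fills in precisely those details, and it is the standard one: reduce to the local model, observe that $\mathrm{pr}_1$ is open, and exhibit the slice $\{a\}\times U_2$ as a submanifold chart via the complemented subspace $\{0\}\times E_2\subset E_1\times E_2$. The one delicate point you correctly flag is the role of $h$: the paper's definition of ``projection'' only requires $g=h\circ\mathrm{pr}_1$ for \emph{some} map $h$, and taken literally this is too weak (a constant $h$ would destroy openness, and an $h$ with a singular level set would destroy the submanifold conclusion). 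You resolve this by reading $h$ as a biholomorphism onto an open subset of the chart image, which matches the normal form used by Lang and Nag (where the local model for a submersion is exactly $\mathrm{pr}_1$), and with that reading your proof is complete and correct.
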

We will use the following characterization of submersions.
\begin{lemma}
\label{submersion_section}
A holomorphic mapping $F: X \to Y$ between Banach spaces is submersive if and only if it possesses local holomorphic sections passing through every point $x \in X$.
\end{lemma}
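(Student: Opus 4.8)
The plan is to work throughout in local charts, so that we may assume $X$ and $Y$ are open subsets of complex Banach spaces $E_X$ and $E_Y$. For the ``only if'' direction I would use that a canonical projection visibly admits holomorphic sections. Given a submersive $F$ and a point $x \in X$, choose submersion charts around $x$ and $F(x)$; normalizing the target chart we may take the map $h$ in the definition of ``projection'' to be the identity, so that $F$ reads near $x$ as the canonical projection $\mathrm{pr}_1 : U_1 \times U_2 \to U_1$, with $x$ corresponding to some $(a,b)$. Then $v \mapsto (v,b)$ is a holomorphic section of $\mathrm{pr}_1$ through $(a,b)$, and transporting it back through the two charts produces a local holomorphic section of $F$ through $x$.

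For the ``if'' direction, fix $x \in X$, set $y = F(x)$, and translate so that $x=0 \in E_X$, $y=0 \in E_Y$ and $F(0)=0$. By hypothesis there is a holomorphic section $s$ defined on a neighborhood of $0$ in $E_Y$ with $s(0)=0$ and $F \circ s = \mathrm{id}$ there. Differentiating at $0$ gives $dF_0 \circ ds_0 = \mathrm{id}_{E_Y}$; hence $ds_0$ is injective, $dF_0$ is surjective, and $P := ds_0 \circ dF_0$ is a bounded idempotent of $E_X$. This yields a topological direct sum decomposition $E_X = E_1 \oplus E_2$ with $E_1 := \mathrm{im}(P) = \mathrm{im}(ds_0)$ and $E_2 := \ker(P) = \ker(dF_0)$, both closed. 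This splitting is what will display $F$ locally as a projection.

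The crux is to construct the correct chart on $X$ near $0$. I would let $\pi := \mathrm{id}_{E_X} - P$ be the projection of $E_X$ onto $E_2$ along $E_1$ and set $\kappa(p) := (F(p), \pi(p)) \in E_Y \times E_2$. The key computation is that $d\kappa_0(e) = (dF_0 e,\ \pi e)$ is a bounded linear isomorphism of $E_X$ onto $E_Y \times E_2$: it is injective because $\ker dF_0 \cap \ker \pi = E_2 \cap E_1 = \{0\}$, and surjective because $d\kappa_0(ds_0 v + w) = (v,w)$ for every $(v,w) \in E_Y \times E_2$ (using $dF_0 \circ ds_0 = \mathrm{id}$ together with $E_2 \subset \ker dF_0$ and $E_1 \subset \ker \pi$); it is then a topological isomorphism by the open mapping theorem. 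By the holomorphic inverse function theorem, $\kappa$ restricts to a biholomorphism of a neighborhood of $0$ in $X$ onto an open subset of $E_Y \times E_2$, which I would then shrink to a product $U_1 \times U_2$. Since $\mathrm{pr}_1 \circ \kappa = F$ by construction, in the chart $\kappa$ on $X$ and the identity chart on $Y$ the map $F$ is exactly the projection $\mathrm{pr}_1 : U_1 \times U_2 \to U_1$. As $x$ was arbitrary, $F$ is submersive.

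I expect the one genuine obstacle to be the choice of the chart $\kappa$. The naive candidate $\Phi(v,w) = s(v) + w$ is indeed a local biholomorphism near $x$, but it does \emph{not} turn $F$ into a projection, since $F(s(v)+w) \neq v$ in general; one must instead coordinatize a point of $X$ by its $F$-image together with a complementary $E_2$-component. The technical heart of the argument is thus the verification that $P = ds_0 \circ dF_0$ is idempotent and induces a topological splitting of $E_X$, after which the rest reduces to a routine application of the holomorphic inverse function theorem.
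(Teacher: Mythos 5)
The paper does not actually prove this lemma; it is stated as a known fact, with the reader referred to Nag [Section 1.6.2] and to Lang for the differentiable analogue. So there is no paper proof to compare against. Your argument is correct, complete, and is essentially the standard one: in the ``if'' direction, the bounded idempotent $P = ds_0 \circ dF_0$ gives the topological splitting $E_X = \operatorname{im}(ds_0) \oplus \ker(dF_0)$, and $\kappa(p) = (F(p), (\operatorname{id}-P)p)$ is exactly the right chart because $\operatorname{pr}_1 \circ \kappa = F$ holds identically (not merely to first order), so the inverse function theorem finishes it. Your observation that the more obvious candidate $\Phi(v,w) = s(v)+w$ fails is accurate and worth recording.

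One point worth flagging, although it concerns the paper's definition rather than your argument: as literally stated, the paper's notion of ``projection'' only requires $g = h \circ \operatorname{pr}_1$ for \emph{some} holomorphic $h$, with no invertibility condition on $h$. Under that literal reading the ``only if'' direction would be false (take $h(z) = z^2$: then $F(z_1,z_2) = z_1^2$ would be a ``submersion'' with no local holomorphic section through $0$). Your step ``normalizing the target chart we may take $h$ to be the identity'' therefore quietly uses that $h$ is a local biholomorphism, which is surely the intended meaning (it matches Lang's definition, where a submersion looks locally like $\operatorname{pr}_1$ itself). It would tighten the write-up to say explicitly that $h$ is assumed to be a biholomorphism onto its image, or equivalently that $F$ is locally $\operatorname{pr}_1$ in suitable charts, before invoking the normalization.
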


We now briefly describe Schiffer variation of Riemann surfaces.  For
details see \cite{Gardiner} or \cite[Section 4.3]{Nag}.   Let $\riem$ be either a punctured Riemann
surface of type $(g,n)$
or a bordered Riemann surface of type $(g,n)$.  
Let $(V,\zeta)$ be a holomorphic chart on $\riem$ such that $\overline{\disk} \subset \zeta(V)$, and let $U = \zeta^{-1}(\disk)$ be a parametric disk on $\riem$.

For $\epsilon$ in a sufficiently small disk centered at $0 \in \mathbb{C}$, the map $v^{\epsilon}:\partial \disk \rightarrow
\mathbb{C}$ given by $v^{\epsilon}(z)=z+\epsilon/z$ is a biholomorphism on some neighborhood of $\partial \disk$.  Let $D^\epsilon$ denote the region bounded by
$v^{\epsilon}({\partial \mathbb{D}})$.  We obtain a new Riemann surface
$\riem^\epsilon$ as follows:
\[  \riem^\epsilon = (\riem \backslash U) \sqcup D^\epsilon/\sim  \]
where $x \in \partial U$ and $x' \in \partial D^\epsilon$ are
equivalent, $x \sim x'$, if $x'=v^{\epsilon} \circ \zeta(x)$. The complex structure on  $\riem^\epsilon$
is compatible with $D^\epsilon$ and $\riem \backslash \overline{U}$.

The quasiconformal map $w^\epsilon:\overline{\mathbb{D}} \rightarrow
D^\epsilon$ given by $w^\epsilon(z) = z+ \epsilon \bar{z}$ has
complex dilatation $\epsilon$.  Let $\nu^\epsilon:\riem \rightarrow
\riem^\epsilon$ be defined by
 \[
 \nu^\epsilon(x) =
 \begin{cases}
     x  & \text{if }  x \in \riem \backslash U \\
      w^\epsilon \circ \zeta(x) & \text{if }  x \in U .
 \end{cases}
 \]
This map is quasiconformal with dilatation $0$ on $\riem \backslash
\overline{U}$ and dilatation $\epsilon d\bar{\zeta}/d\zeta$ on $U$.

Let $\Omega$ be some polydisc centered at $0 \in \mathbb{C}^n$ and $\epsilon =(\epsilon_1,\ldots,
\epsilon_n) \in \Omega$.  The above variation procedure can be applied to $n$ non-overlapping parametric disks $U_1,\ldots, U_n$ to obtain a Riemann
surface $\riem^\epsilon$ and a quasiconformal map $\nu^\epsilon:\riem
\rightarrow \riem^\epsilon$.  We thus have a map
\begin{eqnarray*}
 S: \Omega & \rightarrow & T(\riem) \\
 \epsilon & \mapsto & [\riem,\nu^\epsilon,\riem^\epsilon] .
\end{eqnarray*}

The key result of \cite{Gardiner} and \cite[Theorem 4.3.2]{Nag} is the following:

\begin{theorem}
\label{SchifferTheorem} \hfill
\begin{enumerate}
\item The map $S$ is holomorphic for any type of Riemann surface $\riem$.
\item Let $\riem^P$ be a punctured surface of type $(g,n)$, and let $d = 3g-3+n$. For an essentially arbitrary choice of parametric disks $U_1, \ldots, U_n$, the parameters $(\epsilon_1,\ldots, \epsilon_d)$ provide local holomorphic coordinates for $T(\riem^P)$ in a neighborhood of $[\riem,\id, \riem]$. That is,  $S$ is a biholomorphism onto its image.
\end{enumerate}
\end{theorem}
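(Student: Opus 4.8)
The result is due to Gardiner \cite{Gardiner} and Nag \cite[Theorem 4.3.2]{Nag}; here is how I would organize the argument. For part (1) the plan is to factor $S$ through the fundamental projection $\Phi$ of Theorem \ref{th:Phiinducescomplexstructure}. The map $\nu^\epsilon:\riem\to\riem^\epsilon$ has Beltrami differential $\mu_\epsilon=\sum_{i}\epsilon_i\,\chi_{U_i}\,d\bar\zeta_i/d\zeta_i$, which vanishes near $\partial\riem$ since the $U_i$ lie in the interior. Since $\nu^\epsilon$ and a quasiconformal map $f^{\mu_\epsilon}$ of dilatation $\mu_\epsilon$ (as in the definition of $\Phi$) have the same complex dilatation, $\sigma=f^{\mu_\epsilon}\circ(\nu^\epsilon)^{-1}$ is conformal and exhibits $[\riem,\nu^\epsilon,\riem^\epsilon]=\Phi(\mu_\epsilon)$, the homotopy condition (rel boundary in the bordered case) being satisfied trivially because the relevant composition is the identity. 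Hence $S=\Phi\circ M$ where $M:\Omega\to L^\infty_{-1,1}(\riem)_1$, $M(\epsilon)=\mu_\epsilon$, is the restriction of a complex-linear map, hence holomorphic; composing with the holomorphic $\Phi$ gives part (1), and this reasoning is insensitive to the type of $\riem$.

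For part (2) put $\riem=\riem^P$ and $o=[\riem^P,\id,\riem^P]$, and take $d$ parametric disks $U_1,\dots,U_d$. Since $S$ is holomorphic, $\dim_{\mathbb C}\Omega=d=\dim_{\mathbb C}T(\riem^P)$, and $T(\riem^P)$ carries holomorphic charts coming from local sections of $\Phi$, the holomorphic inverse function theorem reduces the claim to showing that $dS_0:\mathbb{C}^d\to T_oT(\riem^P)$ is injective, hence (equal finite dimensions) an isomorphism. The plan is to identify $T_oT(\riem^P)$ with $L^\infty_{-1,1}(\riem^P)/N$, where $N$ is the annihilator, under the pairing $\langle\mu,\psi\rangle=\iint_{\riem^P}\mu\psi$, of the $d$-dimensional space $Q(\riem^P)$ of holomorphic quadratic differentials on $\riem^P$ with at worst simple poles at the punctures. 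Under this identification $d\Phi_0$ is the quotient map, so since $M$ is linear, $dS_0(\epsilon)$ is the class of $\mu_\epsilon$. A short computation with the mean value property shows that for $\psi$ with local expression $q_i(\zeta_i)\,d\zeta_i^2$ near $U_i$ one has $\langle\chi_{U_i}d\bar\zeta_i/d\zeta_i,\psi\rangle=c\,q_i(0)$ for a fixed $c\neq 0$, so $dS_0$ is injective exactly when the $d$ functionals $\psi\mapsto q_i(0)$ are linearly independent on $Q(\riem^P)$; as $\dim Q(\riem^P)=d$, this means precisely that no nonzero $\psi\in Q(\riem^P)$ vanishes at all of the centers $z_i^0=\zeta_i^{-1}(0)$.

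It then remains to choose the parametric disks so that this condition holds, and this is the step I expect to carry the actual content. I would pick the centers inductively: given $z_1^0,\dots,z_{j-1}^0$ with $Q_j:=\{\psi\in Q(\riem^P):\psi(z_1^0)=\cdots=\psi(z_{j-1}^0)=0\}$ of dimension $d-j+1$, choose a nonzero $\psi_j\in Q_j$; because $\psi_j$ has only isolated zeros, the points $z_j^0$ lying off its zero set and distinct from the earlier centers form an open dense set, and any such choice gives $\dim Q_{j+1}=d-j$. After $d$ steps $Q_{d+1}=0$, which is the desired condition, and rescaling the charts makes the $U_i$ non-overlapping and contained in coordinate disks. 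Since at each stage the excluded locus is a proper analytic (in fact finite) subset, the construction succeeds for an essentially arbitrary choice of disks, proving part (2). The one delicate point throughout is the bookkeeping at the punctures: the Beltrami differentials that occur are bounded and supported away from the $p_i$, and these pair nontrivially with exactly the quadratic differentials allowed simple poles there — the space of dimension $3g-3+n=\dim T(\riem^P)$ — so the dimension count closes up.
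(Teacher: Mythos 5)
The paper does not actually prove Theorem \ref{SchifferTheorem}; it cites Gardiner and Nag \cite{Gardiner,Nag} and records only a one-line indication of each part, namely that (1) follows from holomorphicity of $\epsilon\mapsto\mu(\nu^\epsilon)$ and that (2) amounts to the nontrivial independence of the Schiffer directions. Your reconstruction --- factoring $S=\Phi\circ M$ with $M$ linear for part (1), and for part (2) identifying $T_oT(\riem^P)$ with the dual of the integrable quadratic differentials, computing $\langle\chi_{U_i}\,d\bar\zeta_i/d\zeta_i,\psi\rangle=\pi q_i(0)$ by the mean value property, and choosing the centers inductively off the (finite) zero sets --- is correct and is in substance the argument the cited references give, so it matches the approach the paper is relying on.
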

The first result follows directly from the fact that dilatation of $\nu^{\epsilon}$ is holomorphic in $\epsilon$.
For the second result, it is non-trivial to prove that the variations give independent directions in $T(\riem^P)$.

Getting coordinates at an arbitrary point $[\riem^P,f,\riem_1^P]$ in \teich space follows by applying a change of base surface biholomorphism from $T(\riem^P)$ to $T(\riem^P_1)$ in the following way.
First we apply Schiffer variation to $\riem^P_1$. This gives a neighborhood of the base point in $T(\riem^P_1)$. Let $f_* : T(\riem^P_1) \to T(\riem^P)$ be the change of base surface biholomorphism corresponding to $f$. That is, $f_*([\riem^P_1, \nu^{\epsilon}, \riem^P_1]) = [\riem^P, \nu^{\epsilon}\circ f, \riem^P_1]$. Thus, the image under $f_*$ of the Schiffer neighborhood in $T(\riem^P_1)$ is the Schiffer neighborhood of $[\riem^P, f, \riem^P_1] \in T(\riem^P)$.

We can now show that
\begin{theorem} \label{th:Schiffer_section}
The map $\C$ (\ref{sewingmap}) possess local holomorphic sections through every point.
\end{theorem}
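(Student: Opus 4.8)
The plan is to produce, near an arbitrary point $w = [\riem^B, h, \riem_1^B] \in T(\riem^B)$, a local holomorphic section of $\C$ through $w$ by combining Schiffer variation on the punctured surface with the local inverse of $\PDB$. First I would set $[\riem^P, \tilde h, \riem_1^P] = \C(w) \in T(\riem^P)$, where $\riem_1^P = \riem_1^B \#_{h\circ\tau}\cdisk^n$. Apply Schiffer variation to the target surface $\riem_1^P$: choose $n$ non-overlapping parametric disks $U_1,\ldots,U_n$ in $\riem_1^P$, and, crucially, choose them inside the caps $\tauP_i(\disk_0)$, i.e. disjoint from the image of $\riem^B$ under the sewing. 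By Theorem~\ref{SchifferTheorem} (applied to $\riem_1^P$ and transported by the change-of-base-surface biholomorphism $(\tilde h)_*$, as described just before Theorem~\ref{th:Schiffer_section}), the resulting map $S \colon \Omega \to T(\riem^P)$ is holomorphic and, after discarding coordinates, restricts to a biholomorphism from a $d$-dimensional polydisc onto a neighborhood $N$ of $\C(w)$ in $T(\riem^P)$, where $d = 3g-3+n$.

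The second step is to lift this Schiffer neighborhood into $\widetilde T^P(\riem^P)$ in a way compatible with the rigging, and then into $T(\riem^B)$. Because the parametric disks were chosen inside the caps, the Schiffer surgery does not touch the bordered part $\riem^B \subset \riem_1^P$ nor the boundary curves $\tauP_i(\partial\disk)$; hence the rigging $\phi_0 = \tilde h \circ \tauP$ survives the surgery essentially unchanged on $\partial\disk$ (one may need to compose with the Schiffer map $v^{\epsilon}$ on the caps, but this is holomorphic in $\epsilon$ and fixes a collar of $\partial\disk$). This gives a holomorphic lift $\tilde S \colon \Omega \to \widetilde T^P(\riem^P)$ of $S$, i.e. a holomorphic section of $\mathcal{F}$ over $N$ passing through $\PDB(w)$. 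Finally, since $\PDB$ is a local biholomorphism by Theorem~\ref{th:sewing_summary}(2), compose with $\PDB^{-1}$ on a neighborhood of $\PDB(w)$ to obtain a holomorphic section $\sigma$ of $\mathcal{F}\circ\PDB = \C$ over $N$ with $\sigma(\C(w)) = w$. Shrinking $N$ so that its image lies in the domain where $\PDB^{-1}$ is defined completes the construction, and Lemma~\ref{submersion_section} (together with Lemma~\ref{submersion_submanifold}) then yields that $\C$ is a holomorphic submersion and the fibers $F_B$ are complex submanifolds of $T(\riem^B)$.

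The main obstacle is the compatibility of the Schiffer variation with the sewing structure, i.e. verifying that the Schiffer surgery performed inside the caps genuinely produces a holomorphic family in $\widetilde T^P(\riem^P)$ with the rigging varying holomorphically. Concretely: the surface $\riem_1^{P,\epsilon}$ obtained by Schiffer variation is the sewing of $\riem_1^B$ with \emph{modified} caps $D^{\epsilon}_i$ via the \emph{same} boundary identification $h\circ\tau$ (up to reparametrization), so one must check that the induced rigging map $\phi^{\epsilon} \colon \disk \to \riem_1^{P,\epsilon}$ — which on $\partial\disk$ should still agree with $\tilde h \circ \tau$ — is well-defined, lies in $\Oqc(\riem_1^{P,\epsilon})$, and depends holomorphically on $\epsilon$ in the complex structure of $\widetilde T^P$. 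This is where I expect to invoke the holomorphic dependence of sewing in Teichm\"uller space from \cite{RS05} and the holomorphicity of $v^{\epsilon}$ and $w^{\epsilon}$ in $\epsilon$. A secondary, more technical point is bookkeeping the change-of-base-surface biholomorphisms and homotopy classes so that the section actually passes through the prescribed point $w$ rather than merely through a point in its $\DB$-orbit; one uses that $\DB$ preserves fibers $F_B$ and that $\PDB^{-1}$ can be pinned down on the relevant holomorphic curve as in Remarks~\ref{re:TtildePTBbiholomorphism} and~\ref{re:PDB_no_inverse}.
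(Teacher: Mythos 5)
Your overall strategy (Schiffer variation to build a section, combined with the holomorphicity results of \cite{RS05}) is the right one, and your formal observation that a holomorphic lift $\tilde S$ of $S$ to $\widetilde{T}^P(\riem^P)$ followed by $\PDB^{-1}$ would give a section of $\C = \mathcal{F}\circ\PDB$ is sound. However, your choice of where to place the Schiffer disks is exactly backwards, and this is not a cosmetic issue --- it is the crux of the proof.

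You place the $d=3g-3+n$ parametric disks \emph{inside the caps} $\tauP_i(\disk_0)$, intending that the sewing structure is undisturbed and the rigging ``survives unchanged.'' But this is where the argument breaks down. The rigging $\phi^\epsilon$ must be a \emph{conformal} map of $\disk$ onto the cap of $\riem_1^{P,\epsilon}$, and the Schiffer surgery changes the conformal structure of the cap. Your proposed fix --- post-composing $\phi_0$ with the Schiffer maps --- does not produce a conformal map: $w^\epsilon(z)=z+\epsilon\bar z$ is quasiconformal but not holomorphic, and $v^\epsilon(z)=z+\epsilon/z$ has a pole at $0$ and is defined only near $\partial\disk$. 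The correct $\phi^\epsilon$ would be the Riemann map uniformizing the deformed cap onto $\cdisk$, which is a nontrivial conformal welding that is not explicit, whose boundary trace is \emph{not} $h\circ\tau$, and whose holomorphic dependence on $\epsilon$ would itself need a separate argument. Consequently, the claim that one obtains a holomorphic lift $\tilde S$ whose image lies in the domain of a single branch of $\PDB^{-1}$, and that $\PDB^{-1}\circ\tilde S(0)=w$, is not established by the proposal. (As a minor point, the proposal initially says ``choose $n$ disks''; the correct count is $d=3g-3+n$.)

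The paper makes the opposite choice: the Schiffer disks are chosen \emph{inside the bordered part} $\riem_1^B\subset\riem_1^P$, away from the caps. This yields two parallel Schiffer maps at once: $S:\Omega\to T(\riem^P)$ (on $\riem_1^P$, a biholomorphism onto its image by Theorem~\ref{SchifferTheorem}) and $S^B:\Omega\to T(\riem^B)$ (on $\riem_1^B$, holomorphic by Theorem~\ref{SchifferTheorem}(1)). Because the disks avoid the caps, one has $f^\epsilon=f$ on $\partial\riem^B$, $(\tilde f)^\epsilon=\widetilde{f^\epsilon}$, and the sewing identity
\[
\riem_1^{B,\epsilon}\#_{f^\epsilon\circ\tau}\cdisk^n = \riem_1^{P,\epsilon},
\]
so $\C\circ S^B = S$ is immediate from the definitions, and $\eta=S^B\circ S^{-1}$ is the desired holomorphic local section passing through the given point. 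This avoids any appeal to $\widetilde{T}^P$, to $\PDB^{-1}$, or to uniformizing a deformed cap, and it sidesteps the rigging problem entirely. I would encourage you to rework the argument with the disks placed in $\riem_1^B$; once you do, the $\mathcal{F}$/$\PDB^{-1}$ machinery you invoke becomes unnecessary.
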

\begin{proof}
Let $[\riem^B, f, \riem_{1}^B]$ be an arbitrary point in $T(\riem^B)$ and let
$[\riem^P, \tilde{f}, \riem_{1}^P] = \C[\riem^B, f, \riem_{1}^B]$ where $\C$ is the sewing map defined in (\ref{sewingmap}).
Recall that $\riem_{1}^P = \riem^B_1 \#_{f \circ \tau} \cdisk^n$.
Let $d = 3g + 3 - n$. By Theorem \ref{SchifferTheorem}, we can choose $d$ disjoint disks on $\riem_{1}^B$ such that
performing Schiffer variation on $\riem_{1}^P$ using these disks results in a
biholomorphic map
\begin{align*}
S : \Omega & \longrightarrow S(\Omega) \subset T(\riem^P)  \\
\epsilon & \longmapsto [\riem^P, (\tilde{f})^{\epsilon}, \riem_{1}^{P, \epsilon}],
\end{align*}
where $(\tilde{f})^{\epsilon} =  \nu^{\epsilon} \circ \tilde{f}$, $\epsilon = (\epsilon_1, \ldots, \epsilon_d)$, and
$\Omega$ is an open neighborhood of $0 \in \mathbb{C}^d$.

Performing the Schiffer variation on $\riem_{1}^B$ using the same disks produces a holomorphic map
\begin{align*}
S^B : \Omega & \longrightarrow T(\riem^B)  \\
\epsilon & \longmapsto [\riem^B, f^{\epsilon}, \riem_{1}^{B, \epsilon}] ,
\end{align*}
where $f^{\epsilon} = \nu^{\epsilon} \circ f$.

By Theorem \ref{SchifferTheorem}, $\eta = S^B \circ S^{-1} :
S(\Omega) \to T(\riem^B)$ is holomorphic.  To show that it is a
section of $\C$ through $[\riem^B, f, \riem_{1}^B]$, it remains to
show that $\C \circ \eta$ is the identity.

Note that $f = f^{\epsilon}$ on $\partial \riem$, and by definition
of $\tilde{f}$, $(\tilde{f})^{\epsilon} = \widetilde{f^{\epsilon}}$.
Because the disks on which to perform Schiffer variation were chosen
to be away from the caps of $\riem^P_1$,
$$
\riem_{1}^{B,\epsilon} \#_{f^{\epsilon} \circ \tau} \cdisk^n = \riem_{1}^{P, \epsilon} .
$$
So we have
$$
(\mathcal{C} \circ \eta) \left([\riem^P, (\tilde{f})^{\epsilon}, \riem_{1}^{P, \epsilon}] \right) = \C \left([\riem^B, {f}^{\epsilon}, \riem_{1}^{B, \epsilon}]\right) =  [\riem^P, \widetilde{f^{\epsilon}}, \riem^{B,\epsilon}_1 \#_{f^{\epsilon} \circ \tau} \cdisk^n] = [\riem^P, (\tilde{f})^{\epsilon}, \riem^{P,\epsilon}_1]
$$
and thus $\mathcal{C} \circ \eta$ is the identity.
\end{proof}

We now have the following key results.

\begin{corollary}
\label{co:TBfiberspace}
The \teich space $T(\riem^B)$ is a holomorphic fiber space over $T(\riem^P)$ with fiber structure given by the sewing map
$$
\mathcal{C} : T(\riem^B) \longmapsto T(\riem^P) .
$$
\end{corollary}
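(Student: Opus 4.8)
The plan is to assemble the pieces already in place and check the three clauses in the definition of a holomorphic fiber space. First I would recall from Section \ref{TeichandMCG} that $T(\riem^B)$ and $T(\riem^P)$ are complex Banach manifolds, with $T(\riem^P)$ finite-dimensional of dimension $3g-3+n$. By Theorem \ref{th:sewing_summary}(4), the sewing map $\C : T(\riem^B) \to T(\riem^P)$ is holomorphic and onto. Thus the only thing left to establish is that $\C$ is a holomorphic submersion in the sense of the definition preceding Lemma \ref{submersion_submanifold}.

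To obtain submersivity I would apply Lemma \ref{submersion_section}: a holomorphic map is submersive precisely when it admits local holomorphic sections through every point. Such sections are exactly what Theorem \ref{th:Schiffer_section} provides --- for an arbitrary $[\riem^B,f,\riem^B_1] \in T(\riem^B)$, performing Schiffer variation on $\riem^B_1$ (and transporting the induced variation to the sewn surface $\riem^P_1$) produces the holomorphic map $\eta = S^B \circ S^{-1}$ with $\C \circ \eta = \id$ and $\eta$ passing through the given point. Hence $\C$ is a holomorphic submersion, and $(T(\riem^B), T(\riem^P))$ together with $\C$ is a holomorphic fiber space.

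The only technical point worth flagging is that Lemma \ref{submersion_section} is phrased for maps between Banach \emph{spaces}, while here the source and target are Banach \emph{manifolds}; since both submersivity and the existence of local holomorphic sections are local properties, one simply works in charts and the equivalence transfers verbatim. Beyond that there is no real obstacle: the genuine analytic content --- holomorphy of sewing, the local biholomorphy of $\PDB$, and the independence of the Schiffer directions in $T(\riem^P)$ --- has already been packaged into Theorems \ref{th:sewing_summary} and \ref{th:Schiffer_section}, so this corollary is essentially a bookkeeping step. As an immediate bonus, Lemma \ref{submersion_submanifold} then also yields that $\C$ is an open map and that each fiber $F_B([\riem^P,f,\riem^P_1]) = \C^{-1}([\riem^P,f,\riem^P_1])$ is a complex submanifold of $T(\riem^B)$, which is the statement motivating the next section.
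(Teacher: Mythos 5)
Your proof is correct and follows exactly the same route as the paper: cite Theorem \ref{th:sewing_summary} for holomorphy and surjectivity of $\C$, then combine Lemma \ref{submersion_section} with Theorem \ref{th:Schiffer_section} to establish that $\C$ is a submersion. The extra remark about transferring the Banach-space criterion to manifolds via charts is a reasonable clarification but does not change the argument.
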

\begin{proof}
Theorem \ref{th:sewing_summary} states that $\C$ is holomorphic and onto. Lemma \ref{submersion_section} and Theorem \ref{th:Schiffer_section} show that $\C$ is a submersion.
\end{proof}

From Lemma \ref{submersion_submanifold} we obtain:

\begin{corollary} \label{co:FBcomplexsubmanifold}
 The fibers $F_B$ are complex submanifolds of $T(\riem^B)$.
\end{corollary}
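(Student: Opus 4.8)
The plan is simply to combine the two results immediately preceding the statement. By Corollary~\ref{co:TBfiberspace}, the sewing map $\C : T(\riem^B) \to T(\riem^P)$ is a holomorphic submersion in the sense of the definition recalled in this section. By definition, $F_B([\riem^P,f,\riem_1^P]) = \C^{-1}([\riem^P,f,\riem_1^P])$ is a fiber of $\C$. Hence Lemma~\ref{submersion_submanifold}, applied to $X = T(\riem^B)$, $Y = T(\riem^P)$ and $F = \C$, yields at once that each $F_B$ is a complex submanifold of $T(\riem^B)$.

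If one wishes to unpack why Lemma~\ref{submersion_submanifold} gives what is claimed: around any point $w \in F_B$ the submersion property supplies a biholomorphic chart $\phi : U \to U_1 \times U_2$ with $U_1 \subset E_1$, $U_2 \subset E_2$, and a chart $(V,\psi)$ near $\C(w)$ such that $\psi \circ \C \circ \phi^{-1}$ is a projection, i.e.\ factors through $\operatorname{pr}_1$. In this local model $F_B \cap U$ is carried by $\phi$ onto a slice $\{a\} \times U_2$, which exhibits $F_B \cap U$ as a complex submanifold chart modeled on the Banach space $E_2$. These local submanifold charts, indexed over the points of the fiber, are mutually compatible because the ambient transition maps on $T(\riem^B)$ are biholomorphic, so $F_B$ is a complex submanifold globally.

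There is no real obstacle here: all the substance was already discharged earlier, namely in Theorem~\ref{th:Schiffer_section} (local holomorphic sections of $\C$ through every point, built via Gardiner's Schiffer variation), Lemma~\ref{submersion_section} (sections $\Leftrightarrow$ submersion), and Corollary~\ref{co:TBfiberspace} (the resulting fiber-space statement). The present corollary is therefore a one-line consequence, and the ``hard part'' — proving that Schiffer variations on $\riem_1^B$ descend correctly through the sewing operation to give a section of $\C$ — has already been carried out.
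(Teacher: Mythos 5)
Your proposal is correct and matches the paper's argument exactly: the paper derives this corollary immediately from Lemma~\ref{submersion_submanifold} once Corollary~\ref{co:TBfiberspace} has established that $\C$ is a holomorphic submersion. Your additional unpacking of the local product chart is accurate but not needed, since the paper treats this as an immediate consequence.
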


\begin{corollary} The fibers $F_P$ are complex submanifolds of $\widetilde{T}^P(\riem^P)$.
\end{corollary}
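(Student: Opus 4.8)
\emph{Proof proposal.} The plan is to show that the projection $\mathcal{F}$ of (\ref{eq:Fdefinition}) is a holomorphic submersion; the claim then follows at once from Lemma \ref{submersion_submanifold}, since $F_P = \mathcal{F}^{-1}(y)$ for $y \in T(\riem^P)$. By Lemma \ref{submersion_section} (applied through charts on the complex Banach manifolds $\widetilde{T}^P(\riem^P)$ and $T(\riem^P)$) it suffices to produce a local holomorphic section of $\mathcal{F}$ through an arbitrary point $q = [\riem^P,f,\riem_1^P,\phi] \in \widetilde{T}^P(\riem^P)$.

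I would build this section by transporting the section of $\mathcal{C}$ given by Theorem \ref{th:Schiffer_section} through the local biholomorphism $\PDB$. First, since $\PDB$ factors as $G$ composed with the quotient $T(\riem^B) \to T(\riem^B)/\DB$ and $G$ is a bijection (Remark \ref{re:TtildePTBbiholomorphism}), $\PDB$ is surjective; choose $w \in T(\riem^B)$ with $\PDB(w) = q$ and, by Theorem \ref{th:sewing_summary}(2), an open neighborhood $U$ of $w$ on which $\PDB$ restricts to a biholomorphism onto an open subset of $\widetilde{T}^P(\riem^P)$. By Theorem \ref{th:Schiffer_section} the sewing map $\mathcal{C}$ has a local holomorphic section $\eta$ with $\eta(\mathcal{C}(w)) = w$ and $\mathcal{C} \circ \eta = \id$ on a neighborhood of $\mathcal{C}(w)$ in $T(\riem^P)$; note $\mathcal{C}(w) = \mathcal{F}(\PDB(w)) = \mathcal{F}(q)$ by Theorem \ref{th:sewing_summary}(3). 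Shrinking the domain of $\eta$ by continuity, I may assume $\eta$ takes values in $U$. Then $s := \PDB \circ \eta$ is holomorphic, $\mathcal{F} \circ s = (\mathcal{F} \circ \PDB) \circ \eta = \mathcal{C} \circ \eta = \id$, and $s(\mathcal{F}(q)) = \PDB(\eta(\mathcal{C}(w))) = \PDB(w) = q$, so $s$ is the desired local section of $\mathcal{F}$ through $q$.

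There is no genuine obstacle: all the needed ingredients are already in hand. The only point requiring care is that $\PDB$ is merely a \emph{local} biholomorphism, so one must track neighborhoods carefully — in particular arrange that the section $\eta$ of $\mathcal{C}$ lands inside the neighborhood $U$ on which $\PDB$ is invertible — and note that Lemma \ref{submersion_section}, though stated for Banach spaces, is being invoked in local charts on the relevant Banach manifolds.

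Alternatively, one can avoid sections entirely: for $q$ as above with $y = \mathcal{F}(q)$, the identity $\PDB^{-1}\bigl(\mathcal{F}^{-1}(y)\bigr) = (\mathcal{F}\circ\PDB)^{-1}(y) = \mathcal{C}^{-1}(y) = F_B$ shows that, on the neighborhood $U$ where $\PDB$ is a biholomorphism, $\PDB$ carries $F_B \cap U$ — a complex submanifold of $U$ by Corollary \ref{co:FBcomplexsubmanifold} — biholomorphically onto an open piece of $F_P$; hence $F_P$ is a complex submanifold near $q$, and $q$ was arbitrary. I would present the section argument in the main text, as it parallels the treatment of $\mathcal{C}$, and record this direct argument as a remark.
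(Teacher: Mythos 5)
Your main argument is exactly the paper's: $\mathcal{F}$ is holomorphic by Theorem \ref{th:sewing_summary}, and composing $\PDB$ with a local holomorphic section of $\mathcal{C}$ (from Theorem \ref{th:Schiffer_section}) yields a local holomorphic section of $\mathcal{F}$, so Lemmas \ref{submersion_section} and \ref{submersion_submanifold} apply. You are somewhat more careful than the paper's one-line proof (and in fact you need not restrict $\eta$ to land in $U$ — $\PDB \circ \eta$ is a section through $q$ as soon as $\eta$ passes through $w$, since $\PDB$ is globally defined and holomorphic), and your alternative remark transporting $F_B$ by the local biholomorphism $\PDB$ is a valid and pleasant shortcut, but the route you choose to present matches the paper.
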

\begin{proof} From Theorem \ref{th:sewing_summary} $\mathcal{F}$ is holomorphic.  It possesses local holomorphic sections, since if $\rho:T(\riem^P) \rightarrow T(\riem^B)$ is a local holomorphic section
of $\mathcal{C}$, then $\PDB \circ \rho$ is a local holomorphic section of $\mathcal{F}$.
\end{proof}

\end{subsection}
\end{section}
\begin{section}{Local model of fibers}

\begin{subsection}{Complex structure on $\Oqc(\riem^P_1)$}
\label{se:CstructureOqc}
  The authors defined a complex structure on $\Oqc(\riem^P_1)$ in
  \cite{RSOqc},
  with the help of a model of the universal Teichm\"uller curve due
  to Teo \cite{Teo}.
  We outline the definition of the complex structure in this section.

  The complex structure $\Oqc(\riem^P_1)$ is locally biholomorphic to $\Oqc^n = \Oqc \times \cdots \times \Oqc$.  So we must first describe the complex structure on $\Oqc$.
 To do this, we define an injection of $\mathcal{O}_{qc}$ onto an
 open
 subset of a Banach space.  Consider the Banach space
 \[  A^1_\infty(\mathbb{D}) = \{ v(z): \mathbb{D} \rightarrow
 \mathbb{C} \;|\; v \text{ holomorphic}, \ ||v||_{1,\infty}=
 \sup_{z \in \mathbb{D}}
 (1- |z|^2) | v(z)| < \infty \}.  \]
 For $f \in \Oqc$ define
 \[  \mathcal{A}(f) = \frac{f''(z)}{f'(z)}.  \]
 We then have a natural one-to-one map
\begin{align}
\label{chi}
 \chi : \mathcal{O}_{qc} & \rightarrow  A^1_\infty(\mathbb{D})
 \oplus \mathbb{C}  \\
 f & \mapsto  \left( \mathcal{A}(f), f'(0) \right), \nonumber
\end{align}
where $A^1_\infty(\mathbb{D})\oplus \mathbb{C}$  is a Banach space with the direct sum norm
\[   ||(\phi,c)||=||\phi||_{1,\infty} +
|c|.  \] It was shown in \cite{RSOqc} that the image of $\Oqc$ is
open and thus $\Oqc$ inherits a complex structure from
$A^1_\infty(\mathbb{D})\oplus \mathbb{C}$.

 We now describe the complex structure on $\Oqc(\riem^P_1)$  in terms of
 local charts into $\Oqc^n$.
 Fix a point $(\phi_1,\ldots,\phi_n) \in \Oqc(\riem^P_1)$.
 For $i = 1,\dots,n$, let $D_i=\phi_i(\mathbb{D})$ be open sets in $\riem^P_1$.  Choose domains $B_i \subset \riem^P_1$ with the following properties: 1)
 $\phi_i(\overline{\mathbb{D}}) \subset B_i$, 2) $B_i \cap B_j = \emptyset$ for $i \neq j$ and 3) $B_i$ are open and simply connected.
 Let $\zeta_i:B_i \rightarrow \mathbb{C}$ be a local biholomorphic
 parameter such that $\zeta_i(p_i)=0$.  We then have that $\zeta_i
 \circ \phi_i \in \Oqc$.

 Let $K_i$ be a compact set, which is the closure of an open set
 containing $\zeta_i \circ
 \phi_i(\overline{\mathbb{D}})$,
 such that $\zeta_i^{-1}(K_i) \subset B_i$.
 By \cite[Corollary 3.5]{RSOqc}, for each $i$
 there is an open neighborhood $U_i$ of $\zeta_i \circ \phi_i$
 in $\Oqc$ such that $\psi_i(\overline{\mathbb{D}}) \subset K_i$ for
 all $\psi_i \in U_i$, and so $(\zeta_1^{-1} \circ \psi_1, \ldots,
 \zeta_n^{-1} \circ \psi_n)$ is an element of $\Oqc(\riem^P_1)$.

Thus, $U_1 \times \cdots \times U_n$ is an open subset of $\Oqc
\times \cdots \times \Oqc$ with the product topology.  Let
\[ V_i=\{ \,\zeta_i^{-1} \circ \psi_i \,|\, \psi_i \in U_i \,\};  \]
then sets of the form $V=V_1\times \cdots \times V_n$ form a base
of the topology of $\Oqc(\riem_1^P)$.  Let
\begin{align*}
 T_i: V_i  & \rightarrow \Oqc \\
 g & \mapsto \zeta_i \circ g.
\end{align*}
and note that $T_i(V_i)=U_i$. The local coordinates
\[  T=(T_1,\ldots,T_n):V \rightarrow U   \]
define a complex Banach manifold structure on $\Oqc(\riem^P_1)$.

\begin{remark}
\label{re:Tdomain}
In fact, fixing $B_i$ and $\zeta_i$, $i=1,\ldots,n$, $T$ is a
 valid chart on the set of {\it all} elements of $\Oqc(\riem^P_1)$ which map all $n$ copies of the disk into the corresponding set
 $B_i$.

 Let
 $U_i=\{ \psi_i \in \Oqc \st \psi_i(\overline{\mathbb{D}}) \subset \zeta_i(B_i) \}$ and observe that $U=U_1 \times \cdots \times U_n$ is open by \cite[Corollary 3.5]{RSOqc}.  Furthermore, the set $V=\{ \phi \in \Oqc(\riem_1^P) \st  \phi_i(\overline{\mathbb{D}}) \subset B_i \}$ is open since every point is contained in an open set of the form described above.  The map $T$ is clearly a biholomorphism on all of $V$. Hence, $(T,V)$ is a local coordinate.
\end{remark}

\end{subsection}

\begin{subsection}{The biholomorphism $L:\Oqc(\riem_*^P) \rightarrow F_P([\riem^P,f,\riem_*^P])$}

Fix a point $[\riem^P, f, \riem^P_*] \in T(\riem^P)$ and recall from Section \ref{se:fibration} that $F_P([\riem^P,f,\riem_*^P]) \subset \widetilde{T}^P(\riem^P)$ is the fiber over this point.
Define the map
\begin{align*}
 L : \Oqc(\riem_*^P) & \rightarrow F_P([\riem^P,f,\riem_*^P]) \\
 \phi & \mapsto  [\riem^P,f,\riem_*^P,\phi] .
\end{align*}

Our goal is to prove
\begin{theorem}
\label{th:Lbiholomorphism}
The map $L : \Oqc(\riem^P_*) \to F_P([\riem^P,f,\riem^P_*])$ is a biholomorphism.
\end{theorem}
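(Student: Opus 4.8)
The plan is to prove that $L$ is a bijection first, and then promote it to a biholomorphism by exhibiting holomorphic local coordinate expressions. For bijectivity, note that $L$ is clearly well-defined: $[\riem^P,f,\riem^P_*,\phi]$ lies in $\mathcal{F}^{-1}([\riem^P,f,\riem^P_*])$ by the definition of $\mathcal{F}$ in \eqref{eq:Fdefinition}. Surjectivity and injectivity amount to showing that two quadruples $(\riem^P,f,\riem^P_*,\phi_1)$ and $(\riem^P,f,\riem^P_*,\phi_2)$ with the \emph{same} underlying marked surface $[\riem^P,f,\riem^P_*]$ are equivalent in $\widetilde{T}^P(\riem^P)$ if and only if $\phi_1 = \phi_2$. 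Unwinding the equivalence relation, the biholomorphism $\sigma : \riem^P_* \to \riem^P_*$ must satisfy that $f^{-1}\circ\sigma\circ f$ is homotopic to the identity, i.e. $[\riem^P,f,\riem^P_*] = [\riem^P, \sigma\circ f,\riem^P_*]$; but since $\riem^P_*$ has no non-trivial automorphisms homotopic to the identity (the standing assumption $2g-2+n>0$), this forces $\sigma = \id$, and then the condition $\sigma\circ\phi_1 = \phi_2$ on $\partial\disk$ (hence on $\disk$ by analytic continuation, using that both are conformal) gives $\phi_1 = \phi_2$. So $L$ is a bijection, and any $[\riem^P,f,\riem^P_*,\phi] \in F_P$ has a representative with underlying marked surface exactly $[\riem^P,f,\riem^P_*]$.

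Next I would establish holomorphy in both directions using the complex structures already in hand. The fiber $F_P$ is a complex submanifold of $\widetilde{T}^P(\riem^P)$ (last corollary of Section 2), whose complex structure is transported from $T(\riem^B)/\DB$ via $G$, and in turn is accessed through the local biholomorphism $\PDB$ and the fundamental projection $\Phi$ from Beltrami differentials. The complex structure on $\Oqc(\riem^P_*)$ is the one from Section \ref{se:CstructureOqc}, with charts $T = (T_1,\dots,T_n)$ into $\Oqc^n$, and $\Oqc$ itself sits in $A^1_\infty(\disk)\oplus\mathbb{C}$ via $\chi$. The strategy is to produce, near a given $\phi^0 \in \Oqc(\riem^P_*)$, an explicit holomorphic family of Beltrami differentials $\mu_\phi$ on $\riem^B$ (equivalently, a holomorphic curve/section into $T(\riem^B)$) whose image under $\PDB$, followed by $G^{-1}$-type identification, is $[\riem^P,f,\riem^P_*,\phi]$; holomorphic dependence of $L$ on $\phi$ then follows from Theorem \ref{th:Phiinducescomplexstructure} and the holomorphy of $\PDB$. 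For the inverse, I would read off $\phi$ from a point $[\riem^P,f,\riem^P_*,\phi]$ in the local coordinates on $\widetilde{T}^P$ coming from $\PDB$, using the partial description of $G^{-1}$ in Remark \ref{re:TtildePTBbiholomorphism}: restricted to the fiber, the data that varies is precisely the rigging $\phi$, and the sewing/welding construction that recovers $\phi$ from $\tilde h\circ\tauP$ is holomorphic by the main results of \cite{RS05}. Since a bijective holomorphic map between complex Banach manifolds with holomorphic inverse is a biholomorphism, this completes the proof.

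The main obstacle, as flagged in Remark \ref{re:PDB_no_inverse}, is that $\PDB$ (and $G$) has no explicit inverse, so I cannot simply write down $L^{-1}$ and differentiate. The way around this is to work locally and use the characterization of the complex structure rather than an explicit formula: it suffices to check that $L$ pulled back to the model spaces $\Oqc^n$ and to a chart of $T(\riem^B)$ (via $\Phi$ and $\PDB$) is holomorphic, and that its image is open in $F_P$ with a holomorphic local inverse. Concretely, the key technical step will be constructing, for each $\phi$ in a coordinate neighborhood $V = V_1\times\cdots\times V_n \subset \Oqc(\riem^P_*)$, a Beltrami differential on $\riem^B$ depending holomorphically on the chart coordinates $T_i(\phi_i) = \zeta_i\circ\phi_i \in \Oqc$ — this is essentially the ``restriction of the inverse of $\PDB$ to specified holomorphic curves'' alluded to in Remark \ref{re:PDB_no_inverse}. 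One builds this by taking a conformal welding / quasiconformal interpolation between the base rigging and $\phi$ supported in collar neighborhoods of the boundary curves of $\riem^B$, and invokes the holomorphic dependence of solutions of the Beltrami equation (the $\lambda$-lemma argument of \cite[Sections 4, 5.4]{RS05}) together with the holomorphy of sewing in Teichm\"uller space. Once this family is in place, everything else is bookkeeping with the already-established holomorphy of $\PDB$, $\mathcal{F}$, $\Phi$, and the chart maps $T_i$ and $\chi$.
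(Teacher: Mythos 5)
Your bijection argument and your plan for proving that $L$ itself is holomorphic match the paper's strategy closely: the injectivity uses the absence of nontrivial automorphisms homotopic to the identity (exactly Lemma \ref{le:Lbijection}), and the idea of lifting a complex line in the $\chi$-coordinates on $\Oqc$ to a holomorphic family of Beltrami differentials supported in collar neighborhoods, using the $\lambda$-lemma and the holomorphy of $\PDB$ and $\Phi$, is essentially Lemma \ref{le:Lholo}.

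The gap is in your treatment of the inverse. You write that "the sewing/welding construction that recovers $\phi$ from $\tilde h \circ \tauP$ is holomorphic by the main results of \cite{RS05}," but this is precisely the assertion that needs proving, and it does not follow from the cited results; indeed Remark \ref{re:PDB_no_inverse} says explicitly that $\PDB^{-1}$ cannot be written down, so the "recovery of $\phi$" you appeal to has no explicit formula to check holomorphy against. The paper bridges this by constructing a genuinely new map $\Lambda : A_B \to \Oqc(\riem^P_*)$ (equation (\ref{eq:Lambdadef})) using a fixed Fuchsian model $\riem_G = \mathbb{U}/G$ and the normalized solution $w^\mu$ of the Beltrami equation: $\Lambda = \sigma \circ f^\mu \circ \alpha \circ \tauP$ where $\mu = \mu(\tilde h \circ \alpha^{-1})$. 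One then has to prove this is well-defined on Teichm\"uller classes (Proposition \ref{pr:LambdaWellDef}), that it inverts $\PDB^{-1}\circ L$ after restricting to holomorphic curves and using Lemma \ref{sigmadef} to pin down the normalizing biholomorphism $\sigma$ (Theorem \ref{th:LambdaisLinv}), and finally that $\Lambda$ is holomorphic (Theorem \ref{th:Lambdaholo}). This last step is itself nontrivial: it uses weak holomorphy tested against the separating family of point-evaluation functionals on $A^1_\infty(\disk)$ (Theorem \ref{th:weak_holo}) for G\^ateaux holomorphy, and the Schwarz lemma plus a standard univalent-function estimate to get local boundedness of $\chi \circ T \circ \Lambda$. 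None of these ingredients is present in your sketch, so as written your argument for $L^{-1}$ holomorphic is not a proof but a restatement of the goal. Your framing of the obstacle and the general strategy is correct; what is missing is the actual construction that makes the strategy work.
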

\begin{proof}
We will prove this in three stages: first, we show that $L$ is a
bijection onto $F_P$ (Lemma \ref{le:Lbijection} ahead), second that $L$ is holomorphic (Lemma \ref{le:Lholo} ahead).  Lastly, we show that the inverse is holomorphic.

To show that the inverse is holomorphic,  in Section \ref{se:Lambda} we will construct near any point in $T(\riem^B)$ a local inverse $\Lambda$ to the lift $\PDB^{-1} \circ L$ where $\PDB^{-1}$ is a local inverse of $\PDB$.  By Theorem \ref{th:sewing_summary} it is enough to show that $\Lambda$ is holomorphic, which we will establish in Theorem \ref{th:Lambdaholo} ahead.
\end{proof}

Theorem \ref{th:Lbiholomorphism} has the following consequences.
\begin{corollary}
\label{co:OqcFBbiholo}
Fix any $[\riem^P,f,\riem_*^P] \in T(\riem^P)$.
The map
\begin{align*}
\Oqc(\riem_*^P) & \to F_B([\riem^P,f,\riem_*^P])/ \DB \\
\phi & \mapsto [\riem^B, f_{\phi}, \riem_*^P \setminus \phi(\disk_0^n)]
\end{align*}
is a biholomorphism,
where $f_{\phi}$ is the restriction of a map $f'_{\phi}: \riem^P \to \riem^P_1$ which is homotopic to $f$ and satisfies $f'_{\phi} \circ \tauP = \phi$.
\end{corollary}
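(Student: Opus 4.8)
The plan is to realize the map of the corollary as the composition $G^{-1}\circ L$ of the biholomorphism $L$ of Theorem~\ref{th:Lbiholomorphism} with the inverse of the biholomorphism $G$ of Remark~\ref{re:TtildePTBbiholomorphism}, restricted to the relevant fibers. Recall from Section~\ref{se:fibration} that $G:T(\riem^B)/\DB\to\widetilde{T}^P(\riem^P)$ is a global biholomorphism; that $\C$ is $\DB$-invariant (since $\C=\mathcal{F}\circ\PDB$ and $\PDB$ is $\DB$-invariant), hence descends to a map $\overline{\C}$ on $T(\riem^B)/\DB$ with $\mathcal{F}\circ G=\overline{\C}$; and that consequently $G$ maps $F_B([\riem^P,f,\riem_*^P])/\DB$ bijectively onto the fiber $F_P([\riem^P,f,\riem_*^P])\subset\widetilde{T}^P(\riem^P)$.

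First I would check that $F_B/\DB$ is a complex submanifold of $T(\riem^B)/\DB$. By Corollary~\ref{co:FBcomplexsubmanifold} the fiber $F_B$ is a complex submanifold of $T(\riem^B)$; by Proposition~\ref{DBaction}, $\DB$ acts on $T(\riem^B)$ properly discontinuously and fixed-point freely by biholomorphisms, and since it preserves $F_B$ the restricted action on $F_B$ retains these properties. Hence $F_B/\DB$ carries a unique complex structure for which $F_B\to F_B/\DB$ is a local biholomorphism, and $F_B/\DB\hookrightarrow T(\riem^B)/\DB$ is a complex-submanifold embedding. Since a biholomorphism of complex Banach manifolds restricts to a biholomorphism between corresponding complex submanifolds, $G$ restricts to a biholomorphism $F_B/\DB\to F_P([\riem^P,f,\riem_*^P])$ (both being complex submanifolds of the respective total spaces), and so does its inverse.

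It then remains to identify $G^{-1}|_{F_P}\circ L$ with the stated map. For $\phi\in\Oqc(\riem_*^P)$ one has $L(\phi)=[\riem^P,f,\riem_*^P,\phi]\in F_P$, and by the explicit formula for $G^{-1}$ recalled in Remark~\ref{re:TtildePTBbiholomorphism},
\[
G^{-1}\bigl([\riem^P,f,\riem_*^P,\phi]\bigr)=[\riem^B,\,f_\phi,\,\riem_*^P\setminus\phi(\disk_0^n)],
\]
where $f_\phi$ is the restriction of a quasiconformal $f'_\phi:\riem^P\to\riem^P_*$ homotopic to $f$ with $f'_\phi\circ\tauP=\phi$ on $\partial\disk$; this is precisely the map in the corollary, and it is a composition of two biholomorphisms. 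I do not expect a genuine analytic obstacle, since the substance has already been placed in Theorem~\ref{th:Lbiholomorphism} and in the results of \cite{RS05} behind $G$; the step needing the most care is purely formal, namely verifying that the quotient complex structure on $F_B/\DB$ agrees with the one it inherits as $G^{-1}(F_P)$. This holds because $G$ intertwines the quotient projection $T(\riem^B)\to T(\riem^B)/\DB$ with the local biholomorphism $\PDB$ (Theorem~\ref{th:sewing_summary}), and because the representative $f_\phi$ in the formula for $G^{-1}$ is well defined modulo the $\DB$-action by \cite[Theorems 5.5 and 5.6]{RS05}.
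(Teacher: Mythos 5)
Your proposal is correct and follows essentially the same route as the paper, whose proof is simply the observation that the result follows from Theorem~\ref{th:Lbiholomorphism} together with the explicit form of $G^{-1}$ in Remark~\ref{re:TtildePTBbiholomorphism}. What you add — that $\DB$ preserves $F_B$ and acts freely and properly discontinuously on it so that $F_B/\DB$ is a complex submanifold of $T(\riem^B)/\DB$, and that $G$ therefore restricts to a biholomorphism $F_B/\DB\to F_P$ — is just an unpacking of the formal content the paper leaves implicit.
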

\begin{proof}
  This follows from Theorem \ref{th:Lbiholomorphism} and Remark
  \ref{re:TtildePTBbiholomorphism}.
\end{proof}

\begin{corollary}
 The fibers $F_B([\riem^P,f,\riem_{1}^P])$ and $F_P([\riem^P,f,\riem_{1}^P])$ are
 locally biholomorphic to $\Oqc^n$ for any $[\riem^P,f,\riem_{1}^P]$ via the maps $ T \circ L^{-1} \circ \PDB$ and $T \circ L^{-1}$ respectively.
\end{corollary}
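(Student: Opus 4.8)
The plan is to obtain both statements by composing biholomorphisms that are already in hand: the local charts $T=(T_1,\ldots,T_n)$ on $\Oqc(\riem_1^P)$ from Section~\ref{se:CstructureOqc}, the global biholomorphism $L^{-1}$ of Theorem~\ref{th:Lbiholomorphism}, and the local biholomorphism $\PDB$ of Theorem~\ref{th:sewing_summary}. (Here we identify the surface $\riem_1^P$ of the statement with the surface called $\riem_*^P$ in Theorem~\ref{th:Lbiholomorphism}.) First I would dispose of $F_P$. By Theorem~\ref{th:Lbiholomorphism} the map $L^{-1}\colon F_P([\riem^P,f,\riem_1^P])\to\Oqc(\riem_1^P)$ is a biholomorphism, and by the construction recalled in Section~\ref{se:CstructureOqc} every point of $\Oqc(\riem_1^P)$ lies in a chart domain $V=V_1\times\cdots\times V_n$ on which $T$ is a biholomorphism onto an open subset $U\subseteq\Oqc^n$. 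Since $L(V)$ is open in $F_P$, the composite $T\circ L^{-1}$ restricts to a biholomorphism of $L(V)$ onto $U$; letting the base point and the chart vary, these domains cover $F_P$, so $F_P$ is locally biholomorphic to $\Oqc^n$ via $T\circ L^{-1}$.

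Next I would transport this to $F_B$ using $\PDB$. From $\mathcal{F}\circ\PDB=\mathcal{C}$ (Theorem~\ref{th:sewing_summary}) one gets that $\PDB$ sends $F_B=\mathcal{C}^{-1}([\riem^P,f,\riem_1^P])$ into $F_P=\mathcal{F}^{-1}([\riem^P,f,\riem_1^P])$. Fix $w\in F_B$ and choose, by Theorem~\ref{th:sewing_summary}, an open $U\ni w$ on which $\PDB$ is a biholomorphism onto the open set $\PDB(U)\subseteq\widetilde{T}^P(\riem^P)$; I may shrink $U$ so that $\PDB(U)$ meets $F_P$ inside one chart domain $L(V)$. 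I would then check that $\PDB$ restricts to a biholomorphism of $F_B\cap U$ onto $F_P\cap\PDB(U)$: injectivity is inherited from $\PDB|_U$; for surjectivity, if $y=\PDB(x)\in F_P$ with $x\in U$ then $\mathcal{C}(x)=\mathcal{F}(\PDB(x))=[\riem^P,f,\riem_1^P]$, so $x\in F_B$; and both this restriction and its inverse are holomorphic, being the restrictions of the holomorphic maps $\PDB$ and $(\PDB|_U)^{-1}$ to the complex submanifolds $F_B$ and $F_P$ (Corollary~\ref{co:FBcomplexsubmanifold} and the fact that the fibers $F_P$ are complex submanifolds of $\widetilde{T}^P(\riem^P)$). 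Composing with the local biholomorphism $T\circ L^{-1}$ of the first paragraph shows that $T\circ L^{-1}\circ\PDB$ is a biholomorphism of a neighborhood of $w$ in $F_B$ onto an open subset of $\Oqc^n$, and since $w$ was arbitrary, $F_B$ is locally biholomorphic to $\Oqc^n$ via $T\circ L^{-1}\circ\PDB$.

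I do not anticipate a real obstacle: the corollary is a formal corollary of results already proved, essentially an assembly of maps. The one place that needs genuine care is verifying that the restriction of $\PDB$ to the submanifold $F_B$ is a \emph{local biholomorphism onto} $F_P$ rather than merely a holomorphic injection with possibly smaller image; this is exactly where the identity $\mathcal{F}\circ\PDB=\mathcal{C}$ together with the submanifold statements (Corollary~\ref{co:FBcomplexsubmanifold} and its successor) are used, as indicated above.
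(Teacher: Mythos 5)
The proposal is correct; the paper states this corollary without a written proof, treating it as immediate from Theorem~\ref{th:Lbiholomorphism}, Theorem~\ref{th:sewing_summary}, and Corollary~\ref{co:FBcomplexsubmanifold}. Your argument supplies exactly the assembly the paper leaves implicit, with appropriate care at the one delicate point you yourself flag (that $\PDB$ restricts to a local biholomorphism of $F_B$ \emph{onto} an open subset of $F_P$, via $\mathcal{F}\circ\PDB=\mathcal{C}$ and the submanifold structure of the fibers).
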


In the next few sections we require some facts regarding infinite-dimensional
holomorphy (see for example  \cite{Chae}, \cite[V.5.1]{Lehto} or
\cite[Section 1.6]{Nag}). Let $E$ and $F$ be Banach spaces and let
$U$ be an open subset of $E$.
\begin{definition}  A map $f \colon U \rightarrow F$ is holomorphic if
 for each $x_0 \in U$ there is a continuous complex linear map
 $Df(x_0) \colon E \rightarrow F$ such that
 \[  \lim_{h \rightarrow 0} \frac{|| f(x_0+h)-f(x_0) -
 Df(x_0)(h)||_F}{||h||_E} =0.  \]
\end{definition}
\begin{definition}
A map $f  \colon U \to F$ is called G\^ateaux holomorphic if $f$ is
holomorphic on complex lines. That is, if for all $a \in U$ and all
$x \in E$, the map $z \mapsto  f(a+zx)$ is holomorphic on $\{z \in
\mathbb{C} \st a + zx \in U \}$.
\end{definition}

\begin{theorem}[{\cite[p 198]{Chae}}]
\label{th:holo_equivalents} Let $f  \colon U \to F$. The following are
equivalent.
\begin{enumerate}
\item $f$ is holomorphic.
\item $f$ is G\^{a}teaux-holomorphic and continuous.
\item $f$ is G\^{a}teaux-holomorphic and locally bounded on $U$.
\end{enumerate}
\end{theorem}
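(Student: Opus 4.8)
The plan is to prove the cyclic chain $(1) \Rightarrow (2) \Rightarrow (3) \Rightarrow (1)$; the first two implications are routine and the third carries all the weight.

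For $(1) \Rightarrow (2)$: a Fr\'echet-differentiable map is continuous at each point of $U$, and for fixed $a \in U$ and $x \in E$ the chain rule shows that $\lambda \mapsto f(a + \lambda x)$ is complex-differentiable on the open set $\{\lambda \in \mathbb{C} \st a + \lambda x \in U\}$ with derivative $Df(a + \lambda x)(x)$; hence $f$ is G\^ateaux-holomorphic. The implication $(2) \Rightarrow (3)$ is immediate: a map continuous at $x_0$ sends some ball about $x_0$ into a bounded subset of $F$.

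The substance is $(3) \Rightarrow (1)$. Fix $x_0 \in U$ and, using local boundedness, choose $r > 0$ and $M < \infty$ with $\overline{B(x_0, 2r)} \subset U$ and $\|f\| \le M$ on $B(x_0, 2r)$. For $\|h\| < r$, G\^ateaux holomorphy makes $g_h(\lambda) := f(x_0 + \lambda h)$ an $F$-valued holomorphic function of one complex variable on the disc $|\lambda| < 2r/\|h\|$, which contains the closed disc $|\lambda| \le 2$, with $\|g_h\| \le M$ there. The Cauchy integral formula for Banach-space-valued holomorphic functions of one variable --- which follows from the scalar case by the Hahn--Banach theorem, or directly --- gives, upon expanding $(\lambda - 1)^{-1}$ in powers of $\lambda^{-1}$ on $|\lambda| = 2$,
\[
 f(x_0 + h) = g_h(1) = \sum_{n \ge 0} P_n(h), \qquad
 P_n(h) = \frac{1}{2\pi i} \int_{|\lambda| = 2} \frac{f(x_0 + \lambda h)}{\lambda^{n+1}} \, d\lambda .
\]
A rescaling of the integration contour shows that each $P_n$ is a continuous $n$-homogeneous polynomial on $E$ (equivalently, the restriction to the diagonal of a bounded symmetric $n$-linear map), and the Cauchy estimate then yields $\|P_n(h)\| \le M(\|h\|/r)^n$ for $\|h\| < r$. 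Consequently $\sum_{n \ge 2} P_n(h) = O(\|h\|^2)$ near $0$; since $P_0 = f(x_0)$ and $P_1 \colon E \to F$ is continuous and linear, we conclude $Df(x_0) = P_1$ and that $f$ is holomorphic at $x_0$. As $x_0 \in U$ was arbitrary, $f$ is holomorphic.

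The main obstacle is the $(3) \Rightarrow (1)$ step, and within it two technical points: (a) setting up the vector-valued one-variable Cauchy theory (Cauchy's formula and the Cauchy estimates for $F$-valued holomorphic functions), and (b) verifying that the Taylor coefficients $P_n$ are genuine continuous homogeneous polynomials, so that the power series may be differentiated term by term to identify $Df(x_0)$. Both use the local boundedness hypothesis in an essential way --- it is what upgrades pointwise (G\^ateaux) holomorphy along lines to a uniformly convergent expansion on a whole ball. Since Theorem \ref{th:holo_equivalents} is a classical result of infinite-dimensional holomorphy, in practice one simply quotes it, as is done here via \cite{Chae}.
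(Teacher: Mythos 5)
The paper does not prove this statement; it cites it as a textbook fact from Chae. Your sketch is a correct account of the standard argument, and it is essentially the proof found in Chae and in other references on infinite-dimensional holomorphy. The two easy implications are as you describe. For $(3)\Rightarrow(1)$, your Cauchy-integral construction of the Taylor coefficients $P_n$ and the estimate $\|P_n(h)\|\le M(\|h\|/r)^n$ (obtained by taking the contour radius close to $r/\|h\|$) are exactly the key steps, and you correctly flag the one place that needs more than hand-waving: showing that each $P_n$ is a genuine $n$-homogeneous polynomial, i.e.\ the diagonal of a bounded symmetric $n$-linear map. That step requires the polarization identity, which in turn uses the fact that G\^ateaux holomorphy plus local boundedness gives joint holomorphy on finite-dimensional affine slices (a Hartogs-type argument); in particular this is what makes $P_1$ additive and not merely $1$-homogeneous, which you need in order to identify $P_1 = Df(x_0)$. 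Since the paper simply quotes \cite{Chae}, there is nothing in the paper to diverge from; your proposal faithfully reproduces the classical proof behind the citation.
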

A subset $H$ of the (continuous) dual space $F'$ is called \textit{separating} if for all non-zero $x \in F$ there exists $\alpha \in H$ such that $\alpha(x) \neq 0$.
The following theorem gives another characterization of
holomorphicity. See \cite{Grosse} for a statement in a more general
setting.
\begin{theorem}
\label{th:weak_holo}
Let $f : \Omega \to F$ be a
function on a domain $\Omega$ in $\Bbb{C}$.
If
\begin{enumerate}
\item $\alpha \circ f $ is holomorphic for each continuous linear functional
$\alpha$ from a separating subset of the dual space $F'$, and
\item $f$ is locally bounded,
\end{enumerate}
then $f$ is holomorphic.
\end{theorem}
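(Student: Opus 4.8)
The plan is to show that $f$ is holomorphic on a neighbourhood of each point of $\Omega$, so fix $z_0 \in \Omega$ and, using hypothesis (2), choose $R>0$ with $\overline{D(z_0,R)} \subset \Omega$ and $M := \sup_{|\zeta - z_0| \le R} \| f(\zeta) \|_F < \infty$. Let $H \subset F'$ denote the separating subset appearing in the hypothesis, and write $C_R = \{\,\zeta : |\zeta - z_0| = R\,\}$. The strategy is to produce the Cauchy transform
\[
  g(z) \;=\; \frac{1}{2\pi i} \int_{C_R} \frac{f(\zeta)}{\zeta - z}\, d\zeta , \qquad |z - z_0| < R,
\]
as a genuine $F$-valued holomorphic function, and then to check that $g = f$ near $z_0$ by testing against the functionals in $H$.

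The first task is to make sense of the integral. Since $\alpha \circ f$ is holomorphic for every $\alpha \in H$, it is in particular continuous, so $f$ restricted to the compact circle $C_R$ is continuous into $F$ equipped with the $\sigma(F,H)$-topology, and $f(C_R)$ is $\sigma(F,H)$-compact and norm-bounded by $M$. A Pettis-type argument then shows that $\zeta \mapsto f(\zeta)/(\zeta - z)$ is integrable over $C_R$ for each $z \in D(z_0,R)$, so $g(z) \in F$ is well defined. Because the kernel $1/(\zeta - z)$ is holomorphic in $z$ with $z$-derivatives bounded uniformly for $\zeta \in C_R$ and $z$ in a compact subset of $D(z_0,R)$, one may differentiate under the integral sign; hence $g$ is holomorphic on $D(z_0,R)$.

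It remains to identify $g$ with $f$. For each $\alpha \in H$, continuity and linearity of $\alpha$ allow it to be pulled inside the integral, giving
\[
  \alpha\big(g(z)\big) \;=\; \frac{1}{2\pi i} \int_{C_R} \frac{\alpha\big(f(\zeta)\big)}{\zeta - z}\, d\zeta \;=\; \alpha\big(f(z)\big),
\]
the second equality being the ordinary Cauchy integral formula for the scalar holomorphic function $\alpha \circ f$ (holomorphic on a neighbourhood of $\overline{D(z_0,R)}$ inside $\Omega$). Thus $\alpha\big(g(z) - f(z)\big) = 0$ for all $\alpha \in H$ and all $z \in D(z_0,R)$; since $H$ is separating, $f = g$ on $D(z_0,R)$, and $f$ is therefore holomorphic there. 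As $z_0 \in \Omega$ was arbitrary, $f$ is holomorphic on $\Omega$.

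The main obstacle is the integrability step: weak ($\sigma(F,H)$-) continuity does not by itself guarantee that $f|_{C_R}$ is regular enough (e.g.\ essentially separably valued) for the vector-valued integral to exist, and this is exactly where local boundedness must be used. One way to close the gap is to observe that the values of $f$ on $\overline{D(z_0,R)}$ lie in a fixed bounded absolutely convex set $B$, pass to the Banach space $F_B$ generated by $B$ (into which $f$ maps boundedly, with continuous inclusion $F_B \hookrightarrow F$), and there invoke the classical Dunford theorem that scalar holomorphy against the \emph{full} dual implies holomorphy --- its hypothesis being supplied by the weak-$\ast$ density of $\operatorname{span}(H)$ in $F_B'$ together with the uniform bound and a Vitali-type normal-families argument. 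Alternatively, one simply quotes \cite{Grosse}. Everything else reduces to the standard scalar Cauchy calculus.
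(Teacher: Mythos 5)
The paper does not actually prove Theorem~\ref{th:weak_holo}; it simply cites \cite{Grosse}, so there is no in-paper argument to compare yours against. Your strategy (form the vector Cauchy transform $g$ over $C_R$, then identify $g$ with $f$ by testing against $H$) is a natural and correct outline, \emph{provided} the integral exists, and you are right to flag the integrability step as the genuine obstacle.

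The trouble is that the patch you sketch does not close the gap. Knowing $\alpha\circ f$ is holomorphic (hence continuous) only for $\alpha$ in the separating set $H$ gives $\sigma(F,H)$-continuity of $f|_{C_R}$, which is strictly weaker than weak continuity against all of $F'$, and much weaker than the norm (or strong) measurability needed for a Bochner/Riemann integral. Passing to $F_B$ and invoking Dunford's theorem then requires exactly the statement that $\alpha\circ f$ is holomorphic for \emph{every} $\alpha\in F_B'$, and your justification --- ``weak-$*$ density of $\operatorname{span}(H)$ in $F_B'$ together with the uniform bound and a Vitali-type normal-families argument'' --- is where the real mathematical content lives and is asserted rather than proved. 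Separating only gives $\sigma(F',F)$-density of $\operatorname{span}(H)$; upgrading ``$\alpha\circ f$ holomorphic for $\alpha$ in a weak-$*$ dense subspace'' to ``$\alpha\circ f$ holomorphic for all $\alpha\in F'$'' requires showing that $S=\{\alpha\in F': \alpha\circ f\text{ holomorphic}\}$ is weak-$*$ closed, and the natural route (Krein--Smulian on the bounded parts of $S$, plus Vitali on bounded families) runs into the net-versus-sequence issue unless one is careful (e.g.\ reduces to a separable situation). This is precisely what \cite{Grosse} supplies, and ``simply quote \cite{Grosse}'' is of course not a proof of the theorem itself. In short, the skeleton of your argument is fine and your diagnosis of where it leaks is accurate, but the patch needs to be carried out rather than gestured at: you must genuinely prove that scalar holomorphy propagates from $H$ to $F'$ (or to a norming subspace), not just note that this would suffice.
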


We now turn to first step in the proof of Theorem \ref{th:Lbiholomorphism}.
\begin{lemma}
\label{le:Lbijection}
 $L$
is a bijection between $\Oqc(\riem_*^P)$ and $F_P([\riem^P, f,
\riem_*^P])$.
\end{lemma}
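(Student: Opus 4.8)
The plan is to unwind the definitions of $L$, of $\widetilde{T}^P(\riem^P)$, and of the fiber $F_P$, and check injectivity and surjectivity separately.

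\emph{Surjectivity.} Take an arbitrary element of $F_P([\riem^P,f,\riem_*^P])$; by definition it is a class $[\riem^P, g, \riem_1^P, \psi] \in \widetilde{T}^P(\riem^P)$ with $\mathcal{F}([\riem^P,g,\riem_1^P,\psi]) = [\riem^P,g,\riem_1^P] = [\riem^P,f,\riem_*^P]$. The latter equality means there is a biholomorphism $\sigma: \riem_1^P \to \riem_*^P$ (preserving punctures and ordering) with $f^{-1}\circ\sigma\circ g$ homotopic to the identity. Then push the rigging forward: set $\phi = \sigma\circ\psi \in \Oqc(\riem_*^P)$ --- this lies in $\Oqc(\riem_*^P)$ since $\sigma$ is a puncture-preserving biholomorphism, so composing with $\sigma$ preserves all four defining properties of Definition \ref{de:Oqcdefinition}. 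I then claim $L(\phi) = [\riem^P,f,\riem_*^P,\phi] = [\riem^P,g,\riem_1^P,\psi]$: the biholomorphism $\sigma$ itself witnesses the equivalence, since $\sigma\circ\psi = \phi$ (hence agreement on $\partial\disk$) and $f^{-1}\circ\sigma\circ g$ is homotopic to the identity. This gives surjectivity onto $F_P$.

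\emph{Injectivity.} Suppose $L(\phi_1) = L(\phi_2)$, i.e.\ $[\riem^P,f,\riem_*^P,\phi_1] = [\riem^P,f,\riem_*^P,\phi_2]$ in $\widetilde{T}^P(\riem^P)$. By the definition of the equivalence relation there is a biholomorphism $\sigma:\riem_*^P \to \riem_*^P$, preserving punctures and their ordering, with $\sigma\circ\phi_1 = \phi_2$ on $\partial\disk$ (hence on $\disk$) and $f^{-1}\circ\sigma\circ f$ homotopic to the identity. The point is that $\sigma$ must be the identity: $f^{-1}\circ\sigma\circ f \simeq \id$ forces $\sigma$ to be biholomorphic and homotopic to the identity on $\riem_*^P$, and by the standing assumption (the remark ruling out $2g-2+n\le 0$) the only such automorphism is the identity. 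Then $\phi_2 = \sigma\circ\phi_1 = \phi_1$ on $\disk$, so $\phi_1 = \phi_2$ as elements of $\Oqc(\riem_*^P)$.

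\emph{Main obstacle.} The one place requiring genuine care --- rather than bookkeeping --- is the step in the injectivity argument asserting that an automorphism of $\riem_*^P$ homotopic to the identity must be the identity. This is exactly where the hypothesis $2g-2+n>0$ enters; I would cite it via the standing remark, noting that for such surfaces a holomorphic automorphism homotopic to the identity (rel punctures, in the compact-surface-with-distinguished-points picture) is trivial, so the rigidity of Teichm\"uller markings applies. A secondary, purely routine point is verifying that $\phi\mapsto\sigma\circ\phi$ genuinely lands in $\Oqc$ --- conformality on $\disk$, the quasiconformal extension, the normalization $\phi_i(0)=p_i$, and the disjointness of images are each preserved because $\sigma$ is a global biholomorphism, but these should be stated explicitly.
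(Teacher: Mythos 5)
Your proposal is correct and follows essentially the same route as the paper: surjectivity by pushing the rigging forward along the biholomorphism $\sigma$ realizing $[\riem^P,g,\riem_1^P]=[\riem^P,f,\riem_*^P]$, and injectivity by invoking $2g-2+n>0$ to force the automorphism $\sigma$ of $\riem_*^P$ to be the identity. Your version is merely more explicit in unwinding the equivalence relations (the paper asserts surjectivity without detail and is terser about which clause of the $\widetilde{T}^P$ equivalence yields $\phi=\psi$), but the substance is identical.
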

\begin{proof}
By the definitions of $\widetilde{T}^P(\riem^P)$ and $T(\riem^P)$,
every element in $F_P$ has a representative of the form $[\riem^P,
\tilde{f}, \riem_*^P, \phi]$. $L$ is thus clearly a surjection.

Assume that $L(\phi) = L(\psi)$. If $\mathcal{F}([\riem^P, f,
\riem_*^P, \phi]) = \mathcal{F}([\riem^P, f, \riem_*^P, \psi])$, then
there exists a biholomorphism $\sigma : \riem_*^P \to \riem_*^P$
that is homotopic to the identity.  Since $2g-2+n >0$ there are no
non-trivial automorphisms which are homotopic to the identity. Thus
$\sigma$ is the identity and so $\phi = \psi$.
\end{proof}

\begin{remark} The bijection is not canonical since
$\Oqc(\riem_*^P)$
 depends on the choice of representative
in the Teichm\"uller equivalence class. However, if $[\riem^P, f_1,
\riem_{1}^P] = [\riem^P, f_2, \riem_{2}^P]$ and $\sigma :
\riem_{1}^P \to \riem_{2}^P$ is the biholomorphism realizing the
equivalence, then we have the bijection $\sigma^* :
\Oqc(\riem_{1}^P) \to \Oqc(\riem_{2}^P)$ defined by
$\sigma^*(\phi_1) = \sigma \circ \phi_1$, and $L_1 = L_2 \circ
\sigma^*$. It will follow from Theorem \ref{th:Lbiholomorphism} (once
the proof is complete) that $\Oqc(\riem_1^P)$ and $\Oqc(\riem_2^P)$
are biholomorphic under $\sigma^*$.
\end{remark}
\end{subsection}
\begin{subsection}{$L$ is holomorphic}
\begin{lemma}
\label{le:Lholo}
 $L$ is holomorphic.
\end{lemma}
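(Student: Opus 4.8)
The plan is to reduce the holomorphy of $L$ to holomorphy in the local coordinates on both sides: the chart $T = (T_1,\dots,T_n)$ on $\Oqc(\riem^P_*)$ described in Section \ref{se:CstructureOqc}, and the complex structure on the fiber $F_P$ inherited from $\widetilde{T}^P(\riem^P)$, which in turn comes (locally) from $T(\riem^B)$ via the local biholomorphism $\PDB$. Since $F_P$ is a complex submanifold of $\widetilde{T}^P(\riem^P)$ (shown in the previous section) and $\PDB$ is a local biholomorphism, it suffices to show that $\PDB^{-1}\circ L$ is holomorphic as a map from an open subset of $\Oqc(\riem^P_*)$ into $T(\riem^B)$, wherever a local inverse $\PDB^{-1}$ is defined. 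Composing further with the fundamental projection's local sections (Theorem \ref{th:Phiinducescomplexstructure}), it is enough to produce, for each $\phi_0 \in \Oqc(\riem^P_*)$, a holomorphically-varying family of Beltrami differentials $\mu_\phi$ on $\riem^B$ representing $\PDB^{-1}(L(\phi))$ for $\phi$ near $\phi_0$.

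First I would fix $\phi_0$ and use Remark \ref{re:TtildePTBbiholomorphism}: a representative of $\PDB^{-1}(L(\phi)) = G^{-1}([\riem^P,f,\riem^P_*,\phi])$ is $[\riem^B, f_\phi, \riem^P_* \setminus \phi(\disk_0^n)]$, where $f_\phi$ extends to a quasiconformal $f'_\phi:\riem^P\to\riem^P_1$ homotopic to $f$ with $f'_\phi \circ \tauP = \phi$ on $\partial\disk$. The key observation is that one can build such an $f'_\phi$ explicitly on a fixed reference neighborhood of the caps, interpolating between $\phi$ on $\partial\disk$ and a fixed map elsewhere, in a way that depends holomorphically on $\phi$ in the chart $T$ — here is where I would use the Teo-type model and Corollary 3.5 from \cite{RSOqc} to control the domains uniformly as in Remark \ref{re:Tdomain}. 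Concretely, near each puncture write $\zeta_i \circ \phi_i \in \Oqc$, lift to a conformal welding / quasiconformal extension, and transplant; off the caps take $\mu_\phi = 0$ relative to the fixed structure. The resulting Beltrami coefficient on $\riem^B$ should be a holomorphic function of $\phi$ into $L^\infty_{-1,1}(\riem^B)_1$ because the $\Oqc$-coordinate $(\mathcal{A}(\psi_i), \psi_i'(0))$ enters the explicit extension formula holomorphically.

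Given such a holomorphic family $\phi \mapsto \mu_\phi$, the composition $\Phi \circ (\phi\mapsto\mu_\phi) = \PDB^{-1}\circ L$ is holomorphic by Theorem \ref{th:Phiinducescomplexstructure}, and hence $L = \PDB \circ (\PDB^{-1}\circ L)$ is holomorphic. Alternatively, to avoid being fully explicit, I would use the weak-holomorphy criterion (Theorem \ref{th:weak_holo} together with Theorem \ref{th:holo_equivalents}): it suffices to check G\^ateaux holomorphy and local boundedness, i.e.\ holomorphy along each complex line $z \mapsto L(\phi_0 + z\,v)$ in the $T$-chart, which reduces to the one-variable holomorphic dependence of quasiconformal weldings on a complex parameter — and that in turn follows from holomorphic dependence of solutions of the Beltrami equation on parameters, exactly the mechanism underlying the holomorphy of sewing proved in \cite{RS05}.

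The main obstacle is the first step: producing the family $f'_\phi$ (equivalently $\mu_\phi$) with genuinely \emph{holomorphic}, not merely continuous, dependence on $\phi$, given that $\PDB^{-1}$ cannot be written explicitly (Remark \ref{re:PDB_no_inverse}). The cleanest route is probably not to invert $\PDB$ at all but to work on the $\widetilde{T}^P$ side directly: the map $\phi \mapsto [\riem^P,f,\riem^P_*,\phi]$ only changes the rigging, so one needs holomorphy of the complex structure on $\widetilde{T}^P(\riem^P)$ in the rigging variable, which was essentially the content of the $\Oqc(\riem^P_*)$-complex-structure construction in \cite{RSOqc} combined with the holomorphy of sewing in \cite{RS05}. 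I would therefore organize the proof around invoking those two results — holomorphic dependence of the sewn surface on the welding map (from \cite{RS05}) and the explicit chart $T$ on $\Oqc(\riem^P_*)$ (from \cite{RSOqc}) — and then checking that the identification map intertwines them, with the weak-holomorphy theorem as a safety net for the continuity/local-boundedness estimates.
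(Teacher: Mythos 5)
Your framework is right — lift $L$ through $\PDB^{-1}$ to $T(\riem^B)$, and reduce to G\^ateaux holomorphy plus local boundedness via Theorem~\ref{th:holo_equivalents} — and this is indeed the skeleton of the paper's argument. You also correctly pinpoint the hard step: producing a family of Beltrami coefficients on $\riem^B$ that depends \emph{holomorphically} on the rigging $\phi$. But you then leave precisely that step as a wish: ``the resulting Beltrami coefficient \ldots should be a holomorphic function of $\phi$ because the $\Oqc$-coordinate enters the explicit extension formula holomorphically.'' There is no explicit extension formula of this kind, and in general quasiconformal extensions of boundary maps do not vary holomorphically just because the boundary data does. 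That is the genuine gap.

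The paper fills it by reducing the problem to a single complex parameter $t$ (this is why G\^ateaux holomorphy is used rather than holomorphy in $\phi$ directly) and then invoking the theory of \emph{holomorphic motions}. Concretely: along a complex line $t\mapsto \chi(\psi_0) + t(v,c)$ in the chart one integrates the ODE $\psi_t''/\psi_t' = u_0 + tv$, $\psi_t'(0)=q_t$, to get the explicit formula~(\ref{psitequation}) for $\psi_t$, which is visibly holomorphic in $t$ pointwise. Then $\psi_t\circ\psi_0^{-1}$ is a holomorphic motion of $\psi_0(\overline{\disk})$; the Slodkowski $\lambda$-lemma extends it, and \cite[Lemma~7.1, Propositions~7.1--7.2]{RS05} convert the restriction of that motion on the annulus $A_0 = \overline{B}\setminus\phi_0(\disk)$ into a quasiconformal interpolation $F_t$ with $t\mapsto\mu(F_t)$ holomorphic into $L^\infty_{-1,1}(\riem^B_{*,0})_1$. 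Composing with the change-of-base-point biholomorphism $g_*$ gives the holomorphic curve $t\mapsto[\riem^B, F_t\circ h_0,\riem^B_{*,t}]$ in $T(\riem^B)$, whose image under $\PDB$ is verified to be $[\riem^P,f,\riem^P_*,\phi_t]=L(\phi_t)$. Without the holomorphic-motion / $\lambda$-lemma mechanism — which you never name — your ``interpolate between $\phi$ on $\partial\disk$ and a fixed map elsewhere'' construction has no way to guarantee holomorphic dependence of $\mu_\phi$. The local boundedness half is easy and you have the right idea (a local section of $\PDB\circ\Phi$ maps into the unit ball of $L^\infty$), but the G\^ateaux half is where the substance lies and where the proposal stops short.
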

\begin{proof}

Fix $\phi_0 \in \Oqc(\riem^P_*)$. We will show that $L$ is
holomorphic in a neighborhood of $\phi_0$ by first proving that $L$
is G\^ateaux holomorphic and then that the lift of $L$ to
$L^{\infty}_{-1,1}(\riem^B)_1$ is locally bounded.

To simplify notation we assume that $\riem^P$ has only one puncture,
$p_1$.  The proof of the general case is identical with the
exception of the notation. Let $(T,V)$ be a chart on
$\Oqc(\riem_*^P)$ containing $\phi_0$ and let $B= B_1$, $D=D_1$ and
$\zeta = \zeta_1$ be as in Section \ref{se:CstructureOqc}. Recall
 that the map $\chi$ (\ref{chi}) defines the complex structure on $\Oqc$.

To show $L$ is G\^ateaux holomorphic we must show that
$$
L \circ T^{-1} \circ \chi^{-1} : A^1_{\infty}(\disk) \oplus \mathbb{C} \to
\widetilde{T}^P(\riem^P)
$$
is G\^ateaux holomorphic. Since the holomorphic structure on $\widetilde{T}^P(\riem^P)$ is obtained from $T(\riem^B)$ we proceed by producing a lift of $L \circ T^{-1} \circ \chi^{-1}$ to $T(\riem^B)$.

Let $\psi_0 = T(\phi_0) = \zeta \circ \phi_0 \in \Oqc$, $u_0 =
\mathcal{A}(\psi_0) \in A_1^{\infty}$, and choose an element $(v,c)
\in A_1^{\infty}(\disk) \oplus \mathbb{C}$. Let $N$ be an open
neighborhood of $0 \in \mathbb{C}$, and consider the map $N \to
\chi(T(V)) \subset A_1^{\infty} \oplus \mathbb{C}$ given by $t
\mapsto (u_0 + tv, q_t)$ where $q_t = \psi_0'(0)  + tc$. That is, we
have an complex line through $\chi(\psi_0) = (u_0, \psi_0'(0))$.

Let
$$
\psi_t = \chi^{-1}(u_0 + tv, q_t) .
$$
From the definition (\ref{chi}) of $\chi$ we have a differential
equation for $\psi_t$, whose solution is
\begin{equation}
\label{psitequation}
\psi_t(z)=\frac{q_t}{\psi_0'(0)} \int_0^z \psi_0'(\xi) \exp{\left(
t\int
       _0^{\xi} g(w)dw \right)} d\xi.
\end{equation}
It is clear from this expression that $\psi_t(z)$ is holomorphic in $t$ for fixed $z \in \disk$.

Now, $\psi_t \circ \psi_0^{-1}$ is a holomorphic motion of $\psi_0(\mathbb{D})$.  By the extended lambda-lemma \cite{Slodkowski}
it extends to a holomorphic motion of $\mathbb{C}$, and in particular the
continuous extension of $\psi_t \circ \psi_0^{-1}$ to
$\psi_0(\overline{\mathbb{D}})$ is a holomorphic motion.
Thus $\psi_t \circ \psi_0^{-1}$
restricted to $\psi_0(\partial \mathbb{D})$ is a holomorphic motion.

Let $\phi_t = T^{-1}(\psi_t) = \zeta^{-1} \circ \psi_t$, and let $A_t
= \overline{B} \setminus \phi_t(\disk)$ be the
annular region on $\riem^P_*$ bounded by $\partial B$ and
$\phi_t(\partial\mathbb{D})$. Applying \cite[Lemma 7.1]{RS05} we
obtain a holomorphic motion $H_t : \zeta(A_0) \to \zeta(A_t)$ such
that $H_t|_{\zeta(\partial B)}$ is the identity and
$H_t|_{\psi_0(\partial\mathbb{D})} = \psi_t \circ \psi_0^{-1}$.

Let $\riem^B_{*,t} = \riem_*^P \setminus \phi_t(\disk)$.
By \cite[Proposition 7.1]{RS05} the map $F_t : \riem^B_{*, 0} \to
\riem^B_{*, t}$ defined by
\begin{equation} \label{eq:Ftdefinition}
F_t =
\begin{cases}
\text{id} & \text{on } \riem_{*}^B \setminus A_0 \\
\zeta^{-1} \circ H_t \circ \zeta & \text{on } A_0 .
\end{cases}
\end{equation}
is quasiconformal and holomorphic in $t$ for fixed $z$. On
$\phi_0(\partial\mathbb{D})$, $F_t = \zeta^{-1} \circ \psi_t \circ
\psi_0^{-1} \circ \zeta = \phi_t \circ \phi_0^{-1}$. From
\cite[Proposition 7.2]{RS05}, $t \mapsto \mu(F_t)$ is a holomorphic
function $N \to L^{\infty}_{-1,1}(\riem^B_{*,0})_1$. Therefore, by 
the holomorphicity of the fundamental projection, $t\mapsto
[\riem^B_{*,0}, F_t ,\riem^B_{*,t}]$ is holomorphic.

Let $[\riem^B, h_0 ,\riem^B_{*,0}] = \PDB^{-1}([\riem^P,f,\riem^P_*,
\phi_0])$ where $\PDB^{-1}$ is a local inverse of $\PDB$ and $\tilde{h}_0 \circ \tilde{\tau} = \phi_0 $ from the definition of $\PDB$ in (\ref{eq:PDBdef}). For $g: \riem^B \to \riem^B_{*,0}$, let
\begin{align*}
{g}_* : T(\riem^B_{*,0}) & \to T(\riem^B) \\
[\riem^B_{*,0}, h_1, \riem^B_1] & \mapsto [\riem^B, h_1 \circ g,
\riem^B_1].
\end{align*}
be the change of base point biholomorphism (see \cite[Sections 2.3.1
and 3.2.5]{Nag}). In particular, $g_* ([\riem^B_{*,0}, F_t,
\riem^B_{*,t}])   = [\riem^B, \, F_t \circ g, \riem^B_{*,t}]$ is a
biholomorphism, and therefore the map $t \mapsto [\riem^B, \, F_t
\circ h_0, \riem^B_{*,t}]$ from $N$ into $T(\riem^B)$ is
holomorphic. Moreover, $t \mapsto \mu(F_t \circ h_0)$ is also
holomorphic.

From the boundary values of $F_t$ and $h_0$ we see that $F_t \circ h_0
\circ \tau = \phi_t$ on $\partial \disk$. Furthermore, extending
$F_t$, $h_0$ and $\tau$ by the identity to the caps as in equation
(\ref{eq:ftilde}), we have that $\tilde{F}_t \circ \tilde{h}_0 \circ
\tilde{\tau}$ is homotopic to $f$ (after identifying $\riem^B_{*,t}
\# \cdisk$ with $\riem^P_*$).   Thus
$$
\PDB([\riem^B, \, F_t \circ h_0,\riem^B_{*,t}]) = [\riem^P,
\tilde{F}_t \circ \tilde{h}_0,\riem^B_{*,t} \#_{\phi_t} \cdisk,\phi_t]= [\riem^P, f, \riem^P_*, \phi_t]
$$
and so
\begin{align*}
N & \to \widetilde{T}(\riem^P) \\
t & \mapsto (L \circ T^{-1} \circ \chi^{-1})(h_0 + tg, q_t) = [\riem^P, f, \riem^P_*, \phi_t]
\end{align*}
is holomorphic. That is, $L \circ T^{-1} \circ \chi^{-1}$ is G\^ateaux holomorphic.

We have that 1) the fundamental projection
$\Phi:L^\infty_{-1,1}(\Sigma^B)_1 \rightarrow T(\Sigma^B)$ is
holomorphic and possesses local holomorphic sections and 2)
$\PDB:T(\Sigma^B) \rightarrow \widetilde{T}^P(\Sigma^P)$ is
holomorphic and possesses local holomorphic sections.  Thus there
is an open neighborhood $M$ of $[\riem^P,f,\riem^P_*,\phi_0] \in
\widetilde{T}^P(\riem^P)$ and a local holomorphic section
$\sigma:M \rightarrow L^\infty_{-1,1}(\Sigma^B)$ of $\PDB \circ
\Phi$. Continuity in $t$ guarantees that for $|t|$ sufficiently
small, $[\riem^P,f,\riem^P_*,\phi_t] \subset M$. The function
$\sigma \circ L$ is G\^ateaux holomorphic and  locally bounded
since $\sigma$ maps into the open unit ball.  Thus, by Theorem
\ref{th:holo_equivalents}, $\sigma \circ L$ is holomorphic, and
therefore $L = \mathcal{P} \circ \Phi \circ \sigma \circ L$ is
holomorphic.
\end{proof}

\begin{remark}
\label{pointevalholo}
For any fixed $z \in \disk$, the point evaluation map $\Oqc \to \mathbb{C}$ given by $f \mapsto f(z)$ is holomorphic. The proof is an immediate consequence of Theorem \ref{th:holo_equivalents} noting that G\^ateaux holomorphy follows from equation (\ref{psitequation}), and continuity is proved in \cite[Corollary 3.4.]{RSOqc}.
This result is also included in the proof of \cite[Lemma 3.10]{RSOqc} but it is not mentioned explicitly. 
\end{remark}

\end{subsection}
\begin{subsection}{The local inverse of $L$} \label{se:Lambda}
In order to show that $L^{-1}$ is biholomorphic, we show that  $\PDB^{-1} \circ L:\Oqc(\riem^P_*) \rightarrow T(\riem^B)$ has a local holomorphic inverse
for any local inverse $\PDB^{-1}$ of $\PDB$.  The description of the inverse to $\PDB^{-1} \circ L$ is somewhat lengthy and deserves its own section.  In fact we are only able to explicitly describe the inverse on specified holomorphic curves.  The source of the trouble can be partly traced to the fact that local inverses  of $\PDB$
cannot be explicitly defined (see Remark \ref{re:PDB_no_inverse}).

It is necessary to make the following change of base point. Recall that $\riem^P = \riem^B\#_\tau  \cdisk^n$ as in Section \ref{se:fibration}, which we now think of as a punctured surface.
Let $\mathbb{U}$ be the upper half-plane, and choose a Fuchsian group $G$ such that $\riem_G = \mathbb{U}/G$
is an $n$-punctured surface
biholomorphic to $\riem^P$. Let 
 $$\alpha:\riem^P \rightarrow
 \riem_G
 $$ 
 be a fixed biholomorphism.

 Let $A$ be an open set of
 $T(\riem^B)$ such that $\PDB |_A$ is a biholomorphism.
 Let
 \begin{equation}
 \label{eq:A_Bdef}
 A_B=F_B \cap A ,
 \end{equation}
 where $F_B$ is the fiber in $T(\riem^B)$
 above the fixed point $[\riem^P,f,\riem_*^P]$ (see Section \ref{se:fibration}).
 
Given $[\riem^B,h,\riem_1^B] \in A_B$ we have that
$\PDB([\riem^B,h,\riem_1^B]) = ([\riem^P,\tilde{h}, \riem_1^B \#_{h
\circ
   \tau} \cdisk^n])$.  The change of base point biholomorphism $(\alpha^*)^{-1} : T(\riem^P) \to T(\riem_G)$ is defined by
$$
(\alpha^*)^{-1} ([\riem^P,\tilde{h}, \riem_1^B \#_{h \circ
   \tau} \cdisk^n])=[\riem_G, \tilde{h} \circ \alpha^{-1},
  \riem_1^B \#_{h \circ \tau} \cdisk^n].
$$ 

Now define the Beltrami differential
$$ \mu= \mu(\tilde{h} \circ \alpha^{-1}) =  \begin{cases}
  \mu(h \circ \alpha^{-1}) &  \text{on }   \alpha(\riem^B) \subset \riem_G \\
  0 & \text{on }  \alpha(\mathbb{D}).
\end{cases}
$$
in $L^{\infty}_{-1,1}(\riem_G)_1$. 
Let $L^{\infty}(\mathbb{U},G)_1$ be the space of Beltrami differentials compatible with $G$, and
following \cite[p. 51]{Nag} we identify $\mu$ with its  unique lift to $L^\infty(\mathbb{U},G)_1$. Let $w^\mu$ be the
unique solution to the Beltrami equation on $\mathbb{C}$ fixing $0$,
$1$ and $\infty$ and having dilatation $0$ on the lower half-plane.  Let $G^\mu = w^\mu \circ G \circ (w^{\mu})^{-1}$ and
\[  \riem^\mu = w^\mu(\mathbb{U})/G^\mu.  \]

Let $T(G) = L^{\infty}(\mathbb{U},G)_1 / \sim $, where $\mu \sim
\nu$ if and only if $w^\mu = w^{\nu}$ on $\mathbb{R}$, be the
Teichm\"uller space of $G$. Let $\pi^{\mu}: w^{\mu}(\mathbb{U}) \to
w^{\mu}(\mathbb{U}) / G^{\mu}$ be the canonical projection. It is a standard fact (see \cite[Sections 2.2.2 and 3.3.1]{Nag}) that:
\begin{lemma}
\label{le:indep_mu}
If $[\mu] = [\nu]$ in $T(G)$ then
$w^{\mu}(\mathbb{U}) = w^{\nu}(\mathbb{U})$, $G^{\mu} = G^{\nu}$, and $\pi^{\mu} = \pi^{\nu}$.
\end{lemma}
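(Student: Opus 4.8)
The plan is to prove Lemma~\ref{le:indep_mu} by the standard argument that the Teichm\"uller equivalence $[\mu]=[\nu]$ forces the two normalized solutions $w^\mu$ and $w^\nu$ of the Beltrami equation to differ by a M\"obius transformation, and then to track what this means for the deformed domain, group, and projection. First I would recall that $w^\mu$ is the unique quasiconformal self-map of $\hat{\mathbb{C}}$ fixing $0,1,\infty$ with dilatation $\mu$ on $\mathbb{U}$ and $0$ on the lower half-plane $\mathbb{L}$; in particular $w^\mu$ restricted to $\mathbb{L}$ is conformal, so $w^\mu$ maps $\mathbb{R}$ onto a quasicircle and $\mathbb{U}$ onto one of its complementary Jordan domains. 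The hypothesis $[\mu]=[\nu]$ says precisely that $w^\mu=w^\nu$ on $\mathbb{R}$. Hence $g:=w^\nu\circ(w^\mu)^{-1}$ is a well-defined quasiconformal self-map of $\hat{\mathbb{C}}$ which is the identity on $w^\mu(\mathbb{R})$, fixes $0,1,\infty$, and is conformal on $\mathbb{L}$ (being a composition of conformal maps there, since $w^\mu$ and $w^\nu$ both have dilatation $0$ on $\mathbb{L}$).

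Next I would argue that $g$ is conformal on $w^\mu(\mathbb{U})$ as well: the dilatation of $w^\nu$ agrees with that of $w^\mu$ after transport (both equal $\mu\sim\nu$, and on the complex-analytic level $w^\nu=g\circ w^\mu$ forces $\mu(g)\circ w^\mu \cdot \overline{(w^\mu)_z}/(w^\mu)_z$-type identity to vanish because $\mu(w^\nu)=\mu(w^\mu)$ a.e.). Therefore $g$ is $1$-quasiconformal, hence conformal, on all of $\hat{\mathbb{C}}$, i.e.\ $g$ is M\"obius; being the identity on the quasicircle $w^\mu(\mathbb{R})$ (which contains more than three points) it must be the identity. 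This gives $w^\mu=w^\nu$ on $\hat{\mathbb{C}}$, from which $w^\mu(\mathbb{U})=w^\nu(\mathbb{U})$ is immediate. Then $G^\mu=w^\mu\circ G\circ (w^\mu)^{-1}=w^\nu\circ G\circ(w^\nu)^{-1}=G^\nu$ follows by direct substitution, and since the canonical projection $\pi^\mu$ is determined by the pair $(w^\mu(\mathbb{U}),G^\mu)$ alone, $\pi^\mu=\pi^\nu$ as well.

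I do not expect a serious obstacle here; the one point requiring a little care is the claim that $g$ has vanishing dilatation on $w^\mu(\mathbb{U})$, which is where the equivalence relation on $T(G)$ must be used correctly. The cleanest route is to avoid differentiating $w^\mu$ altogether: use the uniqueness clause in the measurable Riemann mapping theorem. Namely, both $w^\nu$ and $g\circ w^\mu$ are quasiconformal maps of $\hat{\mathbb{C}}$ with the same dilatation $\nu$ (on $\mathbb{U}$) and $0$ (on $\mathbb{L}$) — here one uses that precomposition of the conformal map $g$ on $w^\mu(\mathbb{U})\cup w^\mu(\mathbb{L})$ does not change dilatation, but this presupposes $g$ conformal there, which is circular. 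So instead I would run the argument the other way: define $g$ as above, observe directly that $g$ is conformal off $w^\mu(\mathbb{U})$ and quasiconformal everywhere, and note that the ``conformal welding'' determined by $w^\mu|_{\mathbb{R}}=w^\nu|_{\mathbb{R}}$ together with the conformality on $\mathbb{L}$ forces, via the standard removability of quasicircles for conformal maps or via a direct Morera/reflection argument, that $g$ extends conformally across $w^\mu(\mathbb{R})$ and hence is M\"obius. Alternatively, and most economically, I would simply cite that this is exactly the content of \cite[Sections~2.2.2 and 3.3.1]{Nag} as stated, and present the reflection argument as the proof sketch; since the lemma is labeled ``a standard fact,'' a two-line proof referencing the uniqueness of normalized solutions and the identity principle for M\"obius maps is appropriate.
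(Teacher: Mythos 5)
Your proof aims at a conclusion that is actually false: you assert that $[\mu]=[\nu]$ forces $w^\mu=w^\nu$ on all of $\hat{\mathbb{C}}$. This cannot be right, because the dilatation of $w^\mu$ on $\mathbb{U}$ is $\mu$ and the dilatation of $w^\nu$ there is $\nu$, and $[\mu]=[\nu]$ in $T(G)$ does not mean $\mu=\nu$ a.e.\ --- it means only that $w^\mu=w^\nu$ on $\mathbb{R}$. Your second paragraph's key step, ``$\mu(w^\nu)=\mu(w^\mu)$ a.e.,'' is precisely the confusion of Teichm\"uller equivalence with pointwise equality of Beltrami coefficients. Consequently $g:=w^\nu\circ(w^\mu)^{-1}$ is not conformal on $w^\mu(\mathbb{U})$ in general; it is conformal only on $w^\mu(\mathbb{L})$. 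Your attempted repair in the last paragraph inherits the same problem: removability of the quasicircle $w^\mu(\mathbb{R})$ requires $g$ to be conformal on \emph{both} sides, and it is only conformal on one side, so $g$ is not M\"obius.

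The actual content of the lemma is strictly weaker than $w^\mu=w^\nu$, and the standard argument (the one Nag gives) runs as follows. Since $w^\mu$ and $w^\nu$ are both conformal on $\mathbb{L}$, agree on $\mathbb{R}$, and carry the same three-point normalization, a Carath\'eodory-type uniqueness argument gives $w^\mu=w^\nu$ on $\overline{\mathbb{L}}$ only. Hence $w^\mu(\mathbb{L})=w^\nu(\mathbb{L})$, and since both $w^\mu(\mathbb{U})$ and $w^\nu(\mathbb{U})$ are the complementary Jordan domain of the same quasicircle $w^\mu(\mathbb{R})=w^\nu(\mathbb{R})$, we get $w^\mu(\mathbb{U})=w^\nu(\mathbb{U})$ \emph{as sets} even though the maps differ on $\mathbb{U}$. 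For the group: for $A\in G$, $w^\mu\circ A\circ(w^\mu)^{-1}$ and $w^\nu\circ A\circ(w^\nu)^{-1}$ are both M\"obius (here $G$-compatibility of $\mu$ and $\nu$ is used), and they agree on the open set $w^\mu(\mathbb{L})$ because $w^\mu=w^\nu$ there; two M\"obius transformations agreeing on an open set are equal, so $G^\mu=G^\nu$. Finally $\pi^\mu$ and $\pi^\nu$ are both the canonical quotient of the identical domain $w^\mu(\mathbb{U})=w^\nu(\mathbb{U})$ by the identical group $G^\mu=G^\nu$, so $\pi^\mu=\pi^\nu$. Note that the paper itself offers no proof but cites Nag for exactly this chain of facts; your instinct to cite the reference is fine, but the sketch you attach to it contradicts the reference.
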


\begin{proposition}
\label{pr:riem_mu_fixed}
 The equivalence class $[\mu]$ and the Riemann surface $\riem^{\mu}$ are independent of $[\riem^B,h,\riem_1^B] \in A_B$.
\end{proposition}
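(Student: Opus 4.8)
The plan is to trace $\mu$ back through the chain of identifications to the single point $[\riem^P,f,\riem^P_*]\in T(\riem^P)$ over which $F_B$ lies, and then read off the statement about $\riem^\mu$ from Lemma~\ref{le:indep_mu}.

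First I would exploit that $A_B\subseteq F_B$. Writing $\riem^P_1:=\riem^B_1\#_{h\circ\tau}\cdisk^n$, the definition of the sewing map $\C$ in (\ref{sewingmap}) gives $\C([\riem^B,h,\riem^B_1])=[\riem^P,\tilde{h},\riem^P_1]$; since $F_B=\C^{-1}([\riem^P,f,\riem^P_*])$, this forces $[\riem^P,\tilde{h},\riem^P_1]=[\riem^P,f,\riem^P_*]$ for \emph{every} $[\riem^B,h,\riem^B_1]\in A_B$. Applying the change-of-base-point bijection $(\alpha^*)^{-1}\colon T(\riem^P)\to T(\riem_G)$, which sends a single point to a single point, the class $(\alpha^*)^{-1}([\riem^P,\tilde{h},\riem^P_1])=[\riem_G,\tilde{h}\circ\alpha^{-1},\riem^P_1]$ is likewise independent of the chosen point of $A_B$.

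Next I would identify this $T(\riem_G)$-class with $[\mu]\in T(G)$ via the standard isomorphism $T(\riem_G)\cong T(G)$ (\cite[Sections 2.2.2 and 3.3.1]{Nag}): a point $[\riem_G,g,\riem_2]$ corresponds to the class in $T(G)$ of the $G$-compatible lift to $\mathbb{U}$ of $\mu(g)$. Taking $g=\tilde{h}\circ\alpha^{-1}$, the dilatation $\mu(\tilde{h}\circ\alpha^{-1})$ is exactly the differential $\mu$ written down in the text, since $\alpha^{-1}$ is holomorphic (hence has zero dilatation) while $\tilde{h}$ equals $h$ on $\riem^B$ and the identity on the caps. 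Hence $[\mu]$ equals the image of the fixed point $(\alpha^*)^{-1}([\riem^P,f,\riem^P_*])$ under this isomorphism, so $[\mu]$ is independent of $[\riem^B,h,\riem^B_1]\in A_B$; this is the first assertion. The second then follows from Lemma~\ref{le:indep_mu}: with $[\mu]$ fixed, $w^\mu(\mathbb{U})$ and $G^\mu=w^\mu\circ G\circ(w^\mu)^{-1}$ are fixed, hence so is $\riem^\mu=w^\mu(\mathbb{U})/G^\mu$.

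I expect the only delicate point to be the compatibility of conventions in the second step: one must check that conjugation by the biholomorphism $\alpha$ carries the ``homotopic to the identity'' relation defining $T(\riem^P)$ over to the relation $\mu\sim\nu\Longleftrightarrow w^\mu|_{\mathbb{R}}=w^\nu|_{\mathbb{R}}$ defining $T(G)$, and that $\mu(\tilde{h}\circ\alpha^{-1})$ indeed admits the $G$-compatible lift to $\mathbb{U}$ referred to on \cite[p.~51]{Nag} (where hyperbolicity of the surfaces, i.e. $2g-2+n>0$, is used so that biholomorphisms and homotopies lift to $\mathbb{U}$). If one prefers not to invoke the isomorphism $T(\riem_G)\cong T(G)$ as a black box, the same conclusion can be reached by comparing two points of $A_B$ directly, say with data $(h_1,\riem^B_1)$ and $(h_2,\riem^B_2)$: equality of their $\C$-images supplies a biholomorphism $\sigma$ between $\riem^B_1\#_{h_1\circ\tau}\cdisk^n$ and $\riem^B_2\#_{h_2\circ\tau}\cdisk^n$ with $\tilde{h}_2^{-1}\circ\sigma\circ\tilde{h}_1$ homotopic to the identity of $\riem^P$; conjugating this homotopy by $\alpha$ shows $(\tilde{h}_2\circ\alpha^{-1})^{-1}\circ(\sigma\circ\tilde{h}_1\circ\alpha^{-1})$ is homotopic to the identity of $\riem_G$, so the $G$-compatible lifts to $\mathbb{U}$ of $\mu(\sigma\circ\tilde{h}_1\circ\alpha^{-1})=\mu(\tilde{h}_1\circ\alpha^{-1})=\mu_1$ and of $\mu(\tilde{h}_2\circ\alpha^{-1})=\mu_2$ agree on $\mathbb{R}$ after normalization, i.e. $[\mu_1]=[\mu_2]$ in $T(G)$ (post-composing with the biholomorphism $\sigma$ not altering the dilatation).
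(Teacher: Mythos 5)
Your proposal is correct and follows essentially the same route as the paper: use that all points of $A_B\subset F_B$ map under $\C$ to the fixed class $[\riem^P,f,\riem^P_*]$, apply $(\alpha^*)^{-1}$ to transfer this to a fixed class in $T(\riem_G)\cong T(G)$, and then invoke Lemma~\ref{le:indep_mu}. The paper states this more tersely (simply asserting that $\mu(\tilde h\circ\alpha^{-1})$ is Teichm\"uller-equivalent to the fixed $\mu(f\circ\alpha^{-1})$), while you unpack the identification $T(\riem_G)\cong T(G)$ and offer a direct two-point comparison; both are fine and the substance is the same.
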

\begin{proof}
By definition of the fiber $F_B$,
$
[\riem^P, f, \riem^P_*] = [\riem^P, \tilde{h}, \riem^B_1 \#_{h \circ \tau} \cdisk]
$, and applying $(\alpha^*)^{-1}$ leads to
\begin{align*}
[\riem_G, f \circ \alpha^{-1},\riem^P_*] & = [\riem_G, \tilde{h} \circ \alpha^{-1}, \riem^B_1 \#_{h \circ \tau} \cdisk] \\
\end{align*}
Therefore  $ \mu = \mu(
 \tilde{h} \circ \alpha^{-1})$ is equivalent to  the fixed element
 $\mu(f \circ \alpha^{-1})$. The result now follows from Lemma \ref{le:indep_mu}.
\end{proof}

We may therefore let $\sigma :\riem^\mu \rightarrow \riem_*^P$ be a fixed
biholomorphism, and define
\begin{align}
\label{eq:Lambdadef}
 \Lambda: A_B & \rightarrow \Oqc(\riem^P_*) \\
 [\riem^B,h,\riem_1^B] & \mapsto \sigma \circ f^{\mu} \circ \alpha \circ \tauP \nonumber
\end{align}
where $\mu=\mu(\tilde{h}\circ \alpha^{-1})$ and $f^\mu$ is the
unique quotient map
\[  f^\mu: \riem_G \rightarrow \riem^\mu.  \]
corresponding to $w^\mu$.
\begin{remark} \label{re:homotopic}
 The argument above also shows that for any $[\riem^B,h_1,\riem^B_1]$ and $[\riem^B,h_2,\riem^B_2]$ in $A_B$, the corresponding $w^{\mu_1}$ and $w^{\mu_2}$
 are homotopic rel $\partial \mathbb{U}$, and similarly
 $f^{\mu_1}$ and $f^{\mu_2}$ are homotopic in $\riem_G$ \cite[Sections 2.2.2 and 3.3.1]{Nag}.
\end{remark}

\begin{remark}
\label{mu_basepoint_holo}
The map 
\begin{align*}
L^{\infty}_{-1,1}(\riem^B) & \to L^{\infty}_{-1,1}(\riem_G) \\
\mu(h) & \mapsto \mu(\tilde{h}\circ \alpha^{-1})
\end{align*}
is holomorphic. This follows from the fact that
$\mu(h) \mapsto \mu(\tilde{h})$ is holomorphic (see \cite[Lemma 6.2]{RS05}), and the map $\mu(\tilde{h}) \mapsto \mu(\tilde{h} \circ \alpha^{-1})$ is holomorphic by the change of base point holomorphicity (see \cite[Sections 2.3.1
and 3.2.5]{Nag}).
\end{remark}

\begin{proposition}
\label{pr:LambdaWellDef}  If $(\riem^B,h_1,\riem_1^B)$ is equivalent to $(\riem^B,h_2,\riem_2^B)$ in $A_B \subset T(\riem^B)$, then the corresponding maps $f^{\mu_1}$ and $f^{\mu_2}$ are equal on $\alpha(\partial \riem^B)$.  In particular,
$\Lambda$ is well-defined.
\end{proposition}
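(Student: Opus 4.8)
The plan is to reduce the statement to the injectivity principle for Teichm\"uller equivalence of Fuchsian groups, using the change-of-base-point bookkeeping that is already in place. First I would unwind the definitions: if $(\riem^B,h_1,\riem_1^B)\sim(\riem^B,h_2,\riem_2^B)$ in $T(\riem^B)$, there is a biholomorphism $\kappa:\riem_1^B\to\riem_2^B$ with $h_2^{-1}\circ\kappa\circ h_1$ homotopic to the identity rel boundary. Applying the sewing construction and $(\alpha^*)^{-1}$ as in the proof of Proposition \ref{pr:riem_mu_fixed}, both $\mu_1=\mu(\tilde h_1\circ\alpha^{-1})$ and $\mu_2=\mu(\tilde h_2\circ\alpha^{-1})$ represent the \emph{same} point of $T(\riem_G)=T(G)$, namely the class of $\mu(f\circ\alpha^{-1})$; here one uses that extending $\kappa$ by the identity on the caps gives a biholomorphism realizing the equivalence of the sewn surfaces, compatibly with the riggings (cf. \cite[Theorem 3.3]{RS05}).

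Next I would invoke Lemma \ref{le:indep_mu}: since $[\mu_1]=[\mu_2]$ in $T(G)$, we have $w^{\mu_1}(\mathbb U)=w^{\mu_2}(\mathbb U)$, $G^{\mu_1}=G^{\mu_2}$, and $\pi^{\mu_1}=\pi^{\mu_2}$, so $\riem^{\mu_1}=\riem^{\mu_2}=:\riem^\mu$ (this is exactly Proposition \ref{pr:riem_mu_fixed}), and the quotient maps $f^{\mu_1},f^{\mu_2}:\riem_G\to\riem^\mu$ are both induced by boundary-equal solutions of the Beltrami equation. The key point is that $[\mu_1]=[\mu_2]$ in $T(G)$ means precisely $w^{\mu_1}=w^{\mu_2}$ on $\mathbb R=\partial\mathbb U$; passing to the quotient, $f^{\mu_1}$ and $f^{\mu_2}$ therefore agree on the image of $\partial\mathbb U$ in $\riem_G$. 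Since $\alpha(\partial\riem^B)$ lies in the ``ideal boundary'' portion of $\riem_G$ coming from $\partial\mathbb U$ — indeed $\alpha(\riem^B)$ is the subsurface of $\riem_G$ whose lift is bounded by $G$-translates of arcs of $\mathbb R$, the punctures corresponding to the removed caps — the equality of $f^{\mu_1}$ and $f^{\mu_2}$ on $\alpha(\partial\riem^B)$ follows. Finally, since $\tauP$ maps $\partial\disk^n$ into $\partial\riem^B$ and $\sigma:\riem^\mu\to\riem_*^P$ is a fixed biholomorphism independent of the representative, $\Lambda([\riem^B,h_1,\riem_1^B])=\sigma\circ f^{\mu_1}\circ\alpha\circ\tauP$ and $\sigma\circ f^{\mu_2}\circ\alpha\circ\tauP$ agree on $\partial\disk^n$, and by the definition of $\Oqc(\riem_*^P)$ (a conformal map on $\disk$ is determined by its boundary values) they agree on all of $\disk^n$; hence $\Lambda$ is well-defined.

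I expect the main obstacle to be the precise identification of $\alpha(\partial\riem^B)$ with the part of $\riem_G$ on which equality of the $f^{\mu_i}$ is guaranteed by boundary-agreement of the $w^{\mu_i}$ on $\mathbb R$. One must be careful that the homotopy realizing equivalence in $T(\riem^B)$ is rel \emph{boundary}, so that after sewing it becomes a homotopy rel punctures, which is what makes $\mu_1$ and $\mu_2$ Teichm\"uller-equivalent \emph{as elements of $T(G)$} and not merely as Beltrami differentials on $\riem_G$ up to isotopy — this is where the careful bookkeeping of Section \ref{se:fibration} and Remark \ref{re:homotopic} is essential. Once that identification is in hand, the rest is a direct application of Lemma \ref{le:indep_mu} together with the uniqueness of conformal extension from the boundary.
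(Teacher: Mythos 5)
There is a genuine gap. Your argument hinges on the claim that $\alpha(\partial\riem^B)$ sits in the ``ideal boundary'' of $\riem_G$ coming from $\partial\mathbb{U}$, so that agreement $w^{\mu_1}=w^{\mu_2}$ on $\mathbb{R}$ would transfer to agreement of $f^{\mu_1}$ and $f^{\mu_2}$ on $\alpha(\partial\riem^B)$. But $\riem_G = \mathbb{U}/G$ is an $n$-\emph{punctured} surface, so $G$ is a Fuchsian group of the first kind and $\riem_G$ has no ideal boundary at all. The curves $\alpha(\partial\riem^B)$ lie in the \emph{interior} of $\riem_G$ (after sewing, the boundary circles of $\riem^B$ become interior curves of $\riem^P$), and their lifts to $\mathbb{U}$ are compact-image curves in the interior of $\mathbb{U}$, not arcs of $\mathbb{R}$. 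Since $[\mu_1]=[\mu_2]$ in $T(G)$ only forces $w^{\mu_1}=w^{\mu_2}$ on $\mathbb{R}$ --- not on the interior of $\mathbb{U}$ --- it does \emph{not} follow that $f^{\mu_1}=f^{\mu_2}$ on $\alpha(\partial\riem^B)$. Indeed, two normalized solutions of the Beltrami equation representing the same Teichm\"uller class generically differ on the interior; equality on the specific interior curves $\alpha(\partial\riem^B)$ is precisely the nontrivial content of the proposition, and cannot be deduced from $T(G)$-equivalence alone.

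What the paper does instead: after reducing (by absorbing the biholomorphism $\gamma$) to the case $\riem^B_1=\riem^B_2$ with $h_2^{-1}\circ h_1$ homotopic to the identity rel $\partial\riem^B$, one has $h_1=h_2$ on $\partial\riem^B$ pointwise. Introducing the biholomorphisms $\delta_i:\riem_i^B\#_{h_i\circ\tau}\cdisk^n\to\riem^\mu$ satisfying $\delta_i\circ\tilde h_i\circ\alpha^{-1}=f^{\mu_i}$, one shows via Remark \ref{re:homotopic} that $\delta_2^{-1}\circ\delta_1$ is homotopic to the identity, and then rigidity ($2g-2+n>0$) forces $\delta_1=\delta_2$. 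The equality $f^{\mu_1}=f^{\mu_2}$ on $\alpha(\partial\riem^B)$ then drops out of the formula $f^{\mu_i}=\delta_i\circ\tilde h_i\circ\alpha^{-1}$ combined with $h_1=h_2$ on $\partial\riem^B$. The ingredient your argument is missing is exactly this pointwise boundary information $h_1|_{\partial\riem^B}=h_2|_{\partial\riem^B}$ being pushed forward through the common normalizing biholomorphism $\delta_1=\delta_2$; passing only through the Teichm\"uller class $[\mu]$ discards it.
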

\begin{proof}
 Let $[\riem^B,h_1,\riem_1^B]=[\riem^B,h_2,\riem_2^B]$, so that there exists a biholomorphism $\gamma:\riem_1^B \rightarrow \riem_2^B$ such that $h_2^{-1} \circ \gamma \circ h_1$ is homotopic to the identity rel $\partial \riem^B$.  Since the dilatation of $\gamma \circ h_1$ and $h_1$ are equal,  composition by $\gamma$ does not change the resulting $f^\mu$ in the definition of $\Lambda$.  Thus we may absorb
 $\gamma$ into $h_1$ and assume that $\riem_1^B=\riem_2^B$ and $h_2^{-1} \circ h_1$ is homotopic to the identity rel $\partial \riem^B$.
In particular, $h_1 = h_2$ on $\partial \riem^B$ and moreover $\tilde{h}_1$ is homotopic to $\tilde{h}_2$ on $\riem^P$.

Now let $\sigma:\riem^\mu \rightarrow \riem_*^P$ be the fixed biholomorphism in the definition of $\Lambda$.  Let $f^{\mu_i}: \riem_G \rightarrow \riem^\mu$ be the pair of maps with dilatations $\mu_i = \mu(\tilde{h}_i \circ \alpha^{-1})$ respectively.

Since $\tilde{h}_i \circ \alpha^{-1}$ and $f^{\mu_i}$ have the same
dilatations for $i=1,2$, there is a pair of biholomorphic maps
$\delta_i:\riem_i^B \#_{h_i \circ \tau} \cdisk^n \rightarrow
\riem^\mu$ such that
\begin{equation} \label{eq:deltahalphaf}
 \delta_i \circ \tilde{h}_i \circ \alpha^{-1} = f^{\mu_i}
\end{equation}
on
$\riem_G$. Because $h_1 = h_2$ on $\partial \riem^B$ and $\riem^B_1
= \riem^B_2$, $\riem_1^B \#_{h_1 \circ \tau} \cdisk^n = \riem_2^B
\#_{h_2 \circ \tau} \cdisk^n$.

By Remark \ref{re:homotopic}, we know that $(f^{\mu_2})^{-1} \circ
f^{\mu_1}$ is homotopic to the identity on $\riem_G$. Thus
$\delta_2^{-1} \circ \delta_1 $ is homotopic to the identity.  Since
$\riem^P$ is of type $(g,n)$ with $2g-2+n > 0$ the biholomorphism
$\delta_2^{-1} \circ \delta_1 $ must be the identity. We can
conclude that $\delta_1=\delta_2$. Therefore, since $h_1 = h_2$ on
$\partial \riem^B$, by (\ref{eq:deltahalphaf}) $f^{\mu_1} =
f^{\mu_2}$ on  $\alpha(\partial \riem^B)$, so
$$\Lambda([\riem^B, h_1, \riem_1^B]) = \Lambda([\riem^B, h_2, \riem_2^B]) .
$$

\end{proof}

Let $\PDB^{-1}$ denote a locally defined inverse of $\PDB$ (see
(\ref{eq:PDBdef})) in a neighborhood of $[\riem^P, f, \riem^P_*,
\phi_0]$. We need to show that
$$
\Lambda \circ \PDB^{-1} \circ L = \id
$$
on some neighborhood of $\phi_0$.   For notational simplicity we
work with the case of a single puncture on $\riem^P$.
Let $[\riem^B, h_0, \riem^B_{*, 0}] = \PDB^{-1} ([\riem^P, f,
\riem^P_*, \phi_0])$ and recall that $ \riem^B_{*,0} = \riem^P_*
\setminus \phi_0(\disk)$ and $h_0 \circ \tau = \phi_0$ on
$\partial \disk$.

Choose a compact set $E \subset \riem^P_*$ such that $E \subset \phi_0(\disk)$. Let $V_0 \subset \Oqc(\riem^P_*)$ be an open neighborhood  of $\phi_0$ such that for all $\phi \in V_0$, $E \subset \phi(\disk)$. This is possible by \cite[Corollary 3.4]{RSOqc}.

Let $\iota_0 : \riem^B_{*, 0} \#_{h_0 \circ \tau} \cdisk \to
\riem^P_*$ be the biholomorphism defined by
\begin{equation} \label{eq:iotadefinition}
\iota_0(z) =
\begin{cases}
z & z \in \riem^B_{*, 0} \\
\phi_0(z) & z \in \disk .
\end{cases}
\end{equation}
Since $h_0 \circ \tau = \phi_0$ on $\partial \mathbb{D}$, we have
\begin{equation}  \label{eq:temp}
 \iota_0 \circ \tilde{h}_0 \circ \tilde{\tau} = \phi_0.
\end{equation}
Let
$$\sigma = \iota_0 \circ \delta_0^{-1} : \riem^{\mu} \to \riem^P_*,
$$
where $\delta_0 :\riem_{*, 0}^B \#_{h_0 \circ \tau} \cdisk
\rightarrow \riem^\mu$ is the unique biholomorphism  satisfying (\ref{eq:deltahalphaf}).  
Use the map $\sigma$ for the biholomorphism in the definition of
$\Lambda$. Now let $\psi = \Lambda([\riem^B, h_0, \riem^B_{*,0}])$.
It follows directly that $\psi = \phi_0$:
\begin{align*}
\psi & = \sigma \circ f^{\mu_0} \circ \alpha \circ \tauP \\
& = \iota_0 \circ \delta_0^{-1} \circ ( \delta_0 \circ \tilde{h}_0 \circ \alpha^{-1} ) \circ \alpha \circ \tauP \\
& = \iota_0 \circ \tilde{h}_0 \circ \tauP.
\end{align*}
Thus by equation (\ref{eq:temp}) $\psi=\phi_0$.

For $\phi \in V_0$, let $[\riem_B, h_{\phi}, \riem^B_{*, \phi}] =
(\PDB^{-1} \circ L)(\phi)$ where $\riem^B_{*, \phi} = \riem^P_* \setminus \phi(\disk_0)$. We eventually want to show that for any $\phi
\in V_0$, $\Lambda([\riem_B, h_{\phi}, \riem^B_{*, \phi}]) = \phi$.

Choose a holomorphic curve $\phi_t \in V_0$ joining $\phi_0$ and
$\phi$. That is,  $\phi_1 = \phi$ and $\phi_0$ is as above. By Remark \ref{pointevalholo}, $\phi_t(z)$ is holomorphic in $t$ and so the construction in the proof of Lemma \ref{le:Lholo} can be repeated. Let
$\riem^B_{*, t}=\riem^B_{*, \phi_t}$ and define $F_t \circ h_0 : \riem^B
\to \riem^B_{*, t}$ where $F_t$ is as in (\ref{eq:Ftdefinition}).
As before, $t \mapsto \mu(F_t \circ h_0)$ is holomorphic. Let $h_t = F_t
\circ h_0$, $\mu_t = \mu(\tilde{h}_t \circ \alpha^{-1})$, and let $\delta_t : \riem^B_{*, t} \#_{h_t \circ \tau}
\overline{\mathbb{D}} \rightarrow \riem^\mu$ be the biholomorphism defined by
$$
\delta_t = f^{\mu_t} \circ \alpha \circ \tilde{h}_t^{-1}
$$ 
as in (\ref{eq:deltahalphaf}).   
Define $\iota_t$ as in (\ref{eq:iotadefinition}) by replacing $\phi_0$
with $\phi_t$.

\begin{lemma}
\label{le:delta_t_holo}
The map
$\delta_t : \riem^B_{*,t} \#_{h_t \circ \tau} \cdisk \to \riem^{\mu}$ is holomorphic in $t$ for $z \in \disk$.
\end{lemma}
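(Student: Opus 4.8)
The plan is to reduce the holomorphy of $\delta_t$ in $t$ to the holomorphy of objects already constructed in the proof of Lemma \ref{le:Lholo} and in Remark \ref{mu_basepoint_holo}. Recall that $\delta_t$ is defined on the sewn surface $\riem^B_{*,t} \#_{h_t \circ \tau} \cdisk$ by $\delta_t = f^{\mu_t} \circ \alpha \circ \tilde{h}_t^{-1}$; since $\riem_G$, $\alpha$ and (by Proposition \ref{pr:riem_mu_fixed}) the target $\riem^\mu$ are all fixed independently of $t$, the only sources of $t$-dependence are the maps $\tilde{h}_t^{-1}$ and $f^{\mu_t}$. First I would note that, by the construction preceding the lemma, $h_t = F_t \circ h_0$ with $t \mapsto \mu(F_t \circ h_0)$ holomorphic, hence $t \mapsto \mu(\tilde h_t)$ is holomorphic by \cite[Lemma 6.2]{RS05}, and then $t \mapsto \mu_t = \mu(\tilde h_t \circ \alpha^{-1}) \in L^\infty(\mathbb{U},G)_1$ is holomorphic by Remark \ref{mu_basepoint_holo}. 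By the holomorphic dependence of the normalized solution of the Beltrami equation on its dilatation (the standard Ahlfors--Bers theorem, as in \cite[Section 1.4]{Nag}), the point $w^{\mu_t}(\zeta)$ is then holomorphic in $t$ for each fixed $\zeta \in \mathbb{U}$, and hence so is $f^{\mu_t}(x) = \pi^{\mu}\circ w^{\mu_t}$ evaluated at a fixed point $x \in \riem_G$ (using Lemma \ref{le:indep_mu} to see that the projection $\pi^\mu$ is $t$-independent).

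Next I would deal with $\tilde h_t^{-1}$. For a fixed point $x$ in the domain of $\delta_t$, write $\delta_t(x) = f^{\mu_t}(\alpha(\tilde h_t^{-1}(x)))$. If $x$ lies on the cap $\cdisk$ of $\riem^B_{*,t} \#_{h_t \circ \tau}\cdisk$, then $\tilde h_t^{-1}(x) = x$ by the definition \eqref{eq:ftilde} of $\tilde h$ on $\mathbb{D}_0^n$, so $\delta_t(x) = f^{\mu_t}(\alpha(x))$ is holomorphic in $t$ by the previous paragraph. This covers the set $z \in \disk$ that the statement singles out. If one wants the full statement on $\riem^B_{*,t}\setminus\overline{U}$ as well, one observes that $\tilde h_t^{-1} = h_t^{-1} = h_0^{-1}\circ F_t^{-1}$ there; $F_t$ is a holomorphic motion by \cite[Proposition 7.1]{RS05} and the inverse of a holomorphic motion is again a holomorphic motion (by the $\lambda$-lemma, or by applying Slodkowski's theorem to the inverted family), so $x\mapsto F_t^{-1}(x)$, and hence $\tilde h_t^{-1}(x)$, is holomorphic in $t$. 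Composing with the $t$-holomorphic map $f^{\mu_t}\circ\alpha$ — using that $f^{\mu_t}$ is a fixed-target holomorphic family and hence jointly continuous, so the composition of two $t$-holomorphic families is $t$-holomorphic by an application of Theorem \ref{th:holo_equivalents} (Gâteaux holomorphy plus local boundedness) — gives holomorphy of $\delta_t(x)$ in $t$.

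Finally I would record the conclusion: for each fixed $z \in \disk$, $t \mapsto \delta_t(z)$ is holomorphic, which is exactly the assertion of the lemma. The main obstacle I expect is bookkeeping around the composition $f^{\mu_t}\circ\alpha\circ\tilde h_t^{-1}$: each factor is holomorphic in $t$ pointwise, but "holomorphic in $t$ for fixed $z$" is a statement about the composed \emph{map} evaluated at a fixed point, so one must be slightly careful that the inner map $\tilde h_t^{-1}$ moves the evaluation point holomorphically and that $f^{\mu_t}$ is continuous enough (jointly in its point-argument and in $t$) for the chain rule / composition of holomorphic families to apply. On the cap, where $\tilde h_t^{-1}$ is literally the identity, this difficulty evaporates, which is presumably why the statement is phrased for $z\in\disk$; the remaining cases follow from the holomorphic-motion properties of $F_t$ established in \cite[Propositions 7.1 and 7.2]{RS05}.
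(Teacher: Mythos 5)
Your proof is correct and takes essentially the same route as the paper: holomorphy of $t\mapsto\mu_t$ via Remark \ref{mu_basepoint_holo}, then Ahlfors--Bers to get holomorphy of $w^{\mu_t}(z)$ and hence $f^{\mu_t}(z)$ (using Lemma \ref{le:indep_mu} so $\pi^\mu$ is $t$-independent), and finally the key observation that $\tilde h_t^{-1}$ is the identity on the cap so $\delta_t(z)=(f^{\mu_t}\circ\alpha)(z)$ for $z\in\disk$. The extra discussion of the case $z\in\riem^B_{*,t}\setminus\overline U$ is not needed since the lemma only asserts holomorphy for $z\in\disk$.
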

\begin{proof}
Since $t \mapsto \mu(h_t)$ is holomorphic in $t$, Remark \ref{mu_basepoint_holo} shows that
$\mu_t = \mu(\tilde{h}_t \circ \alpha^{-1})$ is holomorphic in $t$.
Thus for fixed $z$, $w^{\mu_t}(z)$ is holomorphic in $t$. Because
$\pi^{\mu}$ is independent of $\mu$ by Lemma \ref{le:indep_mu} and
Proposition \ref{pr:riem_mu_fixed},  $f^{\mu_t}(z)$ is also
holomorphic in $t$ for fixed $z$.

For $z \in \disk$,
$$\delta_t(z) = (f^{\mu_t} \circ \alpha \circ \tilde{h}_t^{-1})(z)  = (f^{\mu_t} \circ \alpha)(z) $$
from the definition of $\delta_t$, and the conclusion follows immediately.
\end{proof}
\begin{lemma}
\label{sigmadef}
$\sigma = \iota_0 \circ \delta_0^{-1} = \iota_t \circ \delta_t^{-1}$
\end{lemma}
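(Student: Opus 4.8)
\emph{Proof proposal.} The plan is to recognize $g_t:=\iota_t\circ\delta_t^{-1}$ as a biholomorphism $\riem^\mu\to\riem^P_*$ for each $t$ and to show that $g_t$ is independent of $t$; since $g_0=\iota_0\circ\delta_0^{-1}=\sigma$ by construction, this is exactly the assertion. The mechanism is rigidity: two homotopic biholomorphisms between Riemann surfaces of type $(g,n)$ with $2g-2+n>0$ must coincide, because their ``difference'' is an automorphism homotopic to the identity, hence trivial. So it suffices to produce a homotopy $g_t\simeq g_0$ as maps $\riem^\mu\to\riem^P_*$.

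First I would record that $\delta_t$ and $\iota_t$ are biholomorphisms onto their stated targets. From (\ref{eq:deltahalphaf}), $\delta_t=f^{\mu_t}\circ(\tilde h_t\circ\alpha^{-1})^{-1}$ is conformal and bijective because $f^{\mu_t}$ and $\tilde h_t\circ\alpha^{-1}$ have the same dilatation $\mu_t$; and $\iota_t$ in (\ref{eq:iotadefinition}) patches conformally across the seam $\partial\disk$ because $h_t\circ\tau=\phi_t$ there (seen from the boundary values of $F_t$ and $h_0$, as in the proof of Lemma \ref{le:Lholo}). Using $\delta_t^{-1}=\tilde h_t\circ\alpha^{-1}\circ(f^{\mu_t})^{-1}$ I then rewrite
$$
g_t=(\iota_t\circ\tilde h_t)\circ\alpha^{-1}\circ(f^{\mu_t})^{-1}=\beta_t\circ\alpha^{-1}\circ(f^{\mu_t})^{-1},
$$
where $\beta_t:=\iota_t\circ\tilde h_t:\riem^P\to\riem^P_*$ is the quasiconformal map equal to $h_t$ on $\riem^B$ and to $\phi_t$ on the cap $\disk_0$.

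For the homotopy I run along the holomorphic curve $s\mapsto\phi_s$ in $V_0$, with $h_s=F_s\circ h_0$ and $F_0=\id$ as in the proof of Lemma \ref{le:Lholo}. Since $h_s\circ\tau=\phi_s$ on $\partial\disk$ for every $s$, the two branches of $\beta_s$ agree on the seam, so $\beta_s:\riem^P\to\riem^P_*$ is a well-defined quasiconformal map depending continuously on $s$; this gives $\beta_t\simeq\beta_0$. On the other factor, $[\riem^B,h_s,\riem^B_{*,s}]=(\PDB^{-1}\circ L)(\phi_s)$ lies in $A_B$ along the curve (by continuity of $L$ and the choice of neighborhood), so by Proposition \ref{pr:riem_mu_fixed} the classes $[\mu_s]$ all coincide, $\riem^{\mu_s}=\riem^\mu$, and by Remark \ref{re:homotopic} the maps $f^{\mu_s}:\riem_G\to\riem^\mu$ are mutually homotopic; hence $(f^{\mu_t})^{-1}\simeq(f^{\mu_0})^{-1}$. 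Composing homotopies, $g_t=\beta_t\circ\alpha^{-1}\circ(f^{\mu_t})^{-1}\simeq\beta_0\circ\alpha^{-1}\circ(f^{\mu_0})^{-1}=g_0=\sigma$, and rigidity forces $g_t=\sigma$.

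I expect the main obstacle to be the bookkeeping in the last paragraph: verifying that the branchwise deformations on $\riem^B$ and on the cap glue into a genuine homotopy of maps $\riem^P\to\riem^P_*$ (this rests on the uniform-in-$s$ identity $h_s\circ\tau=\phi_s$ on $\partial\disk$ together with $F_0=\id$), and confirming that the entire curve $[\riem^B,h_s,\riem^B_{*,s}]$ remains in $A_B$ so that Proposition \ref{pr:riem_mu_fixed} and Remark \ref{re:homotopic} are applicable. Once $g_t$ is identified with $\beta_t\circ\alpha^{-1}\circ(f^{\mu_t})^{-1}$, the remainder is formal manipulation of homotopies together with the rigidity of surfaces of hyperbolic type.
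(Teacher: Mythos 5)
Your proposal is correct, and it reaches the conclusion by a route that differs from the paper's mainly in the final rigidity step. The paper works with the automorphism $\beta_t = \sigma\circ\delta_t\circ\iota_t^{-1}$ of $\riem^P_*$ and deliberately restricts attention to a compact set $E$ sitting inside the interior of the cap: there $\beta_t(z) = (\sigma\circ f^{\mu_t}\circ\alpha\circ\tauP\circ\phi_t^{-1})(z)$ is visibly holomorphic in $t$, so one never has to worry about gluing across the seam. The paper then invokes Hurwitz's theorem (the automorphism group of $\riem^P_*$ is finite, hence discrete), the identity theorem ($\beta_t$ is determined by $\beta_t|_E$), continuity in $t$, and $\beta_0=\id$ to force $\beta_t\equiv\id$. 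You instead regard $g_t=\iota_t\circ\delta_t^{-1}:\riem^\mu\to\riem^P_*$ as a path of biholomorphisms, assemble a homotopy $g_t\simeq g_0=\sigma$ by splitting $g_t=\beta_t\circ\alpha^{-1}\circ(f^{\mu_t})^{-1}$ (with your $\beta_t=\iota_t\circ\tilde h_t$, not the paper's), and then use the rigidity fact that no nontrivial automorphism of a surface with $2g-2+n>0$ is homotopic to the identity. Both rigidity facts are standard and both arguments are valid; what the paper's choice buys you is that one only ever evaluates at $z\in E$ inside the cap, completely bypassing the bookkeeping you flag as the main obstacle (verifying that the deformations on $\riem^B$ and on the cap glue to a genuine homotopy $\riem^P\to\riem^P_*$). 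If you wanted to shorten your argument, you could adopt the paper's restriction-to-$E$ device and still finish with your homotopy-rigidity fact rather than Hurwitz.
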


\begin{proof}
Let
$$ \beta_t  = \sigma \circ \delta_t \circ \iota_t^{-1} : \riem^P_* \to \riem^P_*.$$
We will show that $\beta_t$ is the identity.  First we claim that
$\beta_t$ is holomorphic in $t$ for $z \in E$. We have that
$\phi_t(z)$ is holomorphic as a function of $t$ and $z$ and thus so
is $\phi^{-1}_t(z)$ by a direct application of the implicit function theorem. For $z \in E$, and using $\iota_t \circ \tilde{h}_t \circ \tilde{\tau} = \phi_t$ from (\ref{eq:temp}), 
\begin{align*}
\beta_t(z) & = (\sigma \circ f^{\mu_t} \circ \alpha \circ \tilde{h}_t^{-1} \circ \iota_t^{-1})(z) \\
& = (\sigma \circ f^{\mu_t} \circ \alpha \circ \tauP \circ \phi_t^{-1})(z)
\end{align*}
which is holomorphic in $t$.

 The fact that $\beta_t$ is the identity follows from the following four observations.
 (1) By Hurwitz's theorem, the automorphism group of $\riem^P_*$ is finite. (2) $\beta_t$ is the identity if and only if $\beta_t |_E$ is the identity. (3)  For $z \in E$, $\beta_t(z)$ is continuous in $t$. (4) From the definition of $\sigma$, $\beta_0$ is the identity.
\end{proof}

\begin{theorem}
\label{th:LambdaisLinv} For all $\phi \in V_0$, and any local
inverse $\PDB^{-1}$ of $\PDB$, $(\Lambda \circ \PDB^{-1} \circ
L)(\phi) = \phi$.
\end{theorem}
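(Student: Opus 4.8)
The plan is to prove, along the holomorphic curve $\phi_t \in V_0$ fixed just above (with $\phi_0$ as there and $\phi_1 = \phi$), the pointwise identity $(\Lambda \circ \PDB^{-1} \circ L)(\phi_t) = \phi_t$ for every $t \in [0,1]$, and then specialize to $t = 1$. After shrinking $V_0$ if necessary, I may assume $L(V_0)$ lies in the domain on which the chosen local inverse $\PDB^{-1}$ is defined and that $(\PDB^{-1} \circ L)(V_0) \subset A$; then $t \mapsto (\PDB^{-1} \circ L)(\phi_t)$ is a continuous (in fact holomorphic, by Lemma \ref{le:Lholo}) curve lying in $A$. The crux is the claim that this curve coincides with the explicitly constructed curve $t \mapsto [\riem^B, h_t, \riem^B_{*,t}]$, where $h_t = F_t \circ h_0$ as above.

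I would establish the claim by a uniqueness-of-lifts argument. Both curves are lifts of $t \mapsto L(\phi_t)$ under $\PDB$: trivially for $(\PDB^{-1} \circ L)(\phi_t)$, and, as in the proof of Lemma \ref{le:Lholo}, because $\PDB([\riem^B, h_t, \riem^B_{*,t}]) = [\riem^P, f, \riem^P_*, \phi_t] = L(\phi_t)$. Moreover at $t = 0$ both curves pass through $[\riem^B, h_0, \riem^B_{*,0}] = \PDB^{-1}(L(\phi_0))$ by construction. Since $\PDB$ is a local biholomorphism (Theorem \ref{th:sewing_summary}) and both curves are continuous --- here one uses the holomorphicity, hence continuity, of $t \mapsto \mu(F_t \circ h_0)$ noted above --- the set of $t \in [0,1]$ on which the two curves agree is open (by local injectivity of $\PDB$) and closed (by Hausdorffness of $T(\riem^B)$), hence equals $[0,1]$. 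I expect this propagation step to be the main obstacle, since it is forced on us precisely by the fact that $\PDB^{-1}$ admits no closed form (Remark \ref{re:PDB_no_inverse}) and one cannot compute $(\PDB^{-1} \circ L)(\phi)$ directly.

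Granting the claim, the conclusion follows by a short chain of substitutions. The point $[\riem^B, h_t, \riem^B_{*,t}]$ lies in $F_B$ because $\C = \mathcal{F} \circ \PDB$ sends it to $[\riem^P, f, \riem^P_*]$, and it lies in $A$ by the first paragraph, so it belongs to $A_B$ and $\Lambda$ is defined on it. By the definition (\ref{eq:Lambdadef}) of $\Lambda$, $\Lambda([\riem^B, h_t, \riem^B_{*,t}]) = \sigma \circ f^{\mu_t} \circ \alpha \circ \tauP$ with $\mu_t = \mu(\tilde{h}_t \circ \alpha^{-1})$; Lemma \ref{sigmadef} replaces $\sigma$ by $\iota_t \circ \delta_t^{-1}$; and the defining relation $\delta_t = f^{\mu_t} \circ \alpha \circ \tilde{h}_t^{-1}$ gives $\delta_t^{-1} \circ f^{\mu_t} \circ \alpha = \tilde{h}_t$, so $\Lambda([\riem^B, h_t, \riem^B_{*,t}]) = \iota_t \circ \tilde{h}_t \circ \tauP$. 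Finally, since $h_t \circ \tau = \phi_t$ on $\partial \disk$ and $\iota_t$ is (\ref{eq:iotadefinition}) with $\phi_0$ replaced by $\phi_t$, the same computation that produced (\ref{eq:temp}) yields $\iota_t \circ \tilde{h}_t \circ \tauP = \phi_t$. Combined with the claim this gives $(\Lambda \circ \PDB^{-1} \circ L)(\phi_t) = \phi_t$ for all $t$, and $t = 1$ is the assertion of the theorem.
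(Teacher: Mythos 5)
Your proof is correct and follows the same strategy as the paper's: join $\phi$ to $\phi_0$ by a holomorphic curve $\phi_t$, identify $(\PDB^{-1}\circ L)(\phi_t)$ with the explicitly constructed curve $[\riem^B,h_t,\riem^B_{*,t}]$ (where $h_t=F_t\circ h_0$), and then run the chain of substitutions using Lemma~\ref{sigmadef} and the defining relation $\delta_t=f^{\mu_t}\circ\alpha\circ\tilde{h}_t^{-1}$. The only point where you go beyond the paper is the uniqueness-of-lifts/connectedness argument justifying that identification, which the paper's proof writes down as its first equality without comment (indeed with a small typo, $\riem^B_{*,0}$ for $\riem^B_{*,t}$); your filling in of that step is sound and makes the argument more complete, but the overall route is the same.
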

\begin{proof}
We join $\phi$ to $\phi_0$ by a holomorphic curve $\phi_t$ such that $\phi_1 = \phi$.
From Lemma \ref{sigmadef} and equation (\ref{eq:deltahalphaf}) we have:
\begin{align*}
(\Lambda \circ  \PDB^{-1} \circ L)(\phi_t) & = \Lambda([\riem^B, h_t, \riem^B_{*,0}]) \\
& = \sigma \circ f^{\mu_t} \circ \alpha
\circ \tauP \\
& = (\iota_t \circ \delta_t^{-1})  \circ (\delta_t \circ \tilde{h}_t \circ \alpha^{-1}) \circ \alpha \circ \tauP \\
& = \iota_t \circ \tilde{h}_t \circ \tauP \\
& = \phi_t
\end{align*}
where the last equality follows as both $\tilde{\tau}$ and $\tilde{h}_t$ are the identity on $\disk$.
Setting $t=1$ completes the proof.
\end{proof}
\end{subsection}
\begin{subsection}{$L^{-1}$ is holomorphic}

\begin{lemma} \label{le:nearness}
Let $[\riem^B, h_0, \riem^B_{*, 0}] = \PDB^{-1} ([\riem^P, f,
\riem^P_*, \phi_0])$ for some local inverse $\PDB^{-1}$ of $\PDB$.
Let $B$ be an open set in $\riem^P_*$ such that
 $(\sigma \circ f^{\mu_0} \circ \alpha \circ \tauP)(\cdisk) \subset B$.
 There exists a neighborhood $A_0 \subset F_B$ of $[\riem^B,h_0,\riem^B_{*,0}]$ such
 that
 $$
\Lambda \left( [\riem^B,h,\riem^B_1] \right)(\cdisk) \subset B
 $$
for all $[\riem^B,h,\riem^B_1] \in A_0$.
\end{lemma}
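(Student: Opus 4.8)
The statement is a "continuity/openness of the image under $\Lambda$" result, so the plan is to reduce it to the continuity of the evaluation-type data already established in the construction of $\Lambda$, together with the openness result \cite[Corollary 3.5]{RSOqc} (or \cite[Corollary 3.4]{RSOqc}) that says the condition $\psi(\cdisk)\subset B$ is open in $\Oqc$. First I would fix a local chart $(T,V)$ on $\Oqc(\riem^P_*)$ around $\phi_0$ as in Section \ref{se:CstructureOqc}, with domains $B_i$ chosen so that in particular $B_i\subset B$ (shrinking $B$ is harmless since we only need \emph{some} such $B$, but here $B$ is given, so I instead choose the chart domains inside $B$; this is where property (3) of the chart construction and Remark \ref{re:Tdomain} are used). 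In that chart the condition "$\Lambda([\riem^B,h,\riem^B_1])(\cdisk)\subset B$" becomes "$T(\Lambda([\riem^B,h,\riem^B_1]))\in U$" for the open set $U=\{(\psi_1,\dots,\psi_n)\st \psi_i(\overline{\mathbb D})\subset\zeta_i(B_i)\}$, which is open in $\Oqc^n$ by \cite[Corollary 3.5]{RSOqc}.

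Next I would argue that $\Lambda$, composed with any such chart, is continuous at $[\riem^B,h_0,\riem^B_{*,0}]$. The cleanest route is to invoke what has already been proved: Theorem \ref{th:LambdaisLinv} shows $\Lambda\circ\PDB^{-1}\circ L=\id$ on $V_0$, so on $A_B\cap A_0'$ (with $A_0'$ a suitably small neighborhood on which $\PDB$ is a biholomorphism and its inverse lands in the domain of $L^{-1}$) we have $\Lambda = L^{-1}\circ(\PDB|_{A_0'})$ on the fiber, after identifying via $G$. Since $\PDB$ is holomorphic (Theorem \ref{th:sewing_summary}) and $L^{-1}$ restricted to a neighborhood of $\phi_0$ is at least continuous — indeed $L$ is a holomorphic bijection by Lemmas \ref{le:Lbijection} and \ref{le:Lholo}, and invariance of domain / the local-biholomorphism structure of $\PDB$ lets us treat $L^{-1}$ as continuous here, or one can simply use that $L$ is a homeomorphism onto its image once bijectivity and holomorphy are in hand — the composition is continuous. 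Then $\Lambda^{-1}(\{\phi\st\phi(\cdisk)\subset B\})$ is open, and since it contains $[\riem^B,h_0,\riem^B_{*,0}]$ (because $(\sigma\circ f^{\mu_0}\circ\alpha\circ\tauP)(\cdisk)=\Lambda([\riem^B,h_0,\riem^B_{*,0}])(\cdisk)\subset B$ by hypothesis), it contains a neighborhood $A_0$ of that point in $F_B$, which is exactly the claim.

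Alternatively — and this is probably the way the authors intend, to avoid circularity with the very theorem whose proof is being assembled — I would instead prove continuity of $\Lambda$ directly from its defining formula \eqref{eq:Lambdadef}, $\Lambda([\riem^B,h,\riem^B_1])=\sigma\circ f^\mu\circ\alpha\circ\tauP$. Here $\mu=\mu(\tilde h\circ\alpha^{-1})$ depends holomorphically (hence continuously) on $[\riem^B,h,\riem^B_1]\in F_B$ by Remark \ref{mu_basepoint_holo} combined with holomorphicity of the fundamental projection's local sections; $w^\mu$ and thus $f^\mu$ depend continuously on $\mu$ (standard dependence of the solution of the Beltrami equation on parameters, \cite{Lehto},\cite{Nag}); and $\sigma$, $\alpha$, $\tauP$ are fixed. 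Composing and restricting to $\cdisk$, the map $[\riem^B,h,\riem^B_1]\mapsto \Lambda([\riem^B,h,\riem^B_1])$ is continuous into $\Oqc(\riem^P_*)$ with the topology described in Section \ref{se:CstructureOqc}. Then openness of $\{\phi\st\phi(\cdisk)\subset B\}$ in that topology (again \cite[Corollary 3.4]{RSOqc}, since this set is $\bigcap_i\{\phi_i(\overline{\mathbb D})\subset B\}$ intersected appropriately, and $\cdisk$ differs from $\overline{\mathbb D}$ only at boundary points where the condition is closed — care is needed but $\cdisk\subset\overline{\mathbb D}$ so $\phi(\overline{\mathbb D})\subset B$ implies $\phi(\cdisk)\subset B$, and one uses the former) gives the neighborhood $A_0$.

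The main obstacle I anticipate is the dependence of the Riemann-surface-valued map $f^\mu$ on $\mu$ at the level needed: one must be careful that $\riem^\mu$ is \emph{fixed} (Proposition \ref{pr:riem_mu_fixed} handles this) so that "$f^{\mu}$ depends continuously on $\mu$" is a statement about maps into a fixed target, and that the identification of $\mu$ with its lift to $L^\infty(\mathbb U,G)_1$ and the normalization of $w^\mu$ (fixing $0,1,\infty$, trivial on the lower half-plane) make $\mu\mapsto w^\mu$ genuinely continuous in the relevant topologies — this is classical but needs to be cited precisely. A secondary subtlety is matching the topology on $\Oqc(\riem^P_*)$ used implicitly in the statement (local uniform convergence plus the chart structure) with the hypothesis on $B$; invoking Remark \ref{re:Tdomain} to get a single chart valid on all $\phi$ with $\phi_i(\overline{\mathbb D})\subset B_i$ sidesteps most of this bookkeeping.
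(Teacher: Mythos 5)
Your first route (via $\Lambda = L^{-1}\circ\PDB$ on the fiber) is circular, as you suspect: at this point in the paper only Lemmas \ref{le:Lbijection} and \ref{le:Lholo} are available, i.e.\ $L$ is a holomorphic bijection, and in an infinite-dimensional Banach-manifold setting that does not give continuity of $L^{-1}$ for free --- no invariance of domain. Establishing that $L^{-1}$ is holomorphic (hence continuous) is precisely the purpose of Theorem \ref{th:Lambdaholo}, and Lemma \ref{le:nearness} is an ingredient in its proof, so the first route assumes what it is setting out to show.

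Your second route points in the right direction --- continuity of $u\mapsto\mu(\tilde h_u\circ\alpha^{-1})$ via Remark \ref{mu_basepoint_holo} and a holomorphic section of the fundamental projection, then continuity of $\mu\mapsto w^\mu$ --- but there is still a gap in the final step. You assert that $\Lambda$ is continuous ``into $\Oqc(\riem^P_*)$ with the topology described in Section \ref{se:CstructureOqc}''; that topology is the chart topology induced by $\chi$, i.e.\ by $\|f''/f'\|_{1,\infty}+|f'(0)|$. Continuity of $\Lambda$ in that topology is exactly what Theorem \ref{th:Lambdaholo} will eventually deliver (holomorphy implies continuity), and deducing it at this stage from the quoted dependence results would require control of the pre-Schwarzian uniformly, which you have not supplied. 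The paper deliberately avoids making any claim about the $\Oqc$ topology here. Its proof is more elementary: it establishes \emph{joint} pointwise continuity of $(\mu,z)\mapsto w^\mu(z)$ (citing Hubbard, Theorem 4.7.4, plus a triangle-inequality argument to pass from uniform-in-$z$ continuity on a compact disc to joint continuity at a point), and then runs a compactness/finite-subcover argument over $\alpha(\partial\riem^B)$: for each boundary point $z$ one gets a product neighborhood $D(z,\delta_\xi)\times W_\xi$ on which $f^\mu(\zeta)$ lands in a small disc inside $B$, and finitely many $W_\xi$ intersect to give $A_0$. Your argument would become correct if you replaced ``continuity into the chart topology'' with ``continuity into the topology of uniform convergence on $\overline{\mathbb D}$'' and then noted that $\{\phi:\phi(\overline{\mathbb D})\subset B\}$ is open there --- but unpacking that uniform convergence statement is precisely the finite-subcover step you omit. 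The appeal to \cite[Corollary 3.4/3.5]{RSOqc} is then unnecessary; the paper does not use it in this lemma.
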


\begin{proof}
Let $\gamma$ be a local holomorphic section of the fundamental
projection $\Phi$ in a neighborhood of
$[\riem^B,h_0,\riem^B_{*,0}]$, and choose $U$ to be an open set in
the domain of $\gamma$ containing this point.
Given $u \in U$ we choose a representative $[\riem^B,h_u,\riem^B_1] = u$ such that $\mu(h_u) = \gamma(u)$.

By Remark \ref{mu_basepoint_holo}, $\mu(h_u) \mapsto \mu(\tilde{h}_u \circ \alpha^{-1})$ is holomorphic. Therefore, since $\gamma$ is holomorphic, the map
\begin{align*}
U & \to L^{\infty}_{-1,1}(\riem_G) \\
u & \mapsto  \mu(\tilde{h}_u \circ \alpha^{-1})
\end{align*}
is holomorphic and in particular continuous.

We now show in general that $w^\mu(z)$ and hence $f^\mu(z)$ are jointly continuous in $\mu$ and $z$. Let $\mathbb{D}_r=\{z:|z|<r\}$ for some $r>1$.  Fix $\mu_0$ and $z_0$.  By \cite[Theorem 4.7.4]{Hubbard}, for any $\epsilon>0$ there is a $\delta_1$ so that if $\|\mu-\mu_0\|<\delta_1$ then $|w_\mu(z)-w_{\mu_0}(z)|<\epsilon/2$ for all $z \in \overline{\mathbb{D}}_r$. On the other hand, since $w_{\mu_0}$ is continuous in $z$ there is a $\delta_2$ such that $|w_{\mu_0}(z)-w_{\mu_0}(z_0)|<\epsilon/2$ for all 
$|z-z_0|<\delta_2$.  We can assume that $\delta_2$ is small enough that the disk of radius $\delta_2$ centered on $z_0$ is contained in $\mathbb{D}_r$.
Thus for $|z-z_0|<\delta_2$ and $\|\mu-\mu_0\|<\delta_1$
\[  |w_{\mu}(z)-w_{\mu_0}(z_0)| \leq |w_\mu(z)-w_{\mu_0}(z)|+|w_{\mu_0}(z)-w_{\mu_0}(z_0)| <\epsilon.  \]
So $w^\mu(z)$ is jointly continuous in $\mu$ and $z$ for all $z \in \mathbb{D}$ and hence so is $f^\mu(z)$.  

By applying this fact with $\mu = \mu(\tilde{h} \circ \alpha^{-1})$, we have that
$$
(u,z) \mapsto f^{\mu}(z)
$$ 
is continuous. 

 In the following let $D(\zeta,R)$ denote the disc of radius $R$
 centered on $\zeta$.  Let $B$ be as in Section
 \ref{se:CstructureOqc} (recall that we are assuming that there is a single
 puncture).
Since each $(\sigma \circ f^\mu \circ \alpha)(\partial \riem^B)$ is
compact, there is an
 $r$ so that $D(\xi,r) \subset B$ for all $\xi \in (\sigma \circ f^\mu \circ \alpha)(\partial \riem^B)$.
 By the continuity of $f^\mu(\zeta)$ in both $\mu$ and $\zeta$ for each
 $\zeta \in \alpha(\partial \riem^B)$, and by letting $\xi=(\sigma \circ f^\mu)(z)$,
 one can choose an open neighborhood
 $U_\xi = D(z,\delta_\xi) \times W_\xi$ of $(z,[\riem^B,h_0,\riem^B_{*, 0}])$ so that
 $f^\mu(\zeta) \subset B(\xi,r)$ for all $\zeta \in D(z,\delta_\xi)$ and
 $[\riem^B,h_u,\riem_1^B] \in W_\xi$.   Since $\alpha(\partial
 \riem^B)$ is compact, its open cover by the union of
 $D(z,\delta_\xi)$ has a finite
 subcovering $D(z_1,\delta_{\xi_1}),\ldots, D(z_m,\delta_{\xi_m})$.
 The open set $A_0=W_{\xi_1} \cap \cdots
 \cap W_{\xi_m}$ has the desired properties.
\end{proof}

Fix a point $z_0 \in \alpha(\disk_0)$. Choose $Q \subset \disk_0$ to be an open neighborhood of $z_0$ such that there exists a local holomorphic section $s$ of $\pi_G : U \to U/G =
\riem_G$ defined on $(\alpha \circ \tau)(Q)$. Let $\pi = \pi^{\mu}$, which is
independent of $\mu$ by Lemma \ref{le:indep_mu} and Proposition
\ref{pr:riem_mu_fixed}. For $z \in Q$ we can now write $\Lambda$ in terms of
$w^\mu$ and fixed maps as follows:
\begin{equation}
\label{eq:Lambda_wmu}
\Lambda([\riem^B, h, \riem_1^B])
 = \sigma \circ (\pi \circ w^{\mu} \circ s) \circ \alpha \circ \tauP .
\end{equation}

With the aid of Theorem \ref{th:weak_holo}, we can now proceed with
the proof that $\Lambda$ is holomorphic.

\begin{lemma}
\label{lemma_phit_holo}
Let $t \mapsto [\riem^B, h_t, \riem_t^B]$ be a holomorphic curve in
$A_B$. For any $z \in \disk$, $\phi_t(z)=
\Lambda([\riem^B, h_t, \riem_t^B])(z)$ and all its derivatives in $z$
are holomorphic in $t$.
\end{lemma}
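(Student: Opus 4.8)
The plan is to fix a base parameter $t^{*}$ on the given curve, pass to a holomorphic chart $\zeta\colon B\to\mathbb{C}$ on $\riem^{P}_{*}$ with $\zeta(p_{1})=0$ (treating, as in Section \ref{se:CstructureOqc}, only the one-puncture case --- the general case differs only in notation), and study the $\mathbb{C}$-valued maps $g_{t}:=\zeta\circ\phi_{t}$, where $\phi_{t}=\Lambda([\riem^{B},h_{t},\riem^{B}_{t}])$. Since $\zeta$ is a fixed biholomorphism, the lemma is equivalent to the statement that $g_{t}(z)$ and each $\partial_{z}^{k}g_{t}(z)$ are holomorphic in $t$. Two preliminary remarks set the stage. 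First, because the curve lies in the \emph{single} fiber $A_{B}\subset F_{B}$, Proposition \ref{pr:riem_mu_fixed} together with Lemma \ref{le:indep_mu} shows that the surface $\riem^{\mu_{t}}=\riem^{\mu}$, the projection $\pi^{\mu_{t}}=\pi$, and the biholomorphism $\sigma\colon\riem^{\mu}\to\riem^{P}_{*}$ in the definition of $\Lambda$ are all independent of $t$ (here $\mu_{t}=\mu(\tilde h_{t}\circ\alpha^{-1})$). Second, representing the curve through a local holomorphic section of the fundamental projection $\Phi$ (Theorem \ref{th:Phiinducescomplexstructure}) and applying Remark \ref{mu_basepoint_holo}, the map $t\mapsto\mu_{t}\in L^{\infty}(\mathbb{U},G)_{1}$ is holomorphic; the value of $g_{t}$ does not depend on the chosen representative.

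The second step is to prove that, for each fixed $z\in\disk_{0}$, $t\mapsto g_{t}(z)$ is holomorphic, using the explicit formula (\ref{eq:Lambda_wmu}). Fix $z_{0}\in\disk_{0}$ and choose a neighbourhood $Q\ni z_{0}$ in $\disk_{0}$ together with a local holomorphic section $s$ of $\pi_{G}$ near $\alpha(z_{0})$, as in the construction of (\ref{eq:Lambda_wmu}). Since $\tauP$ is the identity on $\disk_{0}$, for $z\in Q$ we have $\phi_{t}(z)=\sigma\bigl(\pi\bigl(w^{\mu_{t}}(s(\alpha(z)))\bigr)\bigr)$, where for fixed $z$ the point $\xi=s(\alpha(z))\in\mathbb{U}$ is fixed. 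By the classical Ahlfors--Bers theorem on the holomorphic dependence of the normalized solution of the Beltrami equation on its complex dilatation (see e.g.\ \cite{Nag} or \cite{Hubbard}), $\mu\mapsto w^{\mu}(\xi)$ is holomorphic; composing the holomorphic maps $t\mapsto\mu_{t}$, $\mu\mapsto w^{\mu}(\xi)$ and the fixed holomorphic maps $\pi$ and $\sigma$ yields holomorphy of $t\mapsto\phi_{t}(z)$, first for $z\in Q$ and then (since $z_{0}$ was arbitrary) for every $z\in\disk_{0}$. At $z=0$, $\phi_{t}(0)=p_{1}$ is constant. Finally, the joint continuity of $(\mu,\zeta)\mapsto w^{\mu}(\zeta)$ established in the proof of Lemma \ref{le:nearness}, together with continuity of $t\mapsto\mu_{t}$, gives joint continuity of $(t,z)\mapsto g_{t}(z)$ on a neighbourhood of $\{t^{*}\}\times\disk_{0}$.

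The third step upgrades this to all of $\disk$ and to all $z$-derivatives by a Cauchy-estimate bootstrap. By Lemma \ref{le:nearness} (shrinking $B$ so that it is relatively compact in a coordinate chart) there are a neighbourhood $A_{0}$ of $[\riem^{B},h_{t^{*}},\riem^{B}_{t^{*}}]$ in $F_{B}$, a disk $\Delta\ni t^{*}$, and a compact $K\subset\mathbb{C}$ with $g_{t}(\cdisk)\subset K$ for all $t\in\Delta$; hence $|g_{t}|\le M$ on $\cdisk$, uniformly in $t\in\Delta$. Each $g_{t}$ is holomorphic and univalent on $\disk$ with $g_{t}(0)=0$, so $g_{t}(z)=\sum_{n\ge 1}a_{n}(t)z^{n}$, and for any $\rho\in(0,1)$
\[
a_{n}(t)=\frac{1}{2\pi i}\oint_{|w|=\rho}\frac{g_{t}(w)}{w^{n+1}}\,dw .
\]
On this circle $w\in\disk_{0}$, so by the second step the integrand is holomorphic in $t$ for each $w$ and jointly continuous in $(t,w)$; writing the integral as a uniform-in-$t$ limit of Riemann sums shows each $a_{n}$ is holomorphic on $\Delta$, while $|a_{n}(t)|\le M\rho^{-n}$ and $\rho\uparrow1$ give $|a_{n}(t)|\le M$. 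For fixed $r<1$ and $k\ge 0$ the series $\sum_{n\ge k}n(n-1)\cdots(n-k+1)\,a_{n}(t)\,z^{n-k}$ then converges uniformly on $\Delta\times\{|z|\le r\}$ by the Weierstrass $M$-test (its general term is $\le M n^{k}r^{n-k}$), so its sum $\partial_{z}^{k}g_{t}(z)$ is holomorphic in $t$; letting $r\uparrow1$ finishes all $z\in\disk$ and all $k$.

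I expect the main obstacle to be exactly this third step. The explicit formula (\ref{eq:Lambda_wmu}), which is what makes the holomorphic dependence of $w^{\mu}$ usable, holds only on the punctured disk $\disk_{0}$, where a holomorphic section of $\pi_{G}$ is available, and it says nothing directly about $\phi_{t}$ or its $z$-derivatives at the puncture $z=0$. Turning ``holomorphic in $t$ for each $z\in\disk_{0}$'' into ``holomorphic in $t$ together with every $z$-derivative on all of $\disk$'' genuinely needs the uniform bound supplied by Lemma \ref{le:nearness} to drive the Cauchy/power-series argument --- there is no way to control the puncture otherwise. A secondary point one must not overlook is that the curve is taken \emph{inside} the fiber $A_{B}$: this is precisely what keeps $\riem^{\mu}$, $\pi$ and $\sigma$ fixed, so that ``holomorphic in $t$'' for a map valued in a Riemann surface is unambiguous and reduces cleanly to holomorphy of the $\mathbb{C}$-valued $g_{t}$.
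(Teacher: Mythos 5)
Your proposal is correct, but the final step takes a genuinely different route from the paper. Up through establishing that $t\mapsto\phi_t(z)$ is holomorphic for each fixed $z\in\disk_0$ (via the explicit formula (\ref{eq:Lambda_wmu}) and the Ahlfors--Bers holomorphic dependence of $w^\mu$ on $\mu$), your argument and the paper's coincide. Where they diverge is in how to push this to $z=0$ and to all $z$-derivatives. The paper observes that since $\mu_t$ vanishes on the lift of the cap, $w^{\mu_t}(z)$ is holomorphic in $z$ as well as in $t$, hence (by Hartog's separate-analyticity theorem) jointly holomorphic, which handles the derivatives at $z_0\neq 0$; then for $z_0=0$ it records that $\phi_t(0)=p$ is constant in $t$ and $\phi_t(\cdot)$ is holomorphic on all of $\disk$ for each $t$, and invokes Hartog's theorem once more to upgrade separate to joint analyticity on $N\times\disk$. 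Your proof replaces this with a Cauchy-estimate bootstrap: using the uniform bound $|g_t|\le M$ on $\cdisk$ from Lemma~\ref{le:nearness}, you show each Taylor coefficient $a_n(t)=\frac{1}{2\pi i}\oint_{|w|=\rho}g_t(w)w^{-n-1}\,dw$ is holomorphic (the integrand being holomorphic in $t$, uniformly bounded, and jointly continuous), then deduce holomorphy of $\partial_z^k g_t(z)$ from uniform convergence of the differentiated power series via the Weierstrass $M$-test. Both routes are valid; the paper's is shorter because Hartog's theorem does the heavy lifting in one stroke and does not require the boundedness input, while yours is more elementary and makes transparent exactly why the a priori bound of Lemma~\ref{le:nearness} is the ingredient that controls behaviour at the puncture. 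One minor remark: the ``uniform-in-$t$ limit of Riemann sums'' justification for holomorphy of $a_n$ is a bit terse; it is cleaner either to invoke Morera's theorem together with Fubini (interchange of the $t$- and $w$-contour integrals, justified by the uniform bound), or to note that joint continuity on the compact set $\overline\Delta\times\{|w|=\rho\}$ gives the needed uniformity.
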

\begin{proof}
We first show that the claim holds for a neighborhood of any fixed $z_0$ in $\mathbb{D}_0$.  By the existence of holomorphic sections of the
fundamental projection the curve $t \mapsto [\riem^B, h_t,
\riem_t^B]$ is the image of a holomorphic curve in
$L^{\infty}_{-1,1}(\riem^B)_1$. We can thus assume without loss of
generality that our representatives $(\riem^B,h_t,\riem_t^B)$ are
such that $t \mapsto \mu(h_t)$ is holomorphic in $t$.
Thus the maps $t \mapsto  \mu_t =
\mu(\tilde{h}_t)$ and $t \mapsto w^{\mu_t}(z)$ are holomorphic in
$t$  as in the proof of Lemma \ref{le:delta_t_holo}.

Let $C = \alpha(\disk_0)$ be the cap on $\riem_G$.
Since $\mu_t$ is zero on $C$, $w^{\mu_t}(z)$ is holomorphic in $z$
for $z \in s(C)$. Therefore, it is a holomorphic function of $t$ and
$z$ and hence all its derivatives are also holomorphic functions of
both variables. The statement for $\phi_t$ then follows from
(\ref{eq:Lambda_wmu}) for all $z$ in some neighborhood $Q$ of $z_0$. This proves the claim for $z \neq 0$.

To prove the claim for $z_0 = 0$, observe that for all $t$,
$\phi_t(0)=p$. So in fact we know that $\phi_t(z)$ is holomorphic
in $z$ for fixed $t$ for all $z \in \mathbb{D}$.  Furthermore,
clearly $\phi_t(0)$ is holomorphic in $t$.  Thus $\phi$ is
holomorphic in $t$ and $z$ separately and so by Hartog's theorem
$\phi$ is jointly holomorphic in both $t$ and $z$.  Thus all the derivatives
of $\phi$ with respect to $z$ are holomorphic in $t$ for any fixed
$z \in \mathbb{D}$.
\end{proof}

Fix the
point $[\riem^B,h_0,\riem^B_{*,0}] \in F_B$, and
let $B_i$ $i=1,\ldots, n$
and $A_0$ be as in Lemma \ref{le:nearness}; it is possible to choose
the $B_i$ to be non-overlapping. Let $\zeta_i:B_i \rightarrow
\mathbb{D}$, $i=1,\ldots,n$ be biholomorphisms. From Remark \ref{re:Tdomain} there is a
corresponding chart $T:\Oqc(\riem_*^P) \rightarrow \Oqc^n$ on the
open subset $V=\{ \phi \in \Oqc(\riem_*^P) \st  \phi_i(\overline{\mathbb{D}}) \subset B_i \}$ containing
$\Lambda([\riem^B,h_0,\riem^B_0])$, given by
$T((\phi_1,\ldots,\phi_n))=(\zeta_1 \circ \phi_1, \ldots, \zeta_n
\circ \phi_n)$.

\begin{theorem}
\label{th:Lambdaholo} There exists an open neighborhood $A_0$ of
$[\riem^B,h_0,\riem^B_{*,0}] \in A_B$ such that  $\Lambda: A_0 \to
\Oqc(\riem^P_*)$ is holomorphic.
\end{theorem}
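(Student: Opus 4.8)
Here is a plan for proving Theorem~\ref{th:Lambdaholo}.

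The plan is to check holomorphy of $\Lambda$ in the local coordinate $T$ on $\Oqc(\riem^P_*)$ and the coordinate $\chi$ (\ref{chi}) on each factor $\Oqc$, reducing everything to the pointwise holomorphy already recorded in Lemma~\ref{lemma_phit_holo} together with the elementary distortion bounds for univalent maps. First I would apply Lemma~\ref{le:nearness} (with $B=B_i$ for each $i$, the $B_i$ chosen non-overlapping) to obtain a neighborhood $A_0\subset F_B$ of $[\riem^B,h_0,\riem^B_{*,0}]$ with $\Lambda(A_0)\subset V=\{\phi\in\Oqc(\riem^P_*)\st\phi_i(\overline{\disk})\subset B_i,\ i=1,\dots,n\}$, so that by Remark~\ref{re:Tdomain} the map $T\circ\Lambda$ is defined on $A_0$. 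Since $T$ and $\chi$ are biholomorphic coordinates, it suffices to prove that each component
\[
  \chi\circ T_i\circ\Lambda\colon A_0\longrightarrow A^1_\infty(\disk)\oplus\mathbb{C},
  \qquad [\riem^B,h,\riem^B_1]\longmapsto\bigl(\mathcal{A}(\zeta_i\circ\phi_i),\,(\zeta_i\circ\phi_i)'(0)\bigr)
\]
is holomorphic, where $(\phi_1,\dots,\phi_n)=\Lambda([\riem^B,h,\riem^B_1])$.

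The core step is to show that $\chi\circ T_i\circ\Lambda$ is G\^ateaux holomorphic, and for this I would use Theorem~\ref{th:weak_holo}. On the Banach space $A^1_\infty(\disk)\oplus\mathbb{C}$ the functionals $(v,c)\mapsto v(z)$, $z\in\disk$, together with $(v,c)\mapsto c$, form a separating family of continuous linear functionals. Working in a Banach chart on $A_B$, any complex line in the chart is carried to a holomorphic curve $t\mapsto[\riem^B,h_t,\riem^B_t]$ in $A_0$; writing $(\phi_{1,t},\dots,\phi_{n,t})=\Lambda([\riem^B,h_t,\riem^B_t])$, Lemma~\ref{lemma_phit_holo} shows that $\phi_{i,t}(z)$ is jointly holomorphic in $(t,z)$, hence so is $(\zeta_i\circ\phi_{i,t})(z)$ and each of its $z$-derivatives. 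Therefore, for every fixed $z\in\disk$, both
\[
  t\longmapsto\mathcal{A}(\zeta_i\circ\phi_{i,t})(z)=\frac{(\zeta_i\circ\phi_{i,t})''(z)}{(\zeta_i\circ\phi_{i,t})'(z)}
  \qquad\text{and}\qquad
  t\longmapsto(\zeta_i\circ\phi_{i,t})'(0)
\]
are holomorphic in $t$ (the denominator never vanishes, since $\phi_{i,t}$ is univalent and $\zeta_i$ biholomorphic). Thus every functional in the separating family, composed with $t\mapsto(\chi\circ T_i\circ\Lambda)([\riem^B,h_t,\riem^B_t])$, is holomorphic in $t$.

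The local-boundedness hypotheses of Theorems~\ref{th:weak_holo} and~\ref{th:holo_equivalents} are easy here, and in fact $\chi\circ T_i\circ\Lambda$ is bounded on all of $A_0$: since $\zeta_i\colon B_i\to\disk$ is a biholomorphism with $\zeta_i(p_i)=0$ and $\phi_i(\overline{\disk})\subset B_i$, the map $g=\zeta_i\circ\phi_i$ is univalent from $\disk$ into $\disk$ with $g(0)=0$, so $|(\zeta_i\circ\phi_i)'(0)|\le1$ by the Schwarz lemma, while the classical distortion estimate for univalent functions gives $(1-|z|^2)|g''(z)/g'(z)|\le6$, i.e. $\|\mathcal{A}(\zeta_i\circ\phi_i)\|_{1,\infty}\le6$. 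Hence $\|\chi(T_i(\Lambda([\riem^B,h,\riem^B_1])))\|\le7$ for all $[\riem^B,h,\riem^B_1]\in A_0$.

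Putting the pieces together: by Theorem~\ref{th:weak_holo} the map $t\mapsto(\chi\circ T_i\circ\Lambda)([\riem^B,h_t,\riem^B_t])$ is holomorphic into $A^1_\infty(\disk)\oplus\mathbb{C}$ for every holomorphic curve in $A_0$, so $\chi\circ T_i\circ\Lambda$ is G\^ateaux holomorphic; being also (globally) bounded, Theorem~\ref{th:holo_equivalents} shows it is holomorphic. Since $i$ is arbitrary, $T\circ\Lambda$ and hence $\Lambda$ is holomorphic on $A_0$. The one place where care is needed is the passage from the pointwise information of Lemma~\ref{lemma_phit_holo} to holomorphy valued in the $A^1_\infty$-norm; this is handled exactly by the separating family of evaluation functionals together with the uniform bound on the pre-Schwarzian, so that no genuine obstacle remains — all the real analytic work has been done in Lemmas~\ref{le:nearness} and~\ref{lemma_phit_holo}.
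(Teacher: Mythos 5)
Your proposal is correct and follows essentially the same route as the paper: reduce to the coordinates $T$ and $\chi$, establish weak G\^ateaux holomorphy via the point-evaluation functionals on $A^1_\infty(\disk)\oplus\mathbb{C}$ using Lemma~\ref{lemma_phit_holo}, and supply local boundedness by the Schwarz lemma and the distortion bound $\|\mathcal{A}(\psi)\|_{1,\infty}\leq 6$ for univalent self-maps of $\disk$ fixing the origin, invoking Theorems~\ref{th:weak_holo} and~\ref{th:holo_equivalents}. The only additions are minor but welcome clarifications — that the pre-Schwarzian denominator is nonzero by univalence, and that passing from pointwise data to $A^1_\infty$-valued holomorphy is exactly what the separating family plus the uniform bound buys.
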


\begin{proof}  Choose $A_0$ as in Lemma \ref{le:nearness} and the preceding paragraph. 

Using Theorems \ref{th:holo_equivalents} and \ref{th:weak_holo} it is enough to show weak G\^ateaux-holomorphy and local boundedness.  As  before we temporarily drop
the subscript $i$ for ease of notation.

Let $\phi_t$ be as in Lemma \ref{lemma_phit_holo} where it was proved that  $t \mapsto \phi_t(z)$
is holomorphic in $t$. 
The complex structure on $\Oqc(\riem^P_*)$ is defined by
$$
\Oqc(\riem^P_*) \stackrel{T}{\longrightarrow} \Oqc \stackrel{\chi}{\longrightarrow} A^1_{\infty} \oplus \mathbb{C}.
$$
Recall from Section \ref{se:CstructureOqc} that $\chi(f) = (\mathcal{A}(f), f'(0))$ where $\mathcal{A}(f) = f''/f'$. Let $\psi_t = T(\phi_t) = \zeta \circ \phi_t$.  
We now need to prove that $t \mapsto \chi(T(\phi_t))$ satisfies condition (1) of Theorem \ref{th:weak_holo}.

This is immediate for the second component of $\chi$ because $\phi_t(0)$ is holomorphic in $t$.
Since $\phi_t(z)$ and its derivatives are holomorphic in $t$ for fixed $z$, we see that
$$
t \mapsto (\mathcal{A}(\psi_t))(z)
$$
is also holomorphic in $t$. Define  $E_z : A^1_{\infty} \to \mathbb{C}$ by
$E_z(f) = f(z)$. These point evaluation maps are continuous linear
functionals for $z \in \mathbb{D}$. If $\Omega$ is an open subset of
$\disk$ then $\{E_z \st z \in \Omega\}$ is a separating set of
functionals because the holomorphic functions are
determined by their values on an open set.

By Theorem \ref{th:weak_holo}  it remains to show that $\Lambda$ is
locally bounded. From the definition of the complex structure on
$\Oqc(\riem^P_*)$ we must prove that
 $\chi \circ T \circ \Lambda$ is locally bounded.

 It suffices to show that each component of $\chi \circ T \circ \Lambda$ in
 $A^1_{\infty} \oplus \mathbb{C} $ is bounded uniformly on $A_0$.  For any element of
 $A_0$ the corresponding map
$$ 
\psi = \zeta \circ \sigma \circ f^\mu \circ \alpha \circ \tilde{\tau} \in \Oqc
$$
is a holomorphic map from $\mathbb{D}$ to $\mathbb{D}$ satisfying
 $\psi(0)=0$.  By the Schwarz lemma
 \[ |(\psi'(0)| \leq 1.  \]
 Furthermore, by an elementary estimate for univalent maps
 of the disk (using again the above bound on the first derivative)
 \[ |(1-|z|^2)\mathcal{A}(\psi)(z)
  -2\bar{z}| \leq 4 \]
 so
 \[  ||\mathcal{A}(\psi)||_{1,\infty} \leq 6.  \]
\end{proof}

By Theorem \ref{th:LambdaisLinv} and the preceding theorem we now conclude that $\Lambda \circ \PDB^{-1}$ is a local holomorphic inverse of $L$.

\end{subsection}
\end{section}

\begin{section}{Coordinates on the Teichm\"uller space $T(\riem^B)$}

Schiffer variation together with coordinates on the fibers $F_B$ provide local holomorphic coordinate charts for the infinite-dimensional Teichm\"uller space $T(\riem^B)$.

Let $S : \Omega \to T(\riem^P)$ be the coordinates in a neighborhood
of $[\riem^P, f, \riem^P_*] \subset T(\riem^P)$ obtained by Schiffer
variation as described in Theorem \ref{SchifferTheorem}. Let $\riem^P_{\epsilon} = (\riem^P_*)^{\epsilon}$ and recall that $f^{\epsilon} = \nu^{\epsilon} \circ f$.

Note in particular that for any $\epsilon \neq 0$ in $\Omega$, $S(\epsilon)
\neq [\riem^P, f, \riem^P_*]$. This implies that Schiffer variation
in $\riem^B$ is transverse to the fibers $F_B$.

Let $(T,V)$ be a chart on $\Oqc(\riem^P_*)$ as in Section
\ref{se:CstructureOqc}.  Recall that this chart can be chosen so
that for all $\phi \in V$, $\phi(\cdisk)$ is contained in
some fixed open $B \subset \riem^P_*$. Now choose the neighborhoods
$U_i$, $i =1,\ldots,d$, on which the Schiffer variation is to be
performed, to be disjoint from $B$.

Let $f_{\phi}$ be as in Corollary \ref{co:OqcFBbiholo}.
\begin{theorem}
The map
\begin{align*}
(\Omega \times V) & \to T(\riem^B) \\
(\epsilon, \phi) & \mapsto [\riem^B, (f_{\phi})^{\epsilon}, (\riem^P_* \setminus \phi(\disk_0^n))^{\epsilon}]
\end{align*}
is biholomorphic onto its image.
\end{theorem}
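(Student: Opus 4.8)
The plan is to realise the stated map as a combination of the Schiffer section of $\C$ with the fibrewise biholomorphism of Corollary~\ref{co:OqcFBbiholo}, and then to check that it is injective with everywhere invertible derivative. Write $\Psi \colon \Omega \times V \to T(\riem^B)$, $(\epsilon,\phi) \mapsto [\riem^B,(f_\phi)^\epsilon,(\riem^P_*\setminus\phi(\disk_0^n))^\epsilon]$.

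First I would establish the compatibility $\C\circ\Psi = S\circ\operatorname{pr}_1$. Because the parametric disks $U_1,\dots,U_d$ used in the Schiffer variation were chosen disjoint from $B$, and $\phi(\overline{\mathbb{D}})\subset B$ for every $\phi\in V$, the variation map $\nu^\epsilon$ is the identity on a neighbourhood of $\phi(\overline{\mathbb{D}})$. Hence $(f_\phi)^\epsilon\circ\tau = \phi$ on $\partial\disk$, the Schiffer variation commutes with re-sewing the caps, and $(\riem^P_*\setminus\phi(\disk_0^n))^\epsilon \#_{(f_\phi)^\epsilon\circ\tau}\cdisk^n = (\riem^P_*)^\epsilon$ with marking $\widetilde{\nu^\epsilon\circ f_\phi}=\nu^\epsilon\circ\widetilde{f_\phi}$ homotopic to $\nu^\epsilon\circ f$; thus $\C(\Psi(\epsilon,\phi)) = S(\epsilon)$, independent of $\phi$. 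In particular $\Psi$ maps into $\bigcup_{\epsilon}F_B(S(\epsilon))$ and $\Psi(\{0\}\times V)\subset F_B$. The same disjointness shows that $(\riem^P_*)^\epsilon$ and $\riem^P_*$ coincide near each $B_i$, so $V$ is simultaneously an open subset of $\Oqc(\riem^P_*)$ and of $\Oqc((\riem^P_*)^\epsilon)$ carrying the identical chart $T$; and unwinding the definitions shows that, for fixed $\epsilon$, $\phi\mapsto\Psi(\epsilon,\phi)$ is precisely the Corollary~\ref{co:OqcFBbiholo} parametrisation of $F_B(S(\epsilon))/\DB$ restricted to $V$ and lifted to $F_B$ — a lift existing after shrinking $V$ since $\DB$ acts properly discontinuously and fixed-point-freely (Proposition~\ref{DBaction}). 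Hence for each $\epsilon$, $\Psi(\epsilon,\cdot)$ is a biholomorphism of $V$ onto an open subset of the fibre $F_B(S(\epsilon))$.

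For holomorphy of $\Psi$ I would pass to Beltrami differentials: holomorphic local sections of the fundamental projection $\Phi$ together with Corollary~\ref{co:OqcFBbiholo} yield a holomorphic assignment $\phi\mapsto\mu_\phi:=\mu(f_\phi)\in L^\infty_{-1,1}(\riem^B)_1$, with representative $(\riem^B,f_\phi,\riem^P_*\setminus\phi(\disk_0^n))$. The Beltrami coefficient of $\nu^\epsilon$ is $\epsilon\,d\bar\zeta_i/d\zeta_i$ on $U_i$ and $0$ elsewhere: a differential supported away from $B$, linear in $\epsilon$, and independent of $\phi$. The coefficient of $(f_\phi)^\epsilon = \nu^\epsilon\circ f_\phi$ on $\riem^B$ is then obtained from $\mu_\phi$ and the pullback $f_\phi^*(\mu(\nu^\epsilon))$ by the standard composition formula; using the joint holomorphy established in the proof of Lemma~\ref{le:Lholo} and the fact that $\mu(\nu^\epsilon)$ enters linearly in $\epsilon$, this is holomorphic in $(\epsilon,\phi)$, so $\Psi = \Phi\circ\bigl((\epsilon,\phi)\mapsto\mu((f_\phi)^\epsilon)\bigr)$ is holomorphic.

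Finally, for bijectivity onto an open image: if $\Psi(\epsilon_1,\phi_1) = \Psi(\epsilon_2,\phi_2)$ then applying $\C$ gives $S(\epsilon_1) = S(\epsilon_2)$, hence $\epsilon_1 = \epsilon_2$ since $S$ is a biholomorphism onto an open subset of $T(\riem^P)$ (part~(2) of Theorem~\ref{SchifferTheorem}), and then $\phi_1 = \phi_2$ by the fibrewise injectivity above; so $\Psi$ is injective. For the derivative, $d\C\circ d\Psi = dS\circ\operatorname{pr}_1$ shows that $d\Psi$ is injective on the $\epsilon$-directions and carries them, via $d\C$, isomorphically onto the full tangent space of the $(3g-3+n)$-dimensional $T(\riem^P)$, while the $\phi$-directions are mapped isomorphically onto $\ker d\C$, the tangent space of $F_B(S(\epsilon))$ (using that $\C$ is a submersion, Corollary~\ref{co:TBfiberspace}, and that $\Psi(\epsilon,\cdot)$ is a fibrewise biholomorphism). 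Since the image of the $\epsilon$-directions is transverse to $\ker d\C$ by the transversality observation preceding the theorem, $d\Psi$ is a continuous linear isomorphism at every point. An injective holomorphic map between Banach manifolds with everywhere invertible derivative is a biholomorphism onto its open image, which completes the proof. I expect the main obstacle to be the bookkeeping behind the joint holomorphy of $\Psi$ — equivalently, that the Schiffer section of $\C$ depends holomorphically on its base point in $F_B$: all the ingredients appear in the proof of Lemma~\ref{le:Lholo}, but assembling the composed-Beltrami estimate cleanly is the delicate point.
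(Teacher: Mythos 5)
Your route differs from the paper's in a genuinely different decomposition, and in one step has a gap. The paper pushes the problem into the rigged Teichm\"uller space via $\PDB$, where the map becomes the explicit $G(\epsilon,\phi) = [\riem^P, \nu^\epsilon\circ f, \riem^P_\epsilon, \nu^\epsilon\circ\phi]$, proves \emph{separate} holomorphy — in $\epsilon$ from Theorem~\ref{th:Schiffer_section}, and in $\phi$ by factoring through $\Oqc(\riem^P_\epsilon)$ as left-composition by a biholomorphism followed by an appeal to Theorem~\ref{th:Lbiholomorphism} — and then invokes Hartog's theorem to get joint holomorphy. You instead keep everything in $T(\riem^B)$ and try to establish joint holomorphy directly through the Beltrami composition formula. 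That step does not go through as written: in local coordinates the coefficient of $\nu^\epsilon\circ f_\phi$ is
\[
  \frac{\mu_\phi + \bigl(\mu(\nu^\epsilon)\circ f_\phi\bigr)\,\overline{\partial f_\phi}/\partial f_\phi}{1 + \overline{\mu_\phi}\,\bigl(\mu(\nu^\epsilon)\circ f_\phi\bigr)\,\overline{\partial f_\phi}/\partial f_\phi},
\]
which contains $\overline{\mu_\phi}$ and the non-holomorphic quotient $\overline{\partial f_\phi}/\partial f_\phi$; it is therefore \emph{not} a holomorphic function of $\phi$ at the level of $L^\infty_{-1,1}(\riem^B)$, even though its image in $T(\riem^B)$ is. Promoting this formula to holomorphy of the image requires a change-of-base-point argument, not a direct estimate, and your citation of ``the joint holomorphy established in the proof of Lemma~\ref{le:Lholo}'' does not supply it: that lemma fixes the target surface (equivalently, $\epsilon$). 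You flag this as the delicate point yourself; the Hartog's-theorem route in the paper is precisely the clean way around it.

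The rest of your argument is sound, and in one place more precise than the paper's. The compatibility $\C\circ\Psi = S\circ\operatorname{pr}_1$, the injectivity it gives together with fibrewise injectivity, and the identification of $\Psi(\epsilon,\cdot)$ with the Corollary~\ref{co:OqcFBbiholo} parametrisation are all correct, modulo the care needed to identify $V$ inside both $\Oqc(\riem^P_*)$ and $\Oqc(\riem^P_\epsilon)$. Your tangent-space argument — that $d\Psi$ sends the $\phi$-directions isomorphically onto $\ker d\C$ and the $\epsilon$-directions onto a finite-dimensional transverse complement, so that $d\Psi$ is a Banach-space isomorphism — is exactly what the inverse function theorem requires. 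The paper asserts only that the derivative of $G$ is ``injective,'' which by itself is insufficient in infinite dimensions; your version supplies the needed surjectivity and splitting.
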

\begin{proof}
Because $\PDB$ is a local biholomorphism, it is sufficient
that
\begin{align*}
G:(\Omega \times V) & \to \widetilde{T}^P(\riem^P) \\
(\epsilon, \phi) & \mapsto [\riem^P, \nu^\epsilon \circ
f,\riem^P_\epsilon, \nu^{\epsilon} \circ\phi]
\end{align*}
is a biholomorphism onto its image.
That $G$ is injective follows directly from the definition of $\widetilde{T}^P(\riem^P)$ and the facts: (1) $S : \Omega \to T(\riem^P)$ is injective, and (2) $\riem^P$ has no non-identity automorphisms that are homotopic to the identity since $2g-2+n>0$.  

First, we show that this map is holomorphic.  By Hartog's theorem
is enough to show that the map is separately holomorphic (see
\cite{Mujica} for a version of this theorem in infinite dimensions).
Fixing $\phi$, by Theorem \ref{th:Schiffer_section} the map
$\epsilon \mapsto [\riem^B, f_{\phi}^{\epsilon}, (\riem^P_*
\setminus \phi(\disk_0^n))^{\epsilon}]$ is holomorphic.
Thus since $\mathcal{P}$ is holomorphic, $G$ is holomorphic in
$\epsilon$. Now fix $\epsilon$, and consider the map corresponding
to the second component of $G$:
\begin{align*}
 H:V & \to \widetilde{T}^P(\riem^P) \\
\phi & \mapsto [\riem^P, \nu^\epsilon \circ f,\riem^P_\epsilon,
\nu^{\epsilon} \circ \phi].
\end{align*}
Now $H$ can be written as $H_2 \circ H_1$ where $H_1$ and $H_2$
are given by
\begin{align*}
 H_1:V & \to \Oqc(\riem_\epsilon^P) \\
\phi & \mapsto \nu^\epsilon \circ \phi
\end{align*}
and
\begin{align*}
 H_2: \Oqc(\riem_\epsilon^P) & \to \widetilde{T}^P(\riem^P) \\
\xi & \mapsto [\riem^P, \nu^\epsilon \circ f,\riem^P_\epsilon,
\xi].
\end{align*}
$H_2$ is holomorphic by Theorem \ref{th:Lbiholomorphism}, so it
remains to show that $H_1$ is holomorphic.

Let $\zeta$ be the collection
 of local biholomorphisms of neighborhoods of the punctures on $\riem^P_*$ corresponding to the chart $(T,V)$ on $\Oqc(\riem^P_*)$.  Let $(T_\epsilon,V_\epsilon)$ be the chart on  $\Oqc(\riem_\epsilon^P)$ and $\zeta_\epsilon$ be the corresponding local biholomorphism of $\riem_\epsilon^P$.
 We need to show that $T_\epsilon \circ H_1 \circ T^{-1}$  is holomorphic, i.e. that the map
 \[  \zeta \circ \phi \mapsto \zeta_\epsilon \circ \nu_\epsilon \circ  \phi = (\zeta_\epsilon \circ \nu_\epsilon \circ \zeta^{-1}) \circ (\zeta \circ \phi) \]
 is holomorphic on $\Oqc$.  Composition on the left by a biholomorphism is a local biholomorphism of $\Oqc$ by \cite[Lemma 3.10]{RSOqc}, which establishes the claim.

 Finally we need to show that $G^{-1}$ is holomorphic.  By Theorem \ref{th:Lbiholomorphism} and the fact that Schiffer variation provides a section of the fiber projection, it is clear that the derivative of $G$ is an injective linear map at each point for which $G$ is defined.  Since $G$ is holomorphic and in particular $C^1$, we can apply the inverse function theorem \cite{Lang} to show that $G$ has a $C^1$ inverse.  The derivative of $G^{-1}$ is also complex linear so
  $G^{-1}$ is holomorphic.
\end{proof}

\end{section}

\bibliographystyle{amsplain}

\end{document}